\documentclass[11pt]{article}

\usepackage[T1]{fontenc}
\usepackage{lmodern}
\usepackage[margin=1in]{geometry}
\usepackage{amsmath,amssymb,amsthm,mathtools}
\usepackage{enumitem}
\usepackage[hidelinks]{hyperref}
\usepackage{microtype}

\newtheorem{theorem}{Theorem}
\newtheorem{lemma}[theorem]{Lemma}
\newtheorem{proposition}[theorem]{Proposition}
\theoremstyle{remark}
\newtheorem{remark}[theorem]{Remark}
\newtheorem{corollary}[theorem]{Corollary}

\newcommand{\B}{\mathbb B}
\newcommand{\C}{\mathbb C}
\newcommand{\R}{\mathbb R}
\newcommand{\Z}{\mathbb Z}
\newcommand{\Q}{\mathbb Q}
\newcommand{\one}{\boldsymbol 1}
\newcommand{\dd}{\,\mathrm d}
\newcommand{\Ric}{\operatorname{Ric}}
\newcommand{\Rec}{\operatorname{recc}}
\newcommand{\Log}{\operatorname{Log}}
\newcommand{\Vol}{\operatorname{Vol}}

\date{}

\begin{document}

\title{Yau's conjecture on smooth Reinhardt domains}

\author{Yuan Yuan \medskip \\
Institute for Theoretical Sciences, Westlake University,\\
Hangzhou 310024, Zhejiang, China\\
\texttt{yuanyuan@westlake.edu.cn}}

\maketitle

\begin{abstract}
We prove that a possibly unbounded Reinhardt domain in $\mathbb C^n$
with smooth boundary and a complete Bergman-Einstein  metric is
biholomorphic to the complex unit ball.  In particular, no unbounded
smooth Reinhardt domain admits a complete Bergman-Einstein  metric.
\end{abstract}

\section{Introduction}

The Bergman metric is one of the principal biholomorphic invariants of a
complex domain.  It is constructed directly from square-integrable
holomorphic functions and therefore connects complex analysis with
K\"ahler geometry.  On the unit ball, the Bergman metric is complete and
K\"ahler--Einstein.  This naturally leads to the rigidity question of
whether the Einstein condition characterizes the ball, or at least a
homogeneous domain, among suitable pseudoconvex domains.

Several forms of this question have appeared in the literature.
Lu's uniformization theorem \cite{L66} shows that a domain carrying a
complete Bergman metric of constant holomorphic sectional curvature is
biholomorphic to the ball.  In 1979, Cheng \cite{C79} asked whether the
weaker K\"ahler--Einstein condition already characterizes the ball among
smoothly bounded strongly pseudoconvex domains.  Yau \cite{Y82}
subsequently placed this problem in the broader setting of bounded
pseudoconvex domains and proposed that a complete Bergman-Einstein
metric should force the domain to be homogeneous.  These questions are
usually referred to as Cheng's conjecture and  Yau's
conjecture.

The first substantial progress used boundary asymptotics of the
Bergman kernel.  Fu and Wong \cite{FW97} related the Einstein equation
to the vanishing of the logarithmic obstruction in the Bergman
kernel expansion. Cheng’s conjecture in the two dimensional strongly pseudoconvex domain is proved by Fu-Wong \cite{FW97} and Nemirovski-Shafikov  \cite{NS06}.  
Huang and Xiao
\cite{HX20,HX21} later resolved Cheng's conjecture in all dimensions.
The same circle of ideas was subsequently developed for Stein spaces,
spherical boundaries, and finite ball quotients (cf. \cite{EXX22,HL23,GS26,GSa26}).  Beyond strong pseudoconvexity,
Savale and Xiao \cite{SX25} treated smooth finite-type domains in
complex dimension two.  Related rigidity and nonexistence phenomena
for nonsmooth boundary models were established in \cite{HJL25}.
For Reinhardt domains, the interaction between K\"ahler--Einstein
metrics and logarithmic coordinates had already been investigated by
Isaev \cite{I95} and Yau's conjecture is shown to be true on Reinhardt domains of finite type in $\mathbb{C}^2$ \cite{FW97}.

The decisive breakthrough toward Yau's conjecture was achieved and Yau's conjecture in all dimensions for bounded pseudoconvex domains with real analytic boundary
 was solved by
Hsiao, Huang, and Li \cite{HHL26}. Their deep paper 
\cite{HHL26} provides the fundamental
analytic input for the present paper.  Their localization theorem makes
precise boundary analysis of the Bergman kernel available near
strongly pseudoconvex boundary points even when the ambient domain is unbounded.  Two
consequences are especially important here.  First, if a possibly
unbounded pseudoconvex domain has a Bergman-Einstein metric, then its
boundary is locally spherical near every smooth strongly pseudoconvex
point.  Second, the Bergman invariant function
$ J_\Omega$ 
assumes the universal value
$ J_\Omega=\frac{(n+1)^n\pi^n}{n!}$
as soon as such a boundary point exists.  In particular, the Einstein
constant is normalized by
\(\operatorname{Ric}(\omega_\Omega)=-\omega_\Omega\).
Their global theorem for bounded pseudoconvex domains with
real-analytic boundary is also used in the bounded case.
In fact, the work of Hsiao-Huang-Li  \cite{HHL26} supplies the passage from local boundary geometry to
Bergman rigidity, including in the unbounded setting.  The purpose of
the present paper is to combine this analytic principle with the
convex and arithmetic structure imposed by torus symmetry.  Our
main result is as follows.

\begin{theorem}\label{thm:main}
Let $\Omega\subset\C^n$ be a (possibly unbounded) Reinhardt domain with
$C^\infty$-smooth boundary.  If the Bergman metric of $\Omega$ is well
defined and is a complete Einstein metric, then $\Omega$ is biholomorphic
to the complex unit ball $\B^n$.
\end{theorem}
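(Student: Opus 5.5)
Our strategy is to reduce to the bounded case, where the theorem of Hsiao--Huang--Li applies, by using the convex and arithmetic structure of the logarithmic image. Write $\Omega^* = \Omega\cap(\C^*)^n$ and $D=\Log(\Omega^*)\subset\R^n$. Since the Bergman metric is well defined, $\Omega$ is pseudoconvex, so $D$ is convex. If $\Omega$ meets a coordinate hyperplane $\{z_j=0\}$, then $D$ is invariant under $x_j\mapsto x_j-t$ for $t\ge0$.

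\textbf{Step 1 (strongly pseudoconvex points).} We first show that $\partial\Omega$ has a smooth strongly pseudoconvex point. If $D$ is not all of $\R^n$ (it cannot be, since then $\Omega\supset(\C^*)^n$ and there would be no nonzero $L^2$ holomorphic functions), then $\partial D$ is a nonempty convex hypersurface. Any boundary point of $\Omega^*$ in $(\C^*)^n$ lies over $\partial D$. A smooth convex hypersurface has points of positive Gauss curvature if it is not ruled in some direction; otherwise $D$ contains a line. If $D$ contains a line in direction $v$, then the $\R$-action $z\mapsto e^{tv}z$, together with the torus action, gives a $\C^*$-action on $\Omega$ whose orbits are, generically, punctured lines or $\C^*$. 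Bergman completeness (indeed, the existence of the Bergman metric) excludes this. For irrational $v$ we take the closure of the orbit, which is a subtorus, and get a $(\C^*)^k$-action. So $D$ contains no line, and $\partial D$ has points of positive definite second fundamental form. These correspond to strongly pseudoconvex points of $\partial\Omega$, since the Levi form in log coordinates is the real Hessian of the defining function of $D$.

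\textbf{Step 2 (local sphericity).} By the localization theorem of \cite{HHL26}, $\partial\Omega$ is locally spherical near every smooth strongly pseudoconvex point, and $J_\Omega$ takes the universal value. For Reinhardt hypersurfaces, local CR sphericity near a point where the torus acts freely forces the corresponding piece of $\partial D$ to be, after an affine change, the log-image of the sphere $\sum_j a_j e^{2x_j}=1$ restricted to the relevant sign pattern. This is Isaev-type rigidity: spherical Reinhardt hypersurfaces are classified, and they are images of the sphere under monomial maps $z\mapsto z^A$, possibly composed with the generalized Cayley transform. By real-analyticity of that model, the set of strongly pseudoconvex points is open and dense in $\partial D$, and $\partial D$ is piecewise given by such models. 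Smoothness and convexity glue these pieces into a single model hypersurface.

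\textbf{Step 3 (arithmetic step).} The model is $\sum_j a_j e^{2\langle m_j,x\rangle}=1$, with exponent vectors $m_j$ forming a matrix $A$. Smoothness of $\partial\Omega$ across the coordinate hyperplanes, and the fact that $\Omega$ is a genuine domain in $\C^n$, force $A$ to be unimodular with nonnegative entries, hence a permutation matrix. The unbounded alternatives are the Siegel/Cayley-type models $\{\operatorname{Im}\text{-like}\}$ in log form, e.g.\ $|z_n|^2>\dots$, and products with $\C^*$ factors. These are excluded either by non-smoothness of the closure at the coordinate hyperplanes, or because $\Omega$ would contain $\C^*$-orbits, violating Step 1. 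Hence $\Omega$ is a bounded ellipsoid $\sum a_j|z_j|^2<1$ up to scaling, which is biholomorphic to $\B^n$. Alternatively, once $\Omega$ is shown to be bounded with real-analytic boundary, we can invoke the global theorem of \cite{HHL26} directly.

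\textbf{Main obstacle.} The hard part is Step 2--3: passing from local sphericity on the open set of strongly pseudoconvex points to a global description. This requires controlling weakly pseudoconvex points of $\partial D$, where the Hessian degenerates. My first attempt would be to exploit analytic continuation of the spherical model along $\partial D$, together with convexity. The aim is to show that the degenerate set has empty interior and cannot separate two different model pieces, using the explicit form of the Levi-degenerate locus of the models.
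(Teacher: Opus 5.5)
Your outline has genuine gaps in Steps 2 and 3, beyond the gluing problem you flag yourself.

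\textbf{Step 2: the local model is wrong.} Local sphericity of the tube $S+i\R^n$ does not force $S$ to be affinely the log-image of the sphere. Strongly pseudoconvex spherical tubes fall into the $n+2$ Dadok--Yang types. Besides $\sum_\alpha e^{x_\alpha}=1$ (Type III), there are $x_0=\sum_{\alpha\le m}e^{x_\alpha}+\sum_{\alpha>m}x_\alpha^2$ (Type I) and $\sin x_0=\sum_{\alpha\ge1}e^{x_\alpha}$ (Type II). These are not monomial or Cayley images of the sphere.

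Type II genuinely occurs. The domain $\{1<|z_0|<e^{\pi},\ |z_1|^2<\sin\log|z_0|\}\subset\C^2$ is a bounded pseudoconvex Reinhardt domain with real-analytic boundary, spherical wherever $z_1\neq0$. Yet it retracts onto an annulus, so it is not a ball. No local analysis can therefore reduce you to the sphere. The paper handles this type by showing that smoothness at the orbits $\{z'=0\}$ forces even integer exponents, hence a real-analytic boundary, and then applies the global theorem of Hsiao--Huang--Li.

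The local-to-global passage you leave open is done with Isaev's extension theorem. A connected locally spherical tube $C+i\R^n$ embeds in a closed real-analytic tube hypersurface whose Levi form has constant signature. An open--closed argument on $S=\partial L$, which is connected because $L$ is line-free, then shows all of $M$ is strongly pseudoconvex and spherical, so the classification applies globally.

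\textbf{Step 3: the bounded case.} Even for Type III, smoothness does not force a permutation matrix. The eggs $\{\sum_j|z_j|^{2m_j}<1\}$ are smooth, have Type III log-image with exponent matrix $\operatorname{diag}(m_j)$, and are not balls if some $m_j\ge2$. What singles out the ellipsoid in the paper is the universal value $J_\Omega\equiv(n+1)^n\pi^n/n!$, which you quote but never use. If $0\in\Omega$, then $J_\Omega(0)=\Vol(\Omega)^{n+1}/\prod_j\int_\Omega|z_j|^2\,dV$, and a sharp moment inequality with its equality case forces an ellipsoid.

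\textbf{Step 3: the unbounded case.} More seriously, your reasons do not exclude the unbounded models. Since $L$ is line-free, $\Omega$ contains no $\C^*$-orbits, only half-orbits $t\mapsto e^{tv}z$ with $t\ge0$ and $v\in\Rec(L)$. Smooth unbounded realizations also exist. The map $(z_1,z_2)\mapsto(z_1/z_2,1/z_2)$ sends $\{|z_2|^2>1+|z_1|^2\}$ biholomorphically onto $\B^2\setminus\{w_2=0\}$. So this unbounded Reinhardt domain has smooth spherical boundary, Type III log-image and an Einstein Bergman metric; only completeness fails. Falling back on the real-analytic theorem presupposes boundedness.

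You are missing two ingredients.
\begin{enumerate}[label=(\roman*)]
\item An incompleteness criterion. Suppose $v\in\Rec(L)\cap\Q^n$ has a positive entry. Let $\mathcal P$ be the set of exponents of $L^2$ Laurent monomials. Then the integers $\alpha\cdot v$, $\alpha\in\mathcal P$, are bounded above. Hence $\frac{d^2}{dt^2}\log K_\Omega$ along $y_0+tv$ is $O(e^{-t})$, and the ray has finite Bergman length while leaving every compact set.
\item Spectral rigidity. Let $y\mapsto By+d$ be an affine automorphism of $L$ with $Be_j=e_j$ whenever $\Omega\cap\{z_j=0\}\neq\varnothing$. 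It lifts to an isometry of the pulled-back complete K\"ahler--Einstein metric (Cheng--Yau uniqueness), so $B^T(\mathcal P-\mathcal P)=\mathcal P-\mathcal P$.
\end{enumerate}
These settle the three types as follows.
\begin{itemize}
\item Type I: the model dilations $\delta_s$ satisfy the hypothesis of (ii) once $0\notin\Omega$ is known from the origin case. Discreteness then gives $B_s=I$, a contradiction.
\item Type II: the reflection $x_0\mapsto\pi-x_0$ makes $B$ rational. So $\operatorname{span}\Rec(L)=\ker(B-I)$ is a rational subspace, and (i) applies.
\item Type III: $\Rec(L)$ is full-dimensional, and (i) applies directly.
\end{itemize}

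\textbf{Smaller points.} The case $n=1$ needs Lu's theorem separately. In Step 1, the clean justifications are these: a line in $L$ makes every Laurent monomial non-$L^2$; and for line-free $L$ the Gauss map of $\partial L$ has open image, so Sard gives a point of positive definite second fundamental form.
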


We always write
$ K_\Omega(z)=K_\Omega(z,z)$ for the diagonal Bergman kernel on $\Omega$,  $\omega_\Omega=\sqrt{-1}\partial \bar\partial \log K_\Omega$ for the Bergman metric and $ J_\Omega =  \frac{\det\bigl(
 \partial_j\partial_{\bar k}\log K_\Omega
 \bigr)} {K_\Omega}$ for the Bergman invariant function. 
The Bergman metric is called well defined when
\(K_\Omega(z,z)>0\) and the Hermitian matrix
$ \left( \partial_j\partial_{\bar k}\log K_\Omega(z,z) \right)_{j,k=1}^n$
is positive definite throughout \(\Omega\). 

We briefly describe the proof.  Completeness of the Bergman metric
implies pseudoconvexity, and hence the logarithmic image
$ L= \left\{ (\log|z_1|,\ldots,\log|z_n|): z\in\Omega\cap(\C^*)^n \right\}$
is convex.  We first use the geometry of \(L_\Omega\) to produce a
strongly pseudoconvex boundary point whose coordinates are all
nonzero.  The exponential covering transfers this point to the tube
hypersurface
$ \partial L_\Omega+i\R^n.$
The local sphericality theorem of Hsiao--Huang--Li then yields a
spherical open piece.  Results of Isaev \cite{I93,I11} extend this
local information across the connected logarithmic boundary.  The
resulting strongly pseudoconvex spherical tube  therefore belongs to one of the
three types in the Dadok--Yang classification \cite{DY85}.

Two additional arguments are needed to turn this classification into
a classification of the original Reinhardt domain.  If
\(0\in\Omega\), the universal identity for \(J_\Omega\), together with
a sharp moment inequality, forces \(\Omega\) to be an 
ellipsoid.  For bounded domains, the remaining Dadok--Yang models are
eliminated or reduced to the real-analytic theorem of
Hsiao--Huang--Li.  
For unbounded domains, we combine torus symmetry with the geometry
of the logarithmic image and the structure of the Bergman kernel to
show that none of the three models can occur.
The one-dimensional case follows from Lu's theorem and the
elementary classification of Reinhardt domains in \(\C\).

\medskip

We now prove Theorem \ref{thm:main} assuming Theorem \ref{bounded} and Theorem \ref{unbounded}. These two theorems are proved in later sections. 

\begin{proof}[Proof of Theorem \ref{thm:main}]
By Theorem \ref{bounded} and Theorem \ref{unbounded}, it suffices to consider the one-dimensional case.  If $\Omega$ is bounded, the Bergman-Einstein metric has
constant curvature.  Lu's theorem \cite{L66} shows that a bounded domain
with complete Bergman metric of constant curvature is biholomorphic to
the unit disk.
Suppose that $\Omega$ is unbounded.  Then $\Omega$ can only be $\C$ or $ \Omega_R=\{z\in\C:|z|>R\}
$ with $R \geq 0$. It is easy to verify that either the Bergman metric does not exist or the Bergman metric is incomplete. 
\end{proof}

The paper is organized accordingly.  After recalling the logarithmic
geometry of pseudoconvex Reinhardt domains and the Dadok--Yang
classification, we prove the sharp moment inequality and treat domains
containing the origin.  The bounded and unbounded cases are then
considered separately.

\section{Preliminaries}

We first collect some standard results on Reinhardt domains; the reader may
refer to \cite{JP08} for more detailed discussions. 

Let $\Omega \subset \C^n$ be a  Reinhardt domain. 
Let
\[
 L=\Log \Omega
 =\{(\log|z_1|,\dots,\log|z_n|):
               z\in\Omega\cap(\C^*)^n\}
\]
be the logarithmic image of $\Omega$.
When $\Omega$ is pseudoconvex, $L$ is a geometrically convex open set in $\R^n$ and relatively complete (cf. Theorem 1.11.13 in \cite{JP08}).
Let $S=\partial L$ and $M:=S+i\R^n$ be a tube over $S$.  The exponential map
\[
\operatorname{Exp}\colon \mathbb C^n\rightarrow(\mathbb C^*)^n,
\quad
\operatorname{Exp}(w)
   =(e^{w_1},\ldots,e^{w_n}),
\]
is a local biholomorphism and satisfies $\operatorname{Exp}^{-1} \bigl(\partial \Omega \cap(\mathbb C^*)^n\bigr) =M.$ 
The pull back of a torus-invariant smooth defining function for
\(\partial \Omega\) is a defining function of the form
\(r(\operatorname{Re}w)\), with \(dr\neq0\) on \(S\). 

The following lemma is well-known to experts (cf. \cite{FIK96a, IK98}).
In fact, it suffices to assume that
$\Omega$ is a $C^1$-smooth Reinhardt domain.

\begin{lemma}\label{lem:origin-not-boundary}
Let $\Omega \subset\C^n$ be a Reinhardt domain with $C^1$-smooth boundary.  Then
$0\notin\partial \Omega$.
\end{lemma}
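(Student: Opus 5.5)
We argue by contradiction: suppose $0\in\partial\Omega$, so near $0$ the boundary is a $C^1$ hypersurface with a well-defined unit normal $\nu\in\C^n$ at $0$ and tangent real hyperplane $T_0=\{v:\operatorname{Re}\langle v,\nu\rangle=0\}$. The strategy is to show that torus invariance forces $T_0$ to be invariant under the full torus action $v\mapsto(e^{i\theta_1}v_1,\dots,e^{i\theta_n}v_n)$. This is impossible for a real hyperplane through the origin, because the only torus-invariant real linear subspaces are sums of coordinate lines $\C e_j$, and these have even real dimension, whereas $T_0$ has real dimension $2n-1$.

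\textbf{Step 1 (the tangent hyperplane is torus-invariant).} Each rotation $R_\theta(z)=(e^{i\theta_1}z_1,\dots,e^{i\theta_n}z_n)$ is a complex-linear unitary map with $R_\theta(\Omega)=\Omega$, hence $R_\theta(\partial\Omega)=\partial\Omega$. It fixes $0$ and maps $\partial\Omega$ near $0$ onto itself, so its differential, namely $R_\theta$ itself, maps $T_0\partial\Omega$ onto $T_0\partial\Omega$. Since $\partial\Omega$ is only $C^1$, I would verify this via the characterization $T_0=\{\gamma'(0):\gamma \text{ a } C^1 \text{ curve in } \partial\Omega,\ \gamma(0)=0\}$; applying $R_\theta$ to such curves gives $R_\theta T_0\subset T_0$, with equality by dimension. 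Equivalently, $R_\theta\nu=\pm\nu$ for all $\theta$, and continuity in $\theta$ upgrades this to $R_\theta\nu=\nu$.

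\textbf{Step 2 (contradiction).} The condition $R_\theta\nu=\nu$ for all $\theta$ means $e^{i\theta_j}\nu_j=\nu_j$ for every $j$ and every $\theta_j$. This forces $\nu_j=0$ for all $j$, contradicting $|\nu|=1$. Hence $0\notin\partial\Omega$.

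The main point requiring care is Step 1 at merely $C^1$ regularity: one must make sure that "tangent hyperplane" is intrinsic and preserved by the symmetry. The cleanest route is to write $\partial\Omega$ near $0$ as a $C^1$ graph over $T_0$ and use the curve characterization above; both steps are then linear algebra. An alternative, if one prefers defining functions, is to take a local $C^1$ defining function $\rho$ with $d\rho(0)\neq0$. Then $\rho\circ R_\theta$ is also a local defining function, so $d\rho(0)\circ R_\theta=c(\theta)\,d\rho(0)$ with $c(\theta)>0$ (because the rotation preserves the side on which $\Omega$ lies). Averaging over the torus, or simply using compactness, gives $c\equiv1$. The contradiction then follows exactly as in Step 2.
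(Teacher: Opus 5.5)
Your proof is correct and is essentially the paper's argument. The paper takes the torus-invariant signed distance $\rho$ to $\partial\Omega$ and observes that $d\rho(0)$ would be a nonzero real covector fixed by every coordinatewise rotation, which is impossible. You reach the same contradiction through the torus-invariant tangent hyperplane and unit normal at $0$, or through a general defining function with the factor $c(\theta)$ normalized to $1$; this is slightly more careful at $C^1$ regularity, since it does not rely on a tubular-neighborhood description of the signed distance.
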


\begin{proof}
In a tubular neighborhood of the boundary, let $\rho$ be the signed
Euclidean distance to $\partial \Omega$.  Since $\Omega$ is Reinhardt, $\rho$ is
invariant under the torus action of $(S^1)^n$.  If
$0\in\partial \Omega$, then $d\rho(0)$ is fixed by every coordinatewise
rotation.  The only real covector at the origin fixed by this action is
the zero covector.  Thus $d\rho(0)=0$, contradicting the fact that $\rho$
is a defining function.
\end{proof}

We also record the following standard consequence of the description of
square-integrable Laurent monomials on pseudoconvex Reinhardt domains
(cf.  Lemma 5 in \cite{Z99}, \cite{J12}).

\begin{lemma}\label{lem:no-line}
Let $\Omega$ be a pseudoconvex Reinhardt domain. 
If $L$ contains an affine line, then the Bergman space $A^2(\Omega)$ is trivial.
\end{lemma}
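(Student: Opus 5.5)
The approach is to show that every $f\in A^2(\Omega)$ vanishes by testing its Laurent coefficients against the line contained in $L$. Since $\Omega$ is Reinhardt, each $f\in A^2(\Omega)$ has a Laurent expansion $f=\sum_{\alpha}a_\alpha z^\alpha$ on $\Omega\cap(\C^*)^n$, converging locally uniformly. The monomials are mutually orthogonal in $L^2(\Omega)$ by torus invariance. Integrating $|f|^2$ over the torus orbits gives the Parseval identity
\[
\|f\|^2_{L^2(\Omega)}=\sum_\alpha |a_\alpha|^2\,\|z^\alpha\|^2_{L^2(\Omega)} .
\]
Hence $a_\alpha\neq0$ forces $z^\alpha\in L^2(\Omega)$, and it suffices to show that no Laurent monomial is square integrable on $\Omega$. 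The coordinate hyperplanes have measure zero, so we may work on $\Omega\cap(\C^*)^n$ only.

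Next we pass to logarithmic coordinates $x_j=\log|z_j|$. Polar coordinates give
\[
\int_\Omega |z^\alpha|^2\,dV=(2\pi)^n\int_L e^{2\langle \alpha+\one,x\rangle}\,dx ,
\]
so the task becomes showing that $\int_L e^{2\langle\beta,x\rangle}dx=\infty$ for every $\beta=\alpha+\one\in\Z^n$. Suppose $L$ contains the line $\{p+tv:t\in\R\}$ with $v\neq0$. Because $L$ is open and convex, it contains a small ball $B(p,\varepsilon)$, and by convexity it contains the convex hull of this ball and the line. That hull is the full cylinder $C=\{q+tv: q\in B(p,\varepsilon)\cap v^\perp,\ t\in\R\}$. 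To see this, take any point $q+tv$ in $C$ and any $\lambda\in(0,1)$. The point $q+tv$ is a convex combination of the ball point $p+(q-p)/\lambda$ and a point $p+sv$ on the line far away, and this ball point lies in $B(p,\varepsilon)$ once $|q-p|<\lambda\varepsilon$. Letting $\lambda\to1$ covers all of $C$. This is the standard fact that the recession cone of an open convex set is the same at every point.

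On $C$ we have $\langle\beta,x\rangle=\langle\beta,q\rangle+t\langle\beta,v\rangle$. If $\langle\beta,v\rangle\neq0$, the integrand grows exponentially in one direction of $t$. If $\langle\beta,v\rangle=0$, it is bounded below by a positive constant along the infinite cylinder. In both cases $\int_C e^{2\langle\beta,x\rangle}dx=\infty$. So no monomial is square integrable, every $a_\alpha$ vanishes, and $f\equiv0$.

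The main technical point is the Laurent expansion with the Parseval identity on a Reinhardt domain that need not be complete and need not contain the origin. I would justify it by restricting $f$ to the torus orbits $\{(r_1e^{i\theta_1},\dots)\}$ and writing $a_\alpha$ as Fourier coefficients, which are independent of $r$ by the Cauchy–Riemann equations (the standard argument on each connected torus-invariant piece, using that $\Omega\cap(\C^*)^n$ is connected since $L$ is connected). The Parseval identity then follows from Fubini applied to the orbit integrals. Pseudoconvexity is used only to guarantee that $L$ is convex.
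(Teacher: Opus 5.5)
Your proof is correct and follows essentially the route the paper relies on. The paper gives no argument of its own and cites the description of square-integrable Laurent monomials on pseudoconvex Reinhardt domains (Lemma 5 in \cite{Z99}); your Laurent--Parseval reduction, followed by the divergence of $\int_L e^{2\langle\alpha+\one,x\rangle}\,dx$ over a cylinder around the line, is exactly the necessary-condition half of that description, and the same norm formula \eqref{eq:monomial-norm} and cylinder trick reappear in the paper's Lemma~\ref{lem:rational-ray}. One small notational slip: the cross-section of your cylinder should be $B(p,\varepsilon)\cap(p+v^\perp)$, not $B(p,\varepsilon)\cap v^\perp$.
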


The following fundamental classification result due to Dadok and Yang \cite{DY85} (cf. also
Theorem 5.1 in \cite{I11}) is crucial in our proof.

\begin{theorem}\label{DY}
Let $M=S+i\mathbb{R}^n\subset\mathbb{C}^n$, $ n\geq2$, 
be a connected closed strongly pseudoconvex spherical tube
hypersurface. Two tube hypersurfaces are called affinely equivalent
if their bases are related by a real affine transformation
$ x\mapsto Ax+b$ for some $ A\in\operatorname{GL}(n,\mathbb{R})$ and $ b\in\mathbb{R}^n.$
Write $x=(x_0,x_1,\ldots,x_{n-1})\in\mathbb{R}^n.$
Then \(M\) is affinely equivalent to exactly one of the tube
hypersurfaces whose bases are given by the following equations:
\begin{enumerate}[label=\textup{Type \Roman*:},leftmargin=5em]

\item
\[
x_0
=
\sum_{\alpha=1}^{m}e^{x_\alpha}
+
\sum_{\alpha=m+1}^{n-1}x_\alpha^2,
\quad
0\leq m\leq n-1.
\]

\item
\[
\sin x_0
=
\sum_{\alpha=1}^{n-1}e^{x_\alpha},
\quad
0<x_0<\pi.
\]

\item
\[
\sum_{\alpha=0}^{n-1}e^{x_\alpha}=1.
\]
\end{enumerate}
These  \(n+2\) tube hypersurfaces are mutually affine inequivalent.
\end{theorem}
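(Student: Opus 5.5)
This statement is the Dadok--Yang classification, which the paper quotes from \cite{DY85} (cf. Theorem 5.1 in \cite{I11}) rather than proves. What follows is therefore a sketch of how one would reprove it, along the lines of the standard route.

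\emph{Step 1: local model.} A tube $M=S+i\R^n$ is strongly pseudoconvex exactly when $S$ is locally strictly convex. Strong pseudoconvexity, sphericity and affine equivalence are all local and preserved by real affine maps of the base. So near a point of $S$ I would write $S$ as a graph
\[
x_0=f(x_1,\dots,x_{n-1}),\qquad \nabla f(0)=0,\quad D^2 f(0)=I .
\]

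\emph{Step 2: sphericity becomes a finite-dimensional algebra.} Every spherical hypersurface is locally CR-equivalent to the sphere, so its CR automorphism algebra is locally $\mathfrak{su}(n,1)$. The tube $M$ already carries the $n$ commuting translations $i\partial_{w_j}$, which give an abelian subalgebra $\mathfrak a\cong\R^n$. I would then classify the embeddings of $\mathfrak a$ into $\mathfrak{su}(n,1)$, up to conjugacy, whose orbits are totally real of the right type and hypersurface-transversal. Each conjugacy class decomposes into a part inside a maximal abelian subalgebra of a parabolic: Heisenberg translations, a hyperbolic (dilation) part, and an elliptic (rotation) part.

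Using Isaev's tube framework, conjugacy classes of such $\mathfrak a$ correspond to affine classes of tube realizations of the sphere. For each class, one computes the associated tube base by writing the sphere model and reading off the $\mathfrak a$-orbit quotient:
\begin{itemize}[leftmargin=2em]
\item Heisenberg translations give the quadratic terms $x_\alpha^2$ (the paraboloid).
\item Rotations give exponential terms $e^{x_\alpha}$, via $|z_\alpha|^2=e^{2\operatorname{Re}w_\alpha}$.
\item The remaining one-dimensional choice for the $x_0$-direction (parabolic, elliptic or hyperbolic type) produces the three types: $x_0$ linear in Type~I, $\sin x_0$ in Type~II, and the simplex-like constraint $\sum e^{x_\alpha}=1$ in Type~III.
\end{itemize}
This step is the main obstacle: it requires a careful normal-form analysis of maximal abelian subalgebras of $\mathfrak{su}(n,1)$ and of their real forms. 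Alternatively, one can solve the sphericity PDE (vanishing Chern--Moser tensor) for $f$ directly; the resulting equations are overdetermined, and one integrates them to the listed forms.

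\emph{Step 3: globalization.} Since $M$ is closed and connected, the local model extends along $S$ by real-analyticity and uniqueness of continuation of spherical structure, in the style of Isaev's extension results \cite{I93,I11}. The base $S$ is then an open piece of the analytic continuation of the model, and closedness forces it to be the entire maximal connected strictly convex component. This fixes, for instance, the range $0<x_0<\pi$ in Type~II.

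\emph{Step 4: mutual inequivalence.} This follows from affine invariants of the bases:
\begin{itemize}[leftmargin=2em]
\item the number $m$ of exponential directions, read off from the dimension of the asymptotic cone or recession cone;
\item boundedness of the $x_0$-projection, which is bounded in Type~II but not in Type~I;
\item the structure of the recession cone, which is a full orthant-type cone in Type~III.
\end{itemize}
Together these distinguish all $n+2$ cases: the $n$ Type~I cases $0\le m\le n-1$, plus Types~II and~III.
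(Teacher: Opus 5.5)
You are right that the paper does not prove this statement; it quotes it from Dadok--Yang (cf.\ Isaev's Theorem~5.1). To the extent your proposal is that citation, it matches the paper, so only your supplementary sketch needs assessing. Apart from leaving the whole classification in Step~2 undone, the sketch has two concrete defects.

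\emph{The Step~2 heuristic is miswired.} Your parenthetical pairs ``elliptic'' with $\sin x_0$ and ``hyperbolic'' with the simplex, but it is the other way round. Start from the Siegel model $\operatorname{Im}\zeta_0>\sum_\alpha|\zeta_\alpha|^2$ and substitute $\zeta_0=e^{iw_0}$, $\zeta_\alpha=e^{iw_0/2+w_\alpha}$. The domain becomes $\sin x_0>\sum_\alpha e^{2x_\alpha}$, and $w_0\mapsto w_0+it$ is the Siegel dilation. So Type~II comes from the Cartan subalgebra with one hyperbolic and $n-1$ elliptic directions. Type~III comes from the compact Cartan subalgebra, in which all $n$ directions, $x_0$ included, are rotations. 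That compact Cartan subalgebra lies in no proper parabolic subalgebra, because the compact part $\mathfrak u(n-1)$ of the parabolic has rank $n-1$. So your assertion that every conjugacy class sits inside an abelian subalgebra of a parabolic is false for Type~III. The correct first split is as follows:
\begin{itemize}
\item if every element of $\mathfrak a$ is semisimple, then $\mathfrak a$ is one of the two Cartan subalgebras, giving Types~II and~III;
\item if some element has a nonzero nilpotent part, then $\mathfrak a$ fixes the unique boundary fixed point of that nilpotent flow and so lies in its parabolic, and the case analysis there must produce exactly the Type~I family.
\end{itemize}

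\emph{Step~4 does not separate all $n+2$ models.} The recession cones are:
\begin{itemize}
\item $\operatorname{Rec}(L_{\mathrm I}^{(m)})=\{q_0\ge0,\ q_1,\dots,q_m\le0,\ q_{m+1}=\dots=q_{n-1}=0\}$, of dimension $m+1$;
\item $\operatorname{Rec}(L_{\mathrm{II}})=\{0\}\times\mathbb R_-^{n-1}$;
\item $\operatorname{Rec}(L_{\mathrm{III}})=-\mathbb R^n_{\ge0}$.
\end{itemize}
The dimension separates the Type~I models from one another and Type~II from Type~III. A slab criterion isolates Type~II, provided you state it invariantly as ``some nonconstant affine function is bounded on $L$'' rather than via the coordinate-dependent $x_0$-projection. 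But for $m=n-1$ the cone $\mathbb R_{\ge0}\times\mathbb R_{\le0}^{n-1}$ is again a full orthant, mapped onto $-\mathbb R^n_{\ge0}$ by $\operatorname{diag}(-1,1,\dots,1)$. Moreover, neither $L_{\mathrm I}^{(n-1)}$ nor $L_{\mathrm{III}}$ lies in a slab: an affine function bounded on a convex set is constant along its recession directions, and both cones are full-dimensional. So your invariants cannot tell Type~III from Type~I with $m=n-1$.

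Two working fixes:
\begin{itemize}
\item $L_{\mathrm{III}}$ lies in a translate of its recession cone (indeed in $-\mathbb R^n_{>0}$). No translate $\{x_0\ge p_0,\ x_\alpha\le p_\alpha\}$ of $\operatorname{Rec}(L_{\mathrm I}^{(n-1)})$ contains $L_{\mathrm I}^{(n-1)}$, since $x_1$ is unbounded above there. This property is affinely invariant.
\item $L_{\mathrm I}^{(n-1)}$ carries the one-parameter group of affine automorphisms $(x_0,\dots,x_{n-1})\mapsto(s^2x_0,\,x_1+2\log s,\dots,x_{n-1}+2\log s)$. The affine automorphisms of $L_{\mathrm{III}}$ are only the coordinate permutations.
\end{itemize}
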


\begin{remark}\label{DY2}
In the setting \(S=\partial L\), where \(L\subset\mathbb{R}^n\) is
convex, the corresponding convex components are as follows:
\begin{align*}
\textup{Type I:}\quad
L_{\mathrm I}^{(m)}
&=
\left\{
x\in\mathbb{R}^n:
x_0>
\sum_{\alpha=1}^{m}e^{x_\alpha}
+
\sum_{\alpha=m+1}^{n-1}x_\alpha^2
\right\},
\\[4pt]
\textup{Type II:}\quad
L_{\mathrm{II}}
&=
\left\{
x\in\mathbb{R}^n:
0<x_0<\pi,\quad
\sum_{\alpha=1}^{n-1}e^{x_\alpha}<\sin x_0
\right\},
\\[4pt]
\textup{Type III:}\quad
L_{\mathrm{III}}
&=
\left\{
x\in\mathbb{R}^n:
\sum_{\alpha=0}^{n-1}e^{x_\alpha}<1
\right\}.
\end{align*}
\end{remark}

\bigskip

The recession cone $\operatorname{recc}(C)$ of a nonempty convex set
$C\subset\R^n$ is given by
\[
\operatorname{recc}(C)
=
\left\{
d \in \mathbb{R}^n
\;\middle|\;
x + td \in C
\quad \text{for all } x \in C,\; t \ge 0
\right\},
\]
and $d$ is called a recession direction. 
If \(C\) is also closed, it suffices to check the condition for any single \(x_0\in C\). Namely, 
\[
\operatorname{recc}(C)
=
\left\{d:x_0+t d\in C\ \text{for all }t\ge0\right\}.
\]
The recession cone is always a convex cone and contains \(0\). A nonempty
closed convex set is bounded exactly when
$ \operatorname{recc}(C)=\{0\}.$

\begin{lemma}\label{lem:recession}
Let  $\Omega\subset\C^n$ be a pseudoconvex Reinhardt domain with $C^1$-smooth boundary. 
Let $ I=\{j:\Omega \cap\{z_j=0\}\neq\varnothing\}.$ Denote $e_j = (0,\ldots,0,\underbrace{1}_{j\text{-th coordinate}},0,\ldots,0)$. 
Then  
\begin{equation}\label{eq:recession1}
\operatorname{cone}\{-e_j:j\in I\} \subset  \Rec(L).
\end{equation}
Moreover, if $\Omega$ is bounded, then 
\begin{equation}\label{eq:recession}
 \Rec(L)=\operatorname{cone}\{-e_j:j\in I\}.
\end{equation}
\end{lemma}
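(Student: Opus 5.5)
Both inclusions come from the Reinhardt symmetry together with the convexity of $L$. For the first inclusion \eqref{eq:recession1}, note that $\Rec(L)$ is a convex cone (here I use the recession cone of the closure $\overline L$, or equivalently of the open convex set $L$, which has the same recession cone). It therefore suffices to show $-e_j\in\Rec(L)$ for each $j\in I$. Fix $j\in I$ and a point $p\in\Omega$ with $p_j=0$. Since $\Omega$ is open and Reinhardt, it contains a polydisc-type neighborhood of $p$. By moving slightly off the coordinate hyperplanes in the other coordinates, we find $q\in\Omega\cap(\C^*)^n$ such that the whole disc $\{(q_1,\dots,\zeta,\dots,q_n):|\zeta|\le |q_j|\}$ lies in $\Omega$. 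Concretely, take $q$ close to $p$ with $q_k\neq0$ for all $k$; then the closed disc of radius $|q_j|$ in the $j$-th slot lies in the neighborhood, because $|q_j|$ is small. Hence $x=\Log q\in L$ and $x-te_j\in L$ for all $t\ge0$. Now use convexity. For any $y\in L$ and $t\ge0$, the points $y+s(x-te_j-y)$ lie in $L$ for $s\in[0,1]$. Letting $t\to\infty$ with $s=\tau/t$ shows that $y-\tau e_j$ lies in $\overline L$ for all $\tau\ge0$. Since $L$ is open and convex, it follows that $y-\tau e_j\in L$, being the limit of interior segments toward a half-line. The standard fact that the recession cone of a convex set is determined by a single ray in the closed case handles this cleanly via $\overline L$.

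For the reverse inclusion when $\Omega$ is bounded, let $d\in\Rec(L)$, $d\neq0$. Boundedness of $\Omega$ gives $\log|z_k|\le C$, so $L$ is bounded above in every coordinate, which forces $d_k\le0$ for all $k$. Let $J=\{k:d_k<0\}$. The claim is $J\subset I$, after which $d=\sum_{k\in J}d_k e_k\in\operatorname{cone}\{-e_k:k\in I\}$. Take $x\in L$ and follow $x+td$. The corresponding points $z(t)$, taking positive real coordinates $e^{x_k+td_k}$, lie in $\Omega$ and converge as $t\to\infty$ to the point $z^*$ with $z^*_k=e^{x_k}$ for $k\notin J$ and $z^*_k=0$ for $k\in J$. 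Thus $z^*\in\overline\Omega$, and we must show $z^*\in\Omega$; then $z^*\in\Omega\cap\{z_k=0\}$ gives $k\in I$ for each $k\in J$.

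The main obstacle is excluding $z^*\in\partial\Omega$. Here I will use the $C^1$ boundary, in the spirit of Lemma~\ref{lem:origin-not-boundary}. Suppose $z^*\in\partial\Omega$. Consider the subtorus rotating the coordinates in $J$; it fixes $z^*$. Applying the argument of Lemma~\ref{lem:origin-not-boundary} to a torus-invariant defining function $\rho$, we get that $d\rho(z^*)$ is invariant under rotations in the $z_k$-directions for $k\in J$, so it vanishes on those directions. Hence the normal to $\partial\Omega$ at $z^*$ lies in the span of the directions $k\notin J$. Moreover, by Reinhardt invariance, the normal is orthogonal to the angular directions in those coordinates, so it is a nonzero real radial covector $\sum_{k\notin J}a_k\,d|z_k|$. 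On the other hand, the whole set $\{z:z_k=z^*_k\ (k\notin J),\ |z_k|\le e^{x_k+td_k}\ (k\in J)\}$ approaches $z^*$ from inside. I instead exploit the fact that $x+td+s\,d'\in L$ for small perturbations, which are open. Points $x'+td$ with $x'$ near $x$ give limit points $z'$ with $|z'_k|=e^{x'_k}$, $k\notin J$. These fill a full neighborhood of $z^*$ within $\{z_k=0,\ k\in J\}$ in $\overline\Omega$, and the radial covector must vanish on this family, since $\rho\le0$ on it with $\rho(z^*)=0$ and $z^*$ an interior point of the family. This forces $a_k=0$, which is a contradiction. Therefore $z^*\in\Omega$, completing the proof.
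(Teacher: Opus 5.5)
Your proof is correct, but it reaches both inclusions by a somewhat different route from the paper.

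For \eqref{eq:recession1}, the paper invokes relative completeness of pseudoconvex Reinhardt domains (Theorem 1.11.13 in \cite{JP08}). That gives $x-te_j\in L$ for every $x\in L$ at once. You instead build a single ray $x-te_j\subset L$ from a point $q\in(\C^*)^n$ near $\Omega\cap\{z_j=0\}$. You then transport it to every base point using only openness and convexity of $L$. The step from $y-\tau e_j\in\overline L$ to $y-\tau e_j\in L$ is the line-segment principle: a segment from an interior point to a closure point stays interior before its endpoint. This is more self-contained, since it rederives the needed piece of relative completeness from logarithmic convexity.

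For the reverse inclusion, the paper first proves the auxiliary bound $\inf_\Omega|z_j|>0$ for $j\notin I$. That proof uses compactness of $\overline\Omega$ to extract a boundary point on $\{z_j=0\}$, and then an inward-pointing vector tangent to that hyperplane. You instead follow the recession ray directly to its limit $z^*$ on $H_J=\{z_k=0,\ k\in J\}$. By applying the recession property at base points near $x$, you show a whole neighborhood of $z^*$ in $H_J$ lies in $\overline\Omega$. A torus-invariant defining function would then have $d\rho(z^*)$ vanishing on $T_{z^*}H_J$ (local maximum) and on the $z_k$-planes, $k\in J$ (rotation invariance), a contradiction. This is the same mechanism the paper uses later in Lemma~\ref{lem:vertical-origin}.

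A bonus of your argument is that boundedness enters only to force $d\le 0$ componentwise. So it shows, for possibly unbounded $\Omega$, that $\Rec(L)\cap(-\R^n_{\ge0})=\operatorname{cone}\{-e_j:j\in I\}$. The paper's infimum claim genuinely needs compactness and does not extend this way. Both arguments rely on a $C^1$ torus-invariant local defining function, which you can obtain by averaging a $C^1$ defining function over $(S^1)^n$.
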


\begin{proof}
First, suppose that \(j\in I\). 
If $x=(\log|z_1|,\ldots,\log|z_n|)\in L,$ 
then
$x-te_j = (\log|z_1|,\ldots,\log|z_j|-t,\ldots,\log|z_n|) \in L $
for every \(t\geq 0\) by the relative completeness. Hence
$-e_j\in\operatorname{recc}(L)$ and thus 
$\operatorname{cone}\{-e_j:j\in I\}
\subseteq
\operatorname{recc}(L)$ by the definition of the recession cone.

When $\Omega$ is bounded, we also prove the reverse inclusion. 
We first claim that if \(j\notin I\),
then
\begin{equation}\label{inf}
\inf_{z\in\Omega}|z_j|>0.
\end{equation}
Suppose, to the contrary, that this infimum were zero. Then there would
exist a sequence \(z^{(k)}\in\Omega\) such that
$z_j^{(k)} \rightarrow 0.$
Since \(\Omega\) is bounded, after passing to a subsequence we may assume
that $z^{(k)}\rightarrow p=(p_1,\ldots,p_n)\in\overline{\Omega}$.
Then \(p_j=0\). Since \(j\notin I\), 
\(p\in\partial\Omega\).
Let \(\rho\) be a smooth, torus-invariant local defining function for
\(\Omega\) near \(p\), so that
$\Omega=\{\rho<0\}$ and $d\rho(p)\neq 0.$
Invariance under rotations in the \(z_j\)-variable implies that
$\frac{\partial\rho}{\partial z_j}(p)=0.$
Let $H_j=\{z\in\mathbb{C}^n:z_j=0\}.$
Then the restriction of \(d\rho(p)\)
to \(T_pH_j\) is nonzero. Hence there exists a real vector
\(v\in T_pH_j\) such that $d\rho(p)[v]<0.$
Because \(H_j\) is a linear subspace, \(p+tv\in H_j\) for every real
number \(t\). Moreover,  
by Taylor expansion, 
\[
\rho(p+tv)
=
\rho(p)+t\,d\rho(p)[v]+o(t)
=
t\,d\rho(p)[v]+o(t).
\]
Therefore, $\rho(p+tv)<0$ and thus \(p+tv\in\Omega\) for all sufficiently small \(t>0\).
Therefore, $ p+tv\in\Omega\cap\{z_j=0\},$
which contradicts \(j\notin I\). This proves (\ref{inf}).

Now let $d=(d_1,\ldots,d_n)\in\operatorname{recc}(L).$
For every \(x\in L\) and every \(t\geq0\), we have $x+td\in L.$
Since \(\Omega\) is bounded, there exists \(R>0\) such that
$ x_k<\log R.$
If \(d_k>0\), then
$ x_k+td_k\rightarrow+\infty$
as $t\rightarrow\infty,$
which is impossible. Therefore
$ d_k\leq0$ for every $k$.
If \(j\notin I\),  it follows from (\ref{inf}) that there exists a constant \(\delta_j>0\) such that
$ |z_j|\geq\delta_j$
for every $z\in\Omega$ and thus $x_j\geq\log\delta_j$ for every $x \in L$. 
If \(d_j<0\), then $ x_j+td_j\rightarrow-\infty$
as $t\rightarrow\infty,$ which is impossible.
Therefore,
$d = \sum_{j\in I}(-d_j)(-e_j) \in\operatorname{cone}\{-e_j:j\in I\}.$
The lemma is proved combining the two inclusions.
\end{proof}

The following results due to Hsiao-Huang-Li (cf. Theorem 2.9 in \cite{HHL26}) are of fundamental importance in our proof.

\begin{theorem}[Hsiao-Huang-Li]\label{hhl1}
Let \(\Omega \subset \mathbb{C}^{n}\) be a possibly unbounded
pseudoconvex domain and let \(p \in \partial \Omega\) be a smooth,
strongly pseudoconvex boundary point. Assume that the Bergman metric
of \(\Omega\) is Einstein. Then \(\partial \Omega\) is
spherical near \(p\).
\end{theorem}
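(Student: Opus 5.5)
The plan is to reduce the statement to the local boundary analysis underlying the Huang--Xiao resolution of Cheng's conjecture. The extra work consists of two things: making that analysis available near a single strongly pseudoconvex point of a possibly unbounded domain, and replacing every global ingredient by a local one near \(p\).

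\textbf{Step 1: localization of the Bergman kernel.} We may assume \(A^2(\Omega)\neq\{0\}\), since otherwise the Bergman metric is not defined. Choose a small smoothly bounded strongly pseudoconvex domain \(U\subset\Omega\) whose boundary agrees with \(\partial\Omega\) near \(p\). We then show that
\[
K_\Omega(z)-K_U(z)
\]
extends smoothly up to \(\partial\Omega\) near \(p\), possibly up to a nonvanishing smooth multiplicative factor. The tool is Hörmander's \(L^2\) estimate on the pseudoconvex domain \(\Omega\), used to glue local peak functions and holomorphic sections from \(U\) into \(\Omega\). The weight is
\[
\varphi=(2n+2)\log|z-p|+\log(1+|z|^2)+\chi,
\]
with \(\chi\) a bounded strictly plurisubharmonic correction. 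Strict plurisubharmonicity of \(\varphi\) is what lets the estimate run without boundedness of \(\Omega\). Combined with Fefferman's expansion for \(U\), this gives, near \(p\),
\[
K_\Omega=\frac{\phi}{r^{n+1}}+\psi\log(-r),
\]
where \(r\) is a local defining function and \(\phi,\psi\) are smooth with \(\phi\) positive on \(\partial\Omega\).

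\textbf{Step 2: from the Einstein condition to constancy of \(J_\Omega\).} If \(\operatorname{Ric}(\omega_\Omega)=-c\,\omega_\Omega\), then \(\log J_\Omega\) is pluriharmonic with constant \(c-1\) absorbed. The expansion of Step 1 shows that \(J_\Omega\) extends continuously to \(\partial\Omega\) near \(p\), with the universal boundary value \((n+1)^n\pi^n/n!\). This boundary limit forces \(c=1\).

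To get constancy, we compute \(\log J_\Omega\) from the expansion: it has the form \(A+B\,r^{n+1}\log(-r)\) with \(A,B\) smooth. A pluriharmonic function of this form, with constant boundary values on an open piece of a strongly pseudoconvex hypersurface, must be constant near \(p\). This uses uniqueness for the Cauchy problem for pluriharmonic functions along a hypersurface of nonzero Levi form. Feeding \(J_\Omega\equiv\mathrm{const}\) into the Fu--Wong computation then yields that \(\psi\) vanishes to infinite order on \(\partial\Omega\) near \(p\).

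\textbf{Step 3: from vanishing of the log term to sphericity.} This is the step I expect to be the main obstacle.

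In complex dimension \(2\), vanishing of the log term to infinite order already implies local sphericity, by Graham's and Boutet de Monvel's results.

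In higher dimensions I would follow the Huang--Xiao strategy, localized near \(p\).
\begin{enumerate}[label=(\roman*)]
\item Since \(\omega_\Omega\) is Kähler--Einstein with Einstein constant \(-1\), the function \(u=K_\Omega^{-1/(n+1)}\), suitably normalized, is a local smooth-up-to-log-terms solution of Fefferman's Monge--Ampère equation \(J(u)=1\), \(u|_{\partial\Omega}=0\).
\item Comparing with the Lee--Melrose expansion, the absence of logarithmic terms forces the Graham obstruction function to vanish on the boundary piece.
\item Combining this with the infinite-order vanishing of \(\psi\), I would run the Huang--Xiao CR-invariant argument on the Fefferman (Lorentzian) ambient metric near the boundary. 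The goal is to show that the Chern--Moser curvature vanishes on an open piece, i.e.\ \(\partial\Omega\) is spherical near \(p\).
\end{enumerate}
Their argument uses compactness of the boundary only to integrate certain CR invariants globally. So the delicate point is to replace that integration with a local unique-continuation or Taylor-coefficient argument at \(p\). I would first try doing this via the Hirachi invariant-theoretic description of the log term: its leading CR-invariant coefficients, forced to vanish identically on an open set, successively kill the Chern--Moser tensor.
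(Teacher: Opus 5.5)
\paragraph{Main gap: Step 3 for $n\ge 3$.}
The paper does not prove this statement. It imports it from Hsiao--Huang--Li (Theorem 2.9 of their paper), where it rests on two things: a localization theorem for the Bergman kernel at a strongly pseudoconvex point of a possibly unbounded pseudoconvex domain, and a local sphericity argument.

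Your outline follows the same architecture, but it does not supply the step that carries the real content for $n\ge 3$. After Steps 1--2, everything you know is diagonal boundary asymptotics: $J_\Omega$ is constant, $\psi=O(r^\infty)$, and Graham's obstruction vanishes. Step 3(iii) then tries to extract vanishing of the Chern--Moser tensor from infinite-order vanishing of the log term, via Hirachi's invariant description. That is exactly a local form of Ramadanov's conjecture. It is known in $\C^2$ (Graham, Boutet de Monvel) but, to my knowledge, open in higher dimensions. Adding obstruction flatness does not close it in any known way.

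``Successively kill the Chern--Moser tensor'' names a hope, not a mechanism. Likewise, the claim that Huang--Xiao use compactness only to integrate CR invariants is an unsupported guess. The proofs this paper relies on do not go through Ramadanov's conjecture; they use more of the Einstein structure than the diagonal log-term expansion. As written, your plan establishes the statement at most for $n=2$. There the Fu--Wong route (constant $J$, flat log term, then Graham/Boutet de Monvel) does close, provided Step 1 is made rigorous.

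\paragraph{Step 1 is understated.}
Hörmander's weighted $L^2$ estimate on $\Omega$ gives two-sided comparison of $K_\Omega$ with $K_U$. With weights singular at interior evaluation points it can even give $K_\Omega/K_U\to 1$. It does not give $C^\infty$-regularity of $K_\Omega-K_U$ up to $\partial\Omega$ near $p$, which is what you need for an expansion with smooth $\phi,\psi$.

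That regularity requires boundary regularity for $\bar\partial$ near $p$, i.e.\ Kohn's subelliptic estimates in a pseudolocal form. On an unbounded domain the global $L^2$ theory behind such estimates is not available off the shelf. This is precisely the nontrivial localization theorem the paper imports from Hsiao--Huang--Li. Also, a bounded strictly plurisubharmonic correction $\chi$ need not exist on an unbounded $\Omega$.

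\paragraph{Step 2.}
Step 2 is fine once the expansion is available. A pluriharmonic function that is constant on an open strongly pseudoconvex boundary piece is indeed constant near $p$; one way to see this is to restrict it to small analytic discs attached to that piece.
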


Moreover, 
Lemma 3.2 in \cite{HHL26}, applied at a smooth strongly pseudoconvex boundary point $p$ of $\Omega$, yields
\begin{equation}\label{eq:Jconstant}
 J_\Omega \equiv \frac{(n+1)^n\pi^n}{n!}.
\end{equation}
In particular, the Bergman-Einstein metric $\omega_\Omega$ on $\Omega$ satisfies 
$ \Ric(\omega_\Omega)=-\omega_\Omega$ using  \eqref{eq:Jconstant}.

\begin{theorem}[Hsiao-Huang-Li]\label{hhl2}
Let \(\Omega\subset\C^n$ be a bounded domain with
real analytic boundary.  If the Bergman metric of $\Omega$ is a complete Einstein metric, then $\Omega$ is biholomorphic
to the complex unit ball $\B^n$.
\end{theorem}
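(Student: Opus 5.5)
The plan is to reduce Theorem~\ref{hhl2} to local sphericity at one boundary point, then propagate this along the boundary using real analyticity, and finally upgrade the local spherical structure to a global biholomorphism with $\B^n$ using completeness of the Bergman--Einstein metric.

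\emph{Step 1 (a strongly pseudoconvex point).} Since $\Omega$ is bounded, pick $p\in\partial\Omega$ maximizing $|z|$ over $\overline\Omega$. At $p$ the boundary is internally tangent to a sphere, so $p$ is a smooth strongly pseudoconvex point. Completeness of the Bergman metric gives pseudoconvexity. Theorem~\ref{hhl1} then says $\partial\Omega$ is spherical near $p$, and \eqref{eq:Jconstant} gives $J_\Omega\equiv (n+1)^n\pi^n/n!$ and $\Ric(\omega_\Omega)=-\omega_\Omega$.

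\emph{Step 2 (propagation).} Vanishing of the Chern--Moser curvature is a real-analytic condition on the strongly pseudoconvex part of a real-analytic hypersurface. By a theorem of Diederich--Forn\ae ss, a bounded real-analytic pseudoconvex domain is of finite type, so the weakly pseudoconvex set $W\subset\partial\Omega$ is a proper real-analytic subvariety of the boundary. I would then show that $W$ does not disconnect the boundary component $\Sigma$ of $\partial\Omega$ containing $p$. Granting this, $\Sigma\setminus W$ is spherical everywhere.

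The local biholomorphism $F$ from a neighbourhood of $p$ into the sphere can be continued along paths in $\Sigma\setminus W$ by Pinchuk's extension for real-analytic spherical hypersurfaces. It also extends holomorphically to a one-sided neighbourhood, giving a germ of a locally biholomorphic map $\Omega\supset U\to\B^n$.

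\emph{Step 3 (globalization and the main obstacle).} The key step is to continue $F$ to all of $\Omega$ as a local biholomorphism onto $\B^n$. The idea is to compare $\omega_\Omega$ with $F^*\omega_{\B^n}$. Both are complete Kähler--Einstein metrics with $\Ric=-\omega$ near the spherical boundary piece, so by uniqueness of Cheng--Yau metrics with the same boundary asymptotics they coincide near $p$. Since $\omega_\Omega$ is Kähler--Einstein, it is real analytic, so the curvature tensor of $\omega_\Omega$ has constant holomorphic sectional curvature near $p$. By unique continuation this holds on all of $\Omega$.

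Lu's theorem \cite{L66} then applies: a complete Bergman metric of constant holomorphic sectional curvature forces $\Omega$ to be covered by $\B^n$. More precisely, $\Omega\cong\B^n/\Gamma$ with $\Gamma$ acting freely.

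Finally, $\Gamma$ must be trivial. Since $\partial\Omega$ is compact and contains a strongly pseudoconvex spherical piece, a nontrivial $\Gamma$ would produce a compact ball quotient with smooth boundary near $p$. Burns--Shnider-type arguments rule this out: the limit set of $\Gamma$ would have to avoid the boundary image of a neighbourhood of $p$, forcing $\Gamma$ finite and hence trivial since the action is free.

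I expect the main obstacle to be the equality of $\omega_\Omega$ and $F^*\omega_{\B^n}$ near $p$, that is, the local-to-global passage in Step 3. The two metrics are only known to agree asymptotically at the boundary. I would first try to argue via the Fefferman-type expansion of the Bergman kernel together with the constancy of $J_\Omega$ to match their boundary behaviour. The Schwarz lemma for Kähler--Einstein metrics should then give the two-sided comparison from which equality follows.
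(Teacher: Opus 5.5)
The paper gives no proof of this statement. It is quoted from \cite{HHL26} and used as a black box, only in the Type~II case of Theorem~\ref{bounded}. Judged on its own, your proposal has a genuine gap, exactly where you suspect it. Step~3 asserts that $\omega_\Omega$ and $F^*\omega_{\B^n}$ coincide near $p$ because both are K\"ahler--Einstein with $\Ric=-\omega$ near a spherical boundary piece. No such local uniqueness holds.

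Cheng--Yau uniqueness is global: it rests on Yau's Schwarz lemma for a holomorphic map defined on a whole complete manifold. Your $F$ is only a germ at $p$, and a global $F$ is what you are trying to build. Locally, solutions of Fefferman's Monge--Amp\`ere equation vanishing on a given boundary piece are determined by that piece only up to order $n+1$; the higher-order data are global.

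The egg $E=\{|z_1|^2+|z_2|^4<1\}\subset\C^2$ refutes your inference. The map $F(z_1,z_2)=(z_1,z_2^2)$ sends $E$ onto $\B^2$ and is locally biholomorphic off $\{z_2=0\}$. So $\partial E$ is spherical near every boundary point with $z_2\neq0$, and $F^*\omega_{\B^2}$ is K\"ahler--Einstein there with $\Ric=-\omega$. Yet the complete K\"ahler--Einstein metric $\omega_{\mathrm{CY}}$ of $E$ agrees with $F^*\omega_{\B^2}$ on no open set. By real analyticity, agreement on an open set would spread to the connected set $E\setminus\{z_2=0\}$, and then by continuity to all of $E$. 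But $F^*\omega_{\B^2}$ degenerates along $\{z_2=0\}$. Your Step~3 uses only completeness, the Einstein equation and local sphericity, so it applies verbatim to $(E,\omega_{\mathrm{CY}})$ and gives a false conclusion. Matching boundary expansions via $J_\Omega\equiv\mathrm{const}$ cannot help, since on $E$ the two potentials already share every locally determined term. The Bergman--Einstein hypothesis must be used globally, and your proposal does not say how.

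Two further points. First, the essential difficulty in \cite{HHL26} beyond the strongly pseudoconvex case is the weakly pseudoconvex locus $W$, and your plan never confronts it. In the egg the boundary is spherical at every strongly pseudoconvex point, yet the continued map $F$ is ramified along $\{z_2=0\}$, which meets $\partial E$ exactly in $W$. For Reinhardt domains containing $0$, the paper excludes such examples via $J_\Omega(0)$ and the moment inequality in Theorem~\ref{thm:origin-case}. Step~2 is in any case superfluous, since Theorem~\ref{hhl1} already applies at every strongly pseudoconvex point. Its non-disconnection claim is also unproved and not obvious, because $W$ can have real codimension one in $\partial\Omega$.

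Second, Lu's theorem \cite{L66} yields a biholomorphism onto $\B^n$ directly, not a quotient $\B^n/\Gamma$. Your boundary-geometric argument against a nontrivial $\Gamma$ is moreover false. Take the Type~II domains of Section~\ref{sec:bounded} with all $m_\alpha=1$, e.g.\ $\{0<\log|z_0|<\pi,\ |z_1|^2<\sin\log|z_0|\}$. These are Burns--Shnider-type examples: bounded and not simply connected, with real-analytic, strongly pseudoconvex, spherical boundary. Excluding them genuinely requires the Bergman--Einstein hypothesis, which is why the paper sends that case to Theorem~\ref{hhl2}.
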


\section{Logarithmic geometry of a Reinhardt domain}
Assume, from now on, that $n\geq2$ and that $\Omega$ satisfies
the hypotheses of Theorem~\ref{thm:main}.  
Since $\Omega$ admits a complete Bergman metric, $\Omega$ is pseudoconvex \cite{B55}. 
We have the following standard result in the literature.

\begin{proposition}\label{prop:connected}
The hypersurfaces $S$ and $M$ are connected.
\end{proposition}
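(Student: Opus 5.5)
We need to prove that $S=\partial L$ is connected, where $L$ is an open convex subset of $\R^n$, and that $M=S+i\R^n$ is connected. Since $M$ is homeomorphic to $S\times\R^n$, connectedness of $M$ follows at once from that of $S$, so everything reduces to a statement about convex sets. Note first that $L\neq\varnothing$, since $\Omega\cap(\C^*)^n$ is a nonempty open set. Also $L\neq\R^n$: otherwise $L$ contains an affine line, and Lemma~\ref{lem:no-line} gives $A^2(\Omega)=\{0\}$, contradicting the assumption that the Bergman metric is well defined. Hence $S\neq\varnothing$.

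The key step is the standard fact that the boundary of an open convex set $L\subsetneq\R^n$ with $n\ge 2$ is connected unless $L$ is a slab between two parallel hyperplanes. I would argue via the recession cone and the lineality space. Let $V$ be the lineality space of $\overline L$, namely the largest linear subspace $V$ with $\overline L+V=\overline L$. Then $\overline L=V\oplus C$ with $C$ closed, convex, and containing no line, and $S=V\times\partial_{V^\perp}C$, where the boundary is taken in $V^\perp$. Consider the cases by $k=\dim V^\perp\ge1$:
\begin{enumerate}[label=(\roman*)]
\item If $k=1$, then $C$ is a half-line or a segment. A segment gives a slab, and $L$ then contains the affine lines parallel to $V\neq0$ (since $n\ge2$), which Lemma~\ref{lem:no-line} rules out. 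A half-line gives a half-space with connected boundary, and this case is likewise excluded by Lemma~\ref{lem:no-line}.
\item More directly, Lemma~\ref{lem:no-line} forces $V=\{0\}$ in general, because $L$ contains no line. So $L$ itself is line-free and $k=n\ge2$.
\end{enumerate}

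For a line-free closed convex set $C\subset\R^n$ with $n\ge2$ and nonempty interior, fix an interior point $a$. Radial projection from $a$ maps $\partial C$ homeomorphically onto the open subset $U=\{u\in S^{n-1}: u\notin\operatorname{recc}(C)\}$ of the sphere; each ray in a direction of $U$ meets $\partial C$ exactly once, continuously in $u$. Since $C$ contains no line, $K=\operatorname{recc}(C)\cap S^{n-1}$ is a closed spherically convex set containing no pair of antipodal points. Then $S^{n-1}\setminus K$ is connected for $n\ge 2$: when $K$ is nonempty it lies in an open hemisphere, so its complement deformation retracts onto the complementary closed hemisphere, which is connected. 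Therefore $S\cong U$ is connected, and so is $M\cong S\times\R^n$.

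I expect the main obstacle to be rigor in the radial-projection homeomorphism, specifically the continuity of the hitting point and the fact that $U$ is open and equals the set of directions whose rays exit $C$. I would handle this using the gauge (Minkowski) function of $C-a$, which is convex, finite, and continuous. One also has to verify that the recession cone of a line-free convex set lies in an open half-space, which is a standard separation argument; this point is what makes the complement connected.
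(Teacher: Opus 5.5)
Your proposal is correct and follows the same route as the paper: Lemma~\ref{lem:no-line} makes $L$ line-free, the boundary of a line-free open convex set in $\R^n$ with $n\ge 2$ is connected, and then $M\cong S\times\R^n$ is connected. The only difference is that the paper cites this convex-geometry fact from the literature, whereas you prove it directly via the radial-projection homeomorphism $S\cong S^{n-1}\setminus\bigl(\Rec(\overline L)\cap S^{n-1}\bigr)$; your case (i) is redundant given (ii), and the last step only needs connectedness of that complement, not a full deformation retraction.
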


\begin{proof}
It follows from the argument above that both $S$ and $M$ 
are smooth manifolds.
Since the Bergman metric $\omega_\Omega$ is well defined on $\Omega$, $A^2(\Omega)\not=\{0\}$. By Lemma \ref{lem:no-line}, 
$L_\Omega$ does not contains an affine line. 
It is well known that the boundary of a line-free open convex
subset of $\mathbb{R}^n$, $n\geq 2$, is connected (cf. Lemma 2.2 and the preceding paragraph in \cite{G12}). In fact, it is
homeomorphic to either $S^{n-1}$ or $\mathbb{R}^{n-1}$ depending on
whether the convex set is bounded or unbounded, respectively.
 Since $L$ is convex,  \(S\) and
thus \(M\), are connected.
\end{proof}

We record the following standard consequence of convex geometry and
the logarithmic geometry of Reinhardt domains (see, for example,
\S 8 and \S 14 in \cite{R70} and Lemma 4.5 in \cite{G12}).

\begin{lemma}\label{lem:strong-point}
The boundary $\partial\Omega$ has a strongly pseudoconvex point whose
coordinates are all nonzero.
\end{lemma}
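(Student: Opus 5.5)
We work in logarithmic coordinates. Near a point \(p\in\partial\Omega\cap(\C^*)^n\), the map \(\operatorname{Exp}\) is a local biholomorphism carrying \(M=S+i\R^n\) onto \(\partial\Omega\). Strong pseudoconvexity is a biholomorphic invariant, so it suffices to find a point \(x^*\in S=\partial L\) at which \(M\) is strongly pseudoconvex. The boundary is smooth, so \(S\) is a smooth hypersurface with local defining function \(r(\operatorname{Re}w)\). For a tube hypersurface the Levi form at \(x+iy\) is, up to a positive factor, the real Hessian of \(r\) restricted to \(T_xS\). Hence \(M\) is strongly pseudoconvex at \(x^*+i\R^n\) exactly when the convex hypersurface \(S\) has positive definite second fundamental form at \(x^*\), i.e.\ all principal curvatures of \(S\) are positive there. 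The statement therefore reduces to a purely convex-geometric claim: the smooth boundary of the convex open set \(L\) has a point of positive Gauss curvature.

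First we check that \(S\neq\varnothing\). By Lemma \ref{lem:no-line} and the existence of the Bergman metric, \(L\) contains no affine line, so \(L\neq\R^n\). Also \(L\) is nonempty and open, so \(\partial L\neq\varnothing\). Each point of \(S\) corresponds to points of \(\partial\Omega\cap(\C^*)^n\), so points found on \(S\) automatically have all coordinates nonzero.

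If \(L\) is bounded, the argument is classical. Take the smallest closed Euclidean ball \(\overline{B(c,R)}\) containing \(\overline L\), and a point \(x^*\in S\cap\partial B(c,R)\). There \(S\) lies inside the sphere and touches it, so by comparison of second fundamental forms every principal curvature of \(S\) at \(x^*\) is at least \(1/R>0\).

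The main obstacle is the unbounded case, where no enclosing sphere exists. Here we use line-freeness: since \(L\) contains no line, \(\Rec(\overline L)\) is a pointed closed cone. Standard convex analysis (\S 8 and \S 14 of \cite{R70}) then gives a vector \(v\) in the interior of the polar cone, so that \(\langle v,d\rangle<0\) for every nonzero \(d\in\Rec(\overline L)\). For such \(v\), the function \(\langle v,\cdot\rangle\) is bounded above on \(\overline L\). Moreover, for each \(c\) the slab \(\overline L\cap\{\langle v,x\rangle\ge c\}\) is compact, since its recession cone is \(\{0\}\). Choose \(c\) slightly below \(\sup_{\overline L}\langle v,\cdot\rangle\), so that \(K=\overline L\cap\{\langle v,x\rangle\ge c\}\) is a nonempty compact convex cap.

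We then apply a smallest-enclosing-ball argument to the cap \(K\), arranged so that the contact point lies on the smooth part \(S\) rather than on the cutting hyperplane. Concretely, consider balls \(B(x_0-tv,\rho_t)\) with \(t\to\infty\), centred far along \(-v\) and of radius just large enough to contain \(K\). As \(t\) grows these balls approximate the half-space \(\{\langle v,x\rangle\le\sup\langle v,\cdot\rangle\}\). Because the ball must contain the whole cap, they touch \(\overline L\) only near the supporting face \(\{\langle v,x\rangle=\sup\}\), and in particular away from the cut \(\{\langle v,x\rangle=c\}\). At a touching point \(x^*\in S\), the hypersurface \(S\) lies locally inside a sphere of radius \(\rho_t<\infty\), so its principal curvatures are at least \(1/\rho_t>0\).

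This is the step I expect to require care: we must verify that the contact point avoids the cutting hyperplane and lies where \(S\) and the sphere are internally tangent. I would handle it by first picking \(v\) to be a supporting direction at which the supremum is attained, which is possible after perturbing \(v\) inside the open polar cone (see Lemma 4.5 of \cite{G12}). Pulling \(x^*\) back via \(\operatorname{Exp}\) then gives the required strongly pseudoconvex point of \(\partial\Omega\) with all coordinates nonzero.
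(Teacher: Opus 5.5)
Your argument is correct, but it takes a different route from the paper's. Both start from line-freeness and the interior of the polar cone of $\Rec(L)$.

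\textbf{The paper's route.} It uses the whole open set $U=\operatorname{int}(C^\circ)\cap S^{n-1}$ of directions. Every $\nu\in U$ is the outward normal at a maximizer of $y\mapsto\nu\cdot y$ on $\overline L$, so the image of the Gauss map $N\colon S\to S^{n-1}$ contains $U$. Sard's theorem then gives a point where $dN$ is nonsingular, and convexity upgrades this to positive definiteness.

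\textbf{Your route.} You use a single direction $v$ to cut off a compact cap, followed by a farthest-point (touching sphere) comparison with centres receding along $-v$. This avoids Sard's theorem and gives an explicit lower bound $1/\rho_t$ on the principal curvatures at the contact point. It also subsumes your bounded case. The paper's argument is shorter and yields more: almost every normal direction in $U$ is realized at a strongly convex point.

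\textbf{The step you flagged.} Your proposed fix is not what is needed. For every $v$ in the interior of the polar cone, $h=\max_{\overline L}\langle v,\cdot\rangle$ is automatically attained because the cap is compact, so no perturbation of $v$ or appeal to the cited lemma is required. What settles the step is a one-line estimate.

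Take $|v|=1$, $x_0\in K$, and $x_h\in K$ with $\langle v,x_h\rangle=h$. Then
\[
|x-(x_0-tv)|^2=|x-x_0|^2+2t\langle v,x-x_0\rangle+t^2 .
\]
Any maximizer $x^*$ of this over $K$ satisfies
\[
2t\bigl(h-\langle v,x^*\rangle\bigr)\le |x^*-x_0|^2-|x_h-x_0|^2\le \operatorname{diam}(K)^2 .
\]
Hence $\langle v,x^*\rangle>c$ once $t>\operatorname{diam}(K)^2/(2(h-c))$.

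Consequently $K$ and $\overline L$ coincide near $x^*$, so $x^*\in S$ and $S$ lies locally inside the sphere of radius $\rho_t$ through $x^*$. The outward normals agree: otherwise points of $L$ would lie outside the ball. The second-order comparison then gives all principal curvatures of $S$ at $x^*$ at least $1/\rho_t$, exactly as you intended.
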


\begin{proof}
By Lemma~\ref{lem:no-line}, $L$ does not have an affine line. It is a standard fact in convex geometry that $C:=\Rec(L)$ is pointed, i.e. $C\cap(-C)=\{0\}.$ (cf. \cite{R70}).
Then the polar cone
\[
C^\circ=\{\nu\in\mathbb R^n:\nu\cdot q\leq0
\text{ for all }q\in C\}
\]has nonempty interior.
Therefore, $ U:=\left(\operatorname{int}C^\circ \right)\cap S^{n-1}$
is a nonempty open subset of \(S^{n-1}\).  Since, for every \(\nu\in U, q\in C\setminus\{0\}\),
$ \nu\cdot q<0$, it
 follows that \(y\mapsto\nu\cdot y\) attains its maximum on
\(\overline L\). 
The maximum is attained at some \(y_\nu\in S\). It follows from
the smoothness of \(S\) that its outward unit normal there is
\(\nu\).
Thus the image of the Gauss map $ N:S\rightarrow S^{n-1}$
contains \(U\).  By Sard's theorem,
 \(dN\) is nonsingular at some \(y_0\in S\).  
 Since $L$ is convex, 
\(dN\) must be positive definite at
\(y_0\). Namely, the hessian of the defining function of $S$ is positive definite restricted to $T_{y_0} S$. 
Since
\[
 \operatorname{Exp}(w)=(e^{w_1},\ldots,e^{w_n})
\]
is locally biholomorphic, \(\partial\Omega\) is strongly pseudoconvex at
\(\operatorname{Exp}(y_0)\) by straightforward calculation.
In particular, all coordinates of \(\operatorname{Exp}(y_0)\) are  nonzero.
\end{proof}

\begin{proposition}\label{prop:sphere}
The tube $M=S+i\R^n$ is a connected closed strongly pseudoconvex
spherical tube hypersurface.
\end{proposition}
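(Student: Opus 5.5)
Connectedness of $M$ is Proposition~\ref{prop:connected}. Closedness is also immediate: $S=\partial L$ is a closed subset of $\R^n$, so $M=S+i\R^n$ is closed in $\C^n$. Smoothness of $M$ comes from the torus-invariant defining function $r(\operatorname{Re}w)$ with $dr\neq0$ on $S$. The real content is therefore twofold: $M$ is strongly pseudoconvex at every point, and $M$ is spherical near every point. I would prove both by first obtaining them on an open piece of $M$, and then spreading them over the connected hypersurface using Isaev's extension results for spherical tube hypersurfaces \cite{I93,I11}.

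\textbf{Step 1 (a spherical seed).} Lemma~\ref{lem:strong-point} produces $y_0\in S$ such that $p=\operatorname{Exp}(y_0)\in\partial\Omega\cap(\C^*)^n$ is a smooth strongly pseudoconvex boundary point. Theorem~\ref{hhl1} applies to the possibly unbounded pseudoconvex domain $\Omega$, so $\partial\Omega$ is spherical near $p$. Since $\operatorname{Exp}$ is a local biholomorphism carrying $M$ onto $\partial\Omega\cap(\C^*)^n$, the tube $M$ is spherical and strongly pseudoconvex on a neighborhood of $y_0+i\R^n$. Translation invariance in $\operatorname{Im}w$ lets us take this neighborhood of the form $W+i\R^n$ with $W\subset S$ open.

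\textbf{Step 2 (propagation).} Let $S_0\subset S$ be the set of points near which $S$ has a nondegenerate (hence positive definite, by convexity) second fundamental form. This is exactly the set where $M$ is strongly pseudoconvex, since the Levi form of a tube $\{r(\operatorname{Re} w)=0\}$ is the Hessian of $r$ restricted to $T S$. The set $S_0$ is open and, by Step 1, nonempty.

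Now take the connected component $S_1$ of $S_0$ containing $y_0$. On $S_1+i\R^n$ the tube is strongly pseudoconvex and real analytic near the seed. Sphericity is the vanishing of the Chern--Moser tensor, which is a real-analytic condition on a real-analytic strongly pseudoconvex tube. Hence sphericity should propagate along $S_1$. Alternatively, Theorem~\ref{hhl1} applies directly at every point of $S_1$, which avoids the analyticity argument altogether. Either way, $S_1+i\R^n$ is spherical, and Isaev's results identify the base $S_1$ as an open piece of one of the affine models of Theorem~\ref{DY}.

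\textbf{Step 3 (main obstacle: no degeneration).} The hard point is showing $S_1=S$, i.e.\ that the boundary of $S_1$ relative to $S$ is empty. At a limit point $y^*\in\overline{S_1}\cap S$, the smooth hypersurface $S$ coincides on $S_1$ with a real-analytic model hypersurface (an affine image of a Type I, II or III base). I would argue as follows:

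\begin{enumerate}
\item Analytic continuation of the model base shows that the closure of $S_1$ near $y^*$ is again a piece of the same model.
\item The models listed in Theorem~\ref{DY} are strongly convex everywhere on their bases, and they are complete as closed hypersurfaces.
\item Since $S$ is a smooth convex hypersurface containing this model piece, $S$ agrees with the model near $y^*$.
\item Consequently $y^*\in S_0$, and then $y^*\in S_1$, so $S_1$ is closed in $S$.
\end{enumerate}

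Combined with the connectedness of $S$ from Proposition~\ref{prop:connected}, this gives $S_1=S$. The step I expect to need the most care is exactly this limiting argument: ruling out that the spherical piece of $S$ merges smoothly, but non-analytically, into a flat or degenerate portion of the convex surface. This is precisely what Isaev's global extension theorem for spherical tubes over connected bases is designed to handle, so I would invoke \cite{I11} there.
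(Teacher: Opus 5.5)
Most of your plan matches the paper. You take the seed from Lemma~\ref{lem:strong-point}, get sphericity on the strongly pseudoconvex locus by applying Theorem~\ref{hhl1} at each point, extend a connected component via Isaev, and run an open--closed argument on the connected $S$. In Step~2, discard the analytic-propagation alternative. $S$ is only $C^\infty$, so there is no real-analyticity along which vanishing of the Chern--Moser tensor could propagate; the pointwise use of Theorem~\ref{hhl1} is what works.

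\textbf{The gap: item~3 of Step~3.} You infer that since $S$ is a smooth convex hypersurface containing the model piece, $S$ agrees with the model near $y^*$. This does not follow. A smooth strongly convex curve can be a circular arc on one side of a point and non-circular on the other, for example with curvature $1$ for arclength $s\le 0$ and $1+e^{-1/s}$ for $s>0$. Isaev's extension theorem does not close the gap either. It concerns the spherical tube $S_1+i\R^n$ and its extension, and says nothing about how $S$ behaves outside $S_1$. Since you derive $y^*\in S_0$ from item~3, the closedness of $S_1$ in $S$ is not established as written.

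\textbf{The missing step.} What is needed is continuity of second derivatives, which gives $y^*\in S_0$ directly, with no appeal to convexity or analytic continuation.
\begin{enumerate}
\item Let $\Sigma$ be the base of the closed real-analytic extension $M^{\mathrm{ext}}$ of $S_1+i\R^n$. Since $\Sigma$ is closed and contains $S_1$, we get $y^*\in\Sigma$.
\item Take $q_\nu\in S_1$ with $q_\nu\to y^*$. The hypersurfaces $S$ and $\Sigma$ coincide near each $q_\nu$, so their tangent planes and second fundamental forms agree there. Equivalently, the Levi forms of $M$ and $M^{\mathrm{ext}}$ agree.
\item Both hypersurfaces are smooth near $y^*$, so passing to the limit shows the second fundamental form of $S$ at $y^*$ equals that of $\Sigma$ at $y^*$.
\item The latter is positive definite. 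The connected component of $M^{\mathrm{ext}}$ is locally equivalent to a nondegenerate hyperquadric, so its Levi signature is constant, hence positive definite everywhere (this is what your item~2 really rests on). The limit is therefore a specific positive definite form, not merely a limit of positive definite forms.
\item Hence $y^*\in S_0$. Because $S_0$ is open, a small connected neighbourhood of $y^*$ in $S_0$ meets the component $S_1$, so $y^*\in S_1$.
\end{enumerate}
This is exactly the paper's argument that $C$ is closed in $S$. The scenario you worry about, a spherical piece merging non-analytically into a flat part, is ruled out by this $C^2$ continuity, not by an extension theorem.
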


\begin{proof}
Let
\[
 U=\{x\in S:M\text{ is strongly pseudoconvex at }x+i0\}.
\]
By Lemma \ref{lem:strong-point} and the local biholomorphism of $\operatorname{Exp}$, $U$ is a nonempty 
 open subset of $S$.  
By  the localization argument, 
$\partial\Omega$ is locally spherical near $\operatorname{Exp}(x)$ (cf. Theorem 2.9 in \cite{HHL26}).
Using the local biholomorphism of $\operatorname{Exp}$ and invariance under imaginary
translations, $U+i\R^n$ is locally spherical.

Let $C$ be a connected component of $U$.  The connected tube
$C+i\R^n$ is locally spherical.
It follows from \cite{I93} (cf. also Theorems~4.1--4.2 in \cite{I11}) that
$C+i\R^n$ can be embedded in a closed nonsingular real-analytic tube
hypersurface. Let $M^{\mathrm{ext}}$ be the connected component of this
extension containing $C+i\R^n$ and $M^{\mathrm{ext}}$ is locally CR-equivalent to a
nondegenerate hyperquadric.
In particular, the signature of the Levi form is constant. 
Since the Levi form is positive definite along $C+i\R^n$, it is positive
definite everywhere on $M^{\mathrm{ext}}$.

We claim that $C$ is closed in $S$.  Let $q\in S\cap\overline C$ and
choose $q_\nu\in C$ with $q_\nu\to q$.  Since
$q_\nu+i0\in M^{\mathrm{ext}}$ and $M^{\mathrm{ext}}$ is closed, we
have $q+i0\in M^{\mathrm{ext}}$. Along $C+i\R^n$, the hypersurfaces
$M$ and $M^{\mathrm{ext}}$ coincide. Choose compatible local
coorientations. Their tangent spaces and Levi forms agree at each
$q_\nu+i0$; smoothness of both hypersurfaces and passage to the limit show
that they also agree at $q+i0$. The Levi form of $M$ at $q+i0$ is therefore
positive definite. Hence $q\in U$. A sufficiently small connected
neighborhood of $q$ in $U$ meets $C$ and 
thus $q\in C$, proving that $C$
is closed in $S$.

As a component of the open set $U$ in the smooth manifold $S$,  $C$ is open in $S$.  Since $S$ is connected, it follows that
$C=S$.  Consequently
\[
 M=S+i\R^n=C+i\R^n\subset M^{\mathrm{ext}}.
\]
Both $M$ and $M^{\mathrm{ext}}$ are smooth hypersurfaces of the same dimension, so $M$ is open
in $M^{\mathrm{ext}}$.  Because $S$ is closed
in $\R^n$, $M$ is also closed in $M^{\mathrm{ext}}$.  The connectedness of $M^{\mathrm{ext}}$ now yields
$M=M^{\mathrm{ext}}$.  Therefore $M$ is locally spherical everywhere.
\end{proof}

\section{A sharp moment inequality and the characterization of Reinhardt domains}
In this section, we prove the following characterization of Reinhardt domains,
which may be of independent interest.

\begin{theorem}\label{thm:origin-case}
Assume $n\geq2$.  Let $\Omega\subset\C^n$ be a possibly unbounded smooth
Reinhardt domain with a well-defined complete Bergman-Einstein metric.
If $0\in\Omega$, then $\Omega$ is an ellipsoid and hence is
biholomorphic to $\B^n$.
\end{theorem}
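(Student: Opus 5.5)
Since $0\in\Omega$ and $\Omega$ is Reinhardt, the monomials $z^\alpha$ with $\alpha\in\Z_{\ge0}^n$ that lie in $A^2(\Omega)$ are mutually orthogonal, and the Bergman kernel has the expansion
\[
K_\Omega(z)=\sum_{\alpha}\frac{|z^\alpha|^2}{\|z^\alpha\|^2},
\]
where the sum runs over those $\alpha$ with $z^\alpha\in A^2(\Omega)$; Laurent monomials with negative exponents are excluded by holomorphy at the origin. In particular $A^2(\Omega)\neq\{0\}$ forces $1\in A^2(\Omega)$, so $\Vol(\Omega)<\infty$, and then $z_j\in A^2(\Omega)$ as well. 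The plan is to evaluate the identity \eqref{eq:Jconstant}, $J_\Omega(0)=\frac{(n+1)^n\pi^n}{n!}$, at the origin. By Lemma~\ref{lem:strong-point} and Theorem~\ref{hhl1}, this identity holds for our $\Omega$. At $z=0$ only the terms with $|\alpha|\le1$ contribute to $K_\Omega$ and to $\partial_j\partial_{\bar k}\log K_\Omega$, so
\[
K_\Omega(0)=\frac1{\Vol(\Omega)},\qquad \partial_j\partial_{\bar k}\log K_\Omega(0)=\delta_{jk}\,\frac{\Vol(\Omega)}{\int_\Omega|z_j|^2}.
\]
Off-diagonal terms vanish by torus invariance. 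Hence
\[
J_\Omega(0)=\frac{\Vol(\Omega)^{n+1}}{\prod_{j}\int_\Omega|z_j|^2\,dV}.
\]

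The core of the argument is a sharp moment inequality: for every Reinhardt domain of finite volume containing the origin (pseudoconvexity may be needed, i.e.\ $\log$-convexity of $L$),
\[
\Vol(\Omega)^{n+1}\ \ge\ \frac{(n+1)^n\pi^n}{n!}\prod_{j=1}^n\int_\Omega|z_j|^2\,dV,
\]
with equality exactly for the ellipsoids $\sum a_j|z_j|^2<1$. I would prove this by passing to the absolute-value coordinates $t_j=|z_j|^2$. With $dV=\pi^n\,dt$, the domain $\Omega$ corresponds to a set $D\subset\R^n_{\ge0}$, and the inequality becomes
\[
|D|^{n+1}\ge\frac{(n+1)^n}{n!}\prod_j\int_D t_j\,dt.
\]
Equality holds for the simplex $\{t\ge0,\ \sum a_jt_j<1\}$. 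Since $0\in\Omega$ and $\Omega$ is Reinhardt, $D$ is a ``lower set'': $t\in D$ and $0\le s\le t$ imply $s\in D$. I would then do a rearrangement or symmetrization. First, by AM--GM with scalings $t_j\mapsto\lambda_jt_j$ (which preserve the ratio), it suffices to bound $\frac1n\sum_j\int_D t_j\,dt=\frac1n\int_D|t|_1\,dt$ after normalizing all $\int_D t_j$ to be equal. Second, among sets of fixed volume, $\int_D|t|_1\,dt$ is maximized by\ldots{} No: we need an upper bound on the moment. The bathtub principle gives that for fixed volume, $\int_D |t|_1$ is \emph{minimized} by the simplex, which goes the wrong direction. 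So the real inequality must use the lower-set structure. I would instead use the following observation: for a lower set, the function $\phi(s)=|D\cap\{|t|_1>s\}|$ is controlled so that $|D\cap\{|t|_1<s\}|\ge$ the volume of the simplex intersected with $D$'s projections. This is the step I expect to be the main obstacle.

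Let me reconsider the direction: for the ellipsoid, is the ratio $\Vol^{n+1}/\prod\int|z_j|^2$ maximal or minimal? Elongated lower sets, for instance a box $[0,1]^n$ in $t$, give $|D|^{n+1}/\prod\int t_j=1/(1/2)^n=2^n$, whereas the simplex gives $(n+1)^n/n!$, which is larger for $n\ge2$ (for $n=2$: $9/2>4$). So the simplex should \emph{maximize} the ratio, and the inequality to prove is
\[
|D|^{n+1}\le\frac{(n+1)^n}{n!}\prod_j\int_D t_j\,dt
\]
for lower sets. I would attack this by a Brunn--Minkowski/Prékopa-type argument: write $\int_D t_j\,dt=\int_0^\infty|D\cap\{t_j>s\}|\,ds$. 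Each slice $D\cap\{t_j>s\}$ contains the translate by $se_j$ of\ldots{} Rather, by the lower-set property, $|D\cap\{t_j>s\}|$ relates to $|D\cap(D+se_j)|$. Then an induction on dimension, using Hölder, should reduce the problem to the one-variable case. As a fallback, if this only works assuming $\log$-convexity of $L$, I would use that, since pseudoconvexity is available.

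Given the sharp inequality with its equality case, the identity $J_\Omega(0)=\frac{(n+1)^n\pi^n}{n!}$ forces equality, so $D$ is a simplex up to a null set. Since $\Omega$ is open with smooth boundary, $\Omega=\{\sum a_j|z_j|^2<1\}$, which is linearly equivalent to $\B^n$. The unbounded case needs no separate treatment: finiteness of the volume follows from $1\in A^2(\Omega)$, and equality then forces boundedness.
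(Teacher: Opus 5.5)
\textbf{Verdict: genuine gap.} Several parts of your proposal match the paper: the reduction to $J_\Omega(0)=\Vol(\Omega)^{n+1}/\prod_j\int_\Omega|z_j|^2\,dV$, the passage to $D$ via $dV=\pi^n\,dt$, and the corrected target $|D|^{n+1}\le\frac{(n+1)^n}{n!}\prod_j\int_D t_j\,dt$. Your endgame is also sound and is a legitimate shortcut for the paper's log-convexity argument: a.e.\ equality of the open set $\Omega$ with the open convex ellipsoid, plus a local defining function at any point of $\partial\Omega$ inside the ellipsoid, forces exact equality.

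\textbf{The gap.} The heart of the theorem, the sharp moment inequality with its equality case, is never proved. The Brunn--Minkowski/Pr\'ekopa/H\"older-induction route is only gestured at, and you flag it yourself as the main obstacle.

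\textbf{The fix.} You discarded the bathtub argument only because you had the inequality backwards at that moment. An upper bound on $|D|^{n+1}/\prod_j m_j$ requires a \emph{lower} bound on the moments at fixed volume, and that is exactly what bathtub supplies. After your scaling normalization $m_1=\cdots=m_n$ (the ratio is invariant), one has $\prod_j m_j=\bigl(\frac1n\int_D|t|_1\,dt\bigr)^n$. Let $\Sigma_r=\{t\in\R^n_+:|t|_1<r\}$ with $|\Sigma_r|=|D|$, so that $|D\setminus\Sigma_r|=|\Sigma_r\setminus D|$. Then
\[
\int_D|t|_1\,dt-\int_{\Sigma_r}|t|_1\,dt=\int_{D\setminus\Sigma_r}\bigl(|t|_1-r\bigr)\,dt+\int_{\Sigma_r\setminus D}\bigl(r-|t|_1\bigr)\,dt\ \ge\ 0 .
\]
Hence $\prod_j m_j(D)\ge\prod_j m_j(\Sigma_r)$, and the ratio for $D$ is at most the ratio for $\Sigma_r$, which is $\frac{(n+1)^n}{n!}$. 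In the equality case both integrals on the right vanish, so $|D\triangle\Sigma_r|=0$. The paper does the same thing without normalizing: it tests against $\ell=\sum_j a_jx_j$ with $a_j=V/((n+1)m_j)$, so that $\int_E\ell=\frac{nV}{n+1}$, and deduces $r\le1$. Neither the lower-set structure nor log-convexity is needed. The inequality holds for every measurable $E\subset\R^n_+$ with finite volume and finite positive moments.

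\textbf{A smaller flaw in the finiteness step.} The two implications you use are false in general:
\begin{itemize}
\item $A^2(\Omega)\neq\{0\}$ does not force $1\in A^2(\Omega)$. The smooth complete pseudoconvex domain $\{|z_2|^2(1+|z_1|^2)<1\}$ has infinite volume, yet $z_2\in A^2$.
\item $\Vol(\Omega)<\infty$ does not force $z_j\in A^2(\Omega)$. The domain $\{|z_2|^2(1+|z_1|^2)^{3/2}<1\}$ has finite volume but $\int|z_1|^2\,dV=\infty$.
\end{itemize}
What you need is the hypothesis that the Bergman metric is well defined at $0$. First, $K_\Omega(0)>0$ gives some $f\in A^2(\Omega)$ with $f(0)\neq0$. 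Second, positive definiteness of $(\partial_j\partial_{\bar k}\log K_\Omega)(0)$ gives some $f$ with $\partial_jf(0)\neq0$. The torus projections $P_\alpha f=a_\alpha z^\alpha$ then place $1$ and each $z_j$ in $A^2(\Omega)$, as in the paper.
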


\begin{lemma}[Sharp moment inequality]\label{lem:moment}
Let $E\subset\R_+^n$ be measurable and suppose
\[
 0<V=|E|<\infty,
 \quad
 m_j=\int_E x_j\,dx\in(0,\infty).
\]
Then
\begin{equation}\label{eq:moment-ineq}
 \frac{V^{n+1}}{\prod_{j=1}^n m_j}
 \leq\frac{(n+1)^n}{n!}.
\end{equation}
Equality holds if and only if, modulo a set of Lebesgue measure zero,
\[
 E=\left\{x\in\R_+^n:\sum_{j=1}^n a_jx_j<1\right\}
\]
for some $a_1,\ldots,a_n>0$.
\end{lemma}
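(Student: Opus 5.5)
We first reduce to the case where all moments are equal, using the scaling $x\mapsto (\lambda_1x_1,\dots,\lambda_nx_n)$ with $\lambda_j>0$. Under this map, $V$ gets multiplied by $\prod\lambda_j$ and $m_j$ by $\lambda_j\prod_k\lambda_k$. The quotient $V^{n+1}/\prod m_j$ is therefore invariant, and so is the class of simplices $\{\sum a_jx_j<1\}$. So we may normalize $m_1=\dots=m_n$. Then $\prod m_j=(\frac1n\sum m_j)^n$, and it suffices to show
\[
V^{n+1}\le \frac{(n+1)^n}{n!}\Bigl(\frac{1}{n}\int_E s(x)\,dx\Bigr)^n,\qquad s(x)=\sum_j x_j .
\]
This is cleaner than applying AM--GM after the fact, since the scaling already handles the weights.

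The core step is a bathtub (rearrangement) argument for the linear function $s$. Among all measurable $E\subset\R_+^n$ with $|E|=V$, the integral $\int_E s$ is minimized by the sublevel set $T_c=\{x\in\R_+^n: s(x)<c\}$ with $|T_c|=c^n/n!=V$. Indeed,
\[
\int_E s-\int_{T_c}s=\int_{E\setminus T_c}(s-c)+\int_{T_c\setminus E}(c-s)\ge 0,
\]
because $|E\setminus T_c|=|T_c\setminus E|$. Both terms are nonnegative, and the sum vanishes iff $|E\triangle T_c|=0$, since $s\neq c$ almost everywhere. Next compute $\int_{T_c}x_j\,dx=c^{n+1}/(n+1)!$, so $\int_{T_c}s=n\,c^{n+1}/(n+1)!$. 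Substituting $c=(n!V)^{1/n}$ gives
\[
\Bigl(\tfrac1n\int_E s\Bigr)^n\ge \frac{c^{n(n+1)}}{((n+1)!)^n}=\frac{(n!)^{n+1}V^{n+1}}{((n+1)!)^n}=\frac{n!}{(n+1)^n}V^{n+1},
\]
which is exactly \eqref{eq:moment-ineq}.

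For equality, we trace back through the reduction. Equality after normalization forces $E=T_c$ up to a null set. Undoing the diagonal scaling turns $T_c$ into a simplex $\{\sum a_jx_j<1\}$ with $a_j>0$. Conversely, the computation above shows that every such simplex attains equality, using the scale invariance again.

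The only delicate point is making sure the hypotheses make the normalization legitimate. Finiteness and positivity of the $m_j$ and of $V$ guarantee that we can choose $\lambda_j=m_j^{-1}\cdot(\text{common factor})$ to equalize the moments. No other step is an obstacle; the main risk is bookkeeping of the constants, which the explicit simplex computation settles.
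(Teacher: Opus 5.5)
Your proof is correct and is essentially the paper's argument. The paper runs the same bathtub comparison directly with the weighted functional $\ell(x)=\sum_j a_jx_j$, where $a_j=V/((n+1)m_j)$. Your diagonal rescaling with $\lambda_j\propto 1/m_j$ just transports that weighted functional to the unweighted sum $s$, so the two computations, including the equality case, coincide.
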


\begin{proof}
Set
\[
a_j=\frac{V}{(n+1)m_j},
\quad
\ell(x)=\sum_{j=1}^{n}a_jx_j.
\]
Then
\[
\int_E \ell(x)\,dx
 =\sum_{j=1}^{n}a_jm_j
 =\frac{nV}{n+1}.
\]
Choose \(r>0\) such that $S_r:=\left\{x\in\mathbb{R}_+^n:\ell(x)<r\right\}$ 
has volume \(V\). Since \(|E|=|S_r|\), we have $|E\setminus S_r|=|S_r\setminus E|.$
Consequently,
\begin{align*}
\int_E\ell(x)\,dx-\int_{S_r}\ell(x)\,dx
&=
\int_{E\setminus S_r}\ell(x)\,dx
-\int_{S_r\setminus E}\ell(x)\,dx\\
&=
\int_{E\setminus S_r}\bigl(\ell(x)-r\bigr)\,dx
+\int_{S_r\setminus E}\bigl(r-\ell(x)\bigr)\,dx \\
&\geq 0,
\end{align*}
since $\ell(x)\geq r $ on $E\setminus S_r$ and 
$ \ell(x)<r$ on $S_r\setminus E.$
A change of variables given by \(y_j=\frac{a_jx_j}{r}\) yields
$ |S_r| =\frac{r^n}{n!\prod_{j=1}^{n}a_j}$
and
$ \int_{S_r}\ell(x)\,dx
=\frac{nr}{n+1}|S_r|
=\frac{nrV}{n+1}.$
It follows that
\[
\frac{nV}{n+1}
=\int_E\ell(x)\,dx
\geq
\int_{S_r}\ell(x)\,dx
=\frac{nrV}{n+1},
\]
and therefore $r\leq 1.$
Moreover, 
\[
V=|S_r|
=\frac{r^n}{n!\prod_{j=1}^{n}a_j}\leq\frac{1}{n!\prod_{j=1}^{n}a_j}\quad{\rm and}\quad
\prod_{j=1}^{n}a_j
=
\frac{V^n}{(n+1)^n\prod_{j=1}^{n}m_j}.
\]
imply
\[
\frac{V^{n+1}}{\prod_{j=1}^{n}m_j}
\leq
\frac{(n+1)^n}{n!}.
\]

Suppose equality holds. Then \(r=1\), and thus 
\[
\int_E\ell(x)\,dx= \int_{S_1}\ell(x)\,dx.
\]
Hence
\[
\int_{E\setminus S_1}\bigl(\ell(x)-1\bigr)\,dx
+
\int_{S_1\setminus E}\bigl(1-\ell(x)\bigr)\,dx
=0.
\]
Therefore $S_1\setminus E$ has Lebesgue measure zero, while
\(E\setminus S_1\) is contained, modulo a set of Lebesgue measure zero, in
the hyperplane $\{x\in\mathbb{R}_+^n:\ell(x)=1\}$, which also has
Lebesgue measure zero.
Therefore 
\[
E=S_1
=
\left\{
x\in\mathbb{R}_+^n:
\sum_{j=1}^{n}a_jx_j<1
\right\}
\]
modulo a set of Lebesgue measure zero. The converse follows by direct
integration.
\end{proof}

\begin{proof}[Proof of Theorem \ref{thm:origin-case}]
Let 
\[
 V_\Omega=\Vol(\Omega),
 \quad
 M_j=\int_\Omega|z_j|^2\,dV(z).
\]
We first prove that \(V_\Omega\) and \(M_j\) are finite.  Since
\(\Omega\) is Reinhardt, the torus action
\[ U_\theta f(z) := f(e^{i\theta_1}z_1,\ldots,e^{i\theta_n}z_n),
\quad
\theta\in[0,2\pi]^n,
\]
is unitary on \(A^2(\Omega)\).  For
\(\alpha\in\mathbb Z_{\geq0}^n\), define the corresponding torus
projection by
\[ P_\alpha f(z) := \frac{1}{(2\pi)^n} \int_{[0,2\pi]^n} e^{-i\alpha\cdot\theta}U_\theta f(z)\,d\theta. \]
If $f(z)=\sum_{\alpha\in\mathbb Z_{\geq0}^n}a_\alpha z^\alpha$
is the Taylor expansion of \(f\) at the origin, then
$P_\alpha f(z)=a_\alpha z^\alpha.$
Moreover, \(P_\alpha\) is an orthogonal projection, and hence
$ \|P_\alpha f\|_{L^2(\Omega)} \leq \|f\|_{L^2(\Omega)}.$

Suppose first that \(V_\Omega=\infty\).  Taking \(\alpha=0\), we have
$P_0f=a_0=f(0).$ 
Since \(P_0f\in A^2(\Omega)\),
\[\|P_0f\|_{L^2(\Omega)}^2 = |f(0)|^2V_\Omega<\infty.\]
This forces $f(0)=0$
for every $f\in A^2(\Omega).$
Therefore \(K_\Omega(0,0)=0\).  This contradicts the assumption
that the Bergman metric is well-defined at the origin.  
Hence $V_\Omega<\infty.$

Now fix \(j\in\{1,\ldots,n\}\) and suppose that \(M_j=\infty\).  For
the multi-index \(e_j\), the torus projection is
$ P_{e_j}f(z) = a_{e_j}z_j = \frac{\partial f}{\partial z_j}(0)\,z_j.$
Consequently,
\begin{align*}
\|P_{e_j}f\|_{L^2(\Omega)}^2 = \left| \frac{\partial f}{\partial z_j}(0) \right|^2 \int_\Omega|z_j|^2\,dV(z)= \left| \frac{\partial f}{\partial z_j}(0) \right|^2M_j.
\end{align*}
Since \(P_{e_j}f\in A^2(\Omega)\), 
 \(M_j=\infty\) implies
$\frac{\partial f}{\partial z_j}(0)=0$ 
for every $f\in A^2(\Omega).$ It then follows from the extremal property that  the Bergman metric is degenerate in the \(z_j\)-direction,
which contradicts to the hypothesis.  Hence
\(M_j<\infty\).  Since \(j\) was arbitrary, we conclude that
 \(M_j\) is finite for every  $1\leq j\leq n$.

Define
\[
 D=\{(|z_1|^2,\ldots,|z_n|^2):z\in\Omega\}\subset\R_+^n.
\]
A straightforward calculation using polar coordinates yields
\begin{equation}\label{eq:polar-moments}
 V_\Omega=\pi^n|D|,
 \quad
 M_j=\pi^n\int_Dx_j\,dx.
\end{equation}
It is a standard fact that holomorphic monomials are mutually orthogonal in
the Bergman space of a Reinhardt domain. Since $0\in\Omega$, a direct
calculation from the definition using the constant and linear terms of the
diagonal Bergman kernel yields
\[
 K_\Omega(0)=\frac1{V_\Omega},
 \quad
 g_{j\bar k}(0)=\delta_{jk}\frac{V_\Omega}{M_j}.
\]
It follows that the Bergman invariant function
$J_\Omega=\frac{\det(g_{j\bar k})}{K_\Omega}$ satisfies
\begin{equation}\label{eq:J-origin}
 J_\Omega(0)
 =\frac{V_\Omega^{n+1}}{\prod_jM_j}
 =\pi^n\frac{|D|^{n+1}}
                  {\prod_j\int_Dx_j\,dx}.
\end{equation}
Lemma~\ref{lem:moment} therefore yields
\[
 J_\Omega(0)\leq\frac{(n+1)^n\pi^n}{n!}.
\]
By Lemma 3.2. in \cite{HHL26} (cf. \cite{FW97, SX25, Y25}), 
$J_\Omega\equiv \frac{(n+1)^n\pi^n}{n!}$. 
Thus equality holds
in Lemma~\ref{lem:moment}, and $D$ agrees almost everywhere with a
simplex
\[ \Sigma:= \left\{x\geq0:\sum_{j=1}^na_jx_j<1\right\}.\]

We first prove that $D$ and $\Sigma$ agree exactly on the strictly
positive orthant $P:=(0,\infty)^n.$
Define
\[
U:=\log(D\cap P),
\quad
V:=\log(\Sigma\cap P),
\]
where $\log(x_1,\ldots,x_n) := (\log x_1,\ldots,\log x_n).$
Since \(x_j=|z_j|^2\), we have
\[
U
=
\left\{
2(\log|z_1|,\ldots,\log|z_n|):
z\in\Omega\cap(\mathbb{C}^{*})^n
\right\}.
\]
Thus \(U\) is open and convex by the logarithmic convexity of the
pseudoconvex Reinhardt domain \(\Omega\). On the other hand,
\[
V
=
\left\{
y\in\mathbb{R}^{n}:
\sum_{j=1}^{n}a_je^{y_j}<1
\right\}.
\]
The function $y\mapsto\sum_{j=1}^{n}a_je^{y_j}$ is continuous and convex,
so $V$ is also open and convex.
Since \(D\) and \(\Sigma\) agree almost everywhere and the logarithm
is locally Lipschitz on \(P\), it follows that \(U\) and \(V\) agree
almost everywhere. They therefore have the same closure: if, for example,
$u\in U\setminus\overline V$, then a small ball about $u$ lies in
$U\setminus V$, contradicting almost-everywhere equality; the reverse
inclusion is identical. Moreover, every nonempty open convex subset of
\(\mathbb{R}^n\) is the interior of its closure (cf. Theorem 6.3 in \cite{R70}).
It follows that \(U=V\) and thus $D\cap P=\Sigma\cap P$.

It remains to recover the coordinate faces. 
Since \(\Omega\) is a pseudoconvex Reinhardt domain containing the
origin, it is a complete Reinhardt domain (cf. Theorem~1.11.13 in
\cite{JP08}). Thus \(D\) is
coordinatewise complete. The following is the standard mechanism for recovering
the coordinate faces (comparing the proof of
\((iii)\Rightarrow(iv)\) in
Theorem 1.11.13, pp. 80--81, \cite{JP08}).
Let \(x\in\Sigma\). If some coordinates of \(x\) vanish, replace them
by sufficiently small positive numbers. Since the defining inequality
of \(\Sigma\) is strict, this produces a point
\(x^{\varepsilon}\in\Sigma\cap P=D\cap P\). Coordinate completeness
then allows us to set those coordinates back equal to zero, and hence
\(x\in D\). Therefore, $\Sigma\subset D.$
Conversely, let \(x\in D\), and choose \(z\in\Omega\) with
\(x_j=|z_j|^2\). By openness, the zero coordinates of \(z\) may be
replaced by sufficiently small nonzero values while remaining in
\(\Omega\). We thus obtain \(x^\nu\in D\cap P=\Sigma\cap P\) with
\(x^\nu\to x\), and consequently $\sum_{j=1}^{n}a_jx_j\leq1.$
We now show equality is impossible. If equality held, then \(x\neq0\);
choosing \(k\) with \(x_k>0\), we could slightly increase
\(|z_k|\), and simultaneously perturb the zero coordinates to nonzero
values, while remaining in the open set \(\Omega\). The resulting
point \(y\in D\cap P\) would satisfy $\sum_{j=1}^{n}a_jy_j>1,$
contradicting \(D\cap P=\Sigma\cap P\). Hence
$\sum_{j=1}^{n}a_jx_j<1,$
so \(x\in\Sigma\). Therefore \(D=\Sigma\).

Finally, the torus invariance implies that  \(\Omega\)
is determined entirely by the coordinate moduli. 
Hence
\[
 \Omega=\left\{z\in\C^n:\sum_{j=1}^na_j|z_j|^2<1\right\},
\]
which is biholomorphic to $\B^n$.
\end{proof}

\section{On bounded Reinhardt domains}\label{sec:bounded}

In this section, we prove the following theorem.

\begin{theorem}\label{bounded}
Let $\Omega\subset\C^n, n\geq 2, $ be a bounded Reinhardt domain with
$C^\infty$-smooth boundary.  If the Bergman metric of $\Omega$ is a 
complete Einstein metric, then $\Omega$ is biholomorphic to the complex unit ball
$\B^n$.
\end{theorem}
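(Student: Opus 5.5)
By Proposition~\ref{prop:sphere}, the tube $M=S+i\R^n$ is a connected, closed, strongly pseudoconvex spherical tube hypersurface. Theorem~\ref{DY} and Remark~\ref{DY2} therefore let us write $L=A(L_\ast)+b$ with $A\in\operatorname{GL}(n,\R)$, $b\in\R^n$, and $L_\ast$ one of $L_{\mathrm I}^{(m)}$, $L_{\mathrm{II}}$, $L_{\mathrm{III}}$. The plan is to compare recession cones. Since $\Omega$ is bounded, Lemma~\ref{lem:recession} gives $\Rec(L)=\operatorname{cone}\{-e_j:j\in I\}$. This is a simplicial cone spanned by $|I|$ linearly independent vectors, so its dimension is exactly $|I|$. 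On the model side, the affine map gives $\Rec(L)=A\,\Rec(L_\ast)$, and each $\Rec(L_\ast)$ can be computed directly:
\begin{itemize}
\item For $L_{\mathrm{III}}$, the recession cone is the closed negative orthant, of dimension $n$.
\item For $L_{\mathrm{II}}$, the constraint $0<x_0<\pi$ forces $d_0=0$, and then $d_\alpha\le 0$ for $\alpha\ge1$. This gives a simplicial cone of dimension $n-1$.
\item For $L_{\mathrm I}^{(m)}$, the directions are those with $d_0\ge0$, $d_\alpha\le0$ for $1\le\alpha\le m$, and $d_\alpha=0$ for $\alpha>m$. This cone has dimension $m+1$. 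It contains $e_0$ together with the $-e_\alpha$, so it is simplicial.
\end{itemize}

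\textbf{Type III.} Here $\Rec(L)$ is $n$-dimensional, so $I=\{1,\dots,n\}$. Every coordinate hyperplane then meets $\Omega$. By relative completeness and the fact that $L$ contains the orthant-type cone, $\Omega$ contains points near the origin, and in fact the full polydisc completion. More precisely, relative completeness with $I=\{1,\dots,n\}$ shows $\Omega$ is complete Reinhardt, so $0\in\Omega$. Theorem~\ref{thm:origin-case} then gives the ball.

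\textbf{Type II and Type I with $m<n-1$.} I will rule these out, or reduce them, as follows. In Type I with $m<n-1$, the base contains a quadratic paraboloid direction. Its tube boundary piece $x_0=\sum x_\alpha^2+\dots$ makes $L$ unbounded only along $e_0$-type directions, which gives a cone of dimension $m+1\le n-1$, and this is fine. So the obstruction has to come from the coordinate hyperplanes instead. For $j\in I$, I will show that $\partial\Omega\cap\{z_j=0\}$ is nonempty and that its smoothness forces the asymptotics of $S$ in the direction $-e_j$ to be of exponential type. Concretely, a torus-invariant smooth defining function near a point with $z_j=0$ is smooth in $|z_j|^2=e^{2x_j}$. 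As $x_j\to-\infty$, the base $S$ must therefore approach a limiting hypersurface with correction terms that are power series in $e^{2x_j}$.

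Under the affine map, a recession generator $-e_j$ corresponds to an edge of $\Rec(L_\ast)$. In Type I, the edge $e_0$ is along a paraboloid/exponential direction in which the boundary behaves like $x_0\sim x_\alpha^2$ or $x_0\sim e^{x_\alpha}$. This is not of the form ``smooth in $e^{2x_j}$'' near the hyperplane, unless the generator is one of the $-e_\alpha$ with $1\le\alpha\le m$. So I must match the generators: $A$ must send each model generator to some $-e_j$, and smoothness at $\{z_j=0\}$ requires exponential decay along it. The generator $e_0$ of Type I never has this property, since along $x_0\to\infty$ the cross-sections grow and do not converge. Hence Type I is impossible for a bounded domain. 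In Type II, the recession generators $-e_\alpha$ do satisfy the decay property, but the cross-section as $x_\alpha\to-\infty$ is a Type II model of lower dimension. In the limit where all $x_\alpha\to-\infty$ it is the interval $0<x_0<\pi$. Its preimage gives $\{\,|z_{j_0}|\text{ bounded between two radii}\}$ restricted to the slice $z_I=0$. That is an annulus-type slice in which the direction with $j_0\notin I$ stays bounded away from $0$. Since the $\sin x_0$ boundary is real analytic in $x_0$ and the other variables enter through $e^{x_\alpha}$, i.e. analytically in $|z_\alpha|$, the boundary of $\Omega$ should turn out real analytic. For Type II I therefore plan to verify real analyticity of $\partial\Omega$ directly. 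The defining function $\sum_\alpha e^{x_\alpha}-\sin x_0$ pulls back, after the linear change $A$, to a function real analytic in $\log|z_{j_0}|$ and in powers of $|z_j|$. Real analyticity then holds on the hyperplanes $z_j=0$ provided the exponents coming from $A$ are even positive integers, which the smoothness at $\{z_j=0\}$ should force. Theorem~\ref{hhl2} would then give the ball, which in fact contradicts Type II, so Type II does not occur.

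The main obstacle is the rigorous matching step: proving that $C^\infty$ smoothness of $\partial\Omega$ across $\{z_j=0\}$ forces the affine map $A$ to send model exponential directions to coordinate directions with the correct (even integer) scaling, and rules out the $e_0$ edge of Type I. My first attempt will restrict to two-dimensional slices, fixing all but $x_j$ and one other coordinate. In such a slice I will compare the explicit model curve with the requirement that the boundary curve, written in the variable $t=e^{2x_j}$, extends smoothly to $t=0$ with nonzero derivative.
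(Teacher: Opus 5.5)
\textbf{The gap is in Type I.} Your extreme-ray matching is correct and gives $Ae_0=-c\,e_j$ for some $j\in I$ and $c>0$. The contradiction you then appeal to is that $C^\infty$ smoothness across $\{z_j=0\}$ ``requires exponential decay'' along $-e_j$, a property the edge $e_0$ lacks. That principle is never stated precisely and is not proved, and it is not what actually excludes Type I. The missing idea is simply that $\Omega$ is bounded. Each original coordinate $y_k$ is bounded above on $L$. For every $x'\in\R^{n-1}$, the point $(x_0,x')$ lies in $L^{(m)}_{\mathrm I}$ once $x_0$ is large. Writing $y_k=a_{k0}x_0+a_k'\cdot x'+b_k$, this forces $a_{k0}\le 0$. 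Moreover $a_{k0}=0$ would force $a_k'=0$, i.e.\ a zero row of $A$. Hence every entry of $Ae_0$ is strictly negative, which is incompatible with $Ae_0=-c\,e_j$ because $n\ge2$.

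This is the rigorous form of your remark that the cross-sections ``grow and do not converge'': the projection of $L$ along $e_j$ would be all of $\R^{n-1}$, which boundedness forbids. The paper uses the same sign information to finish differently. Along $\partial L^{(m)}_{\mathrm I}$ with $F(x')\to\infty$ and $|x'|=o(F(x'))$, all $y_k\to-\infty$, so $0\in\partial\Omega$, contradicting Lemma~\ref{lem:origin-not-boundary}. No smoothness across coordinate hyperplanes is needed here. This argument also covers every $m$, including $m=n-1$, which your case split leaves implicit.

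The rest of your plan is sound and follows the paper's architecture. Your Type III argument is a valid and slightly slicker variant of the paper's. The bounded case of Lemma~\ref{lem:recession} forces $|I|=n$. Relative completeness then makes $\Omega$ a complete Reinhardt domain, so $0\in\Omega$ and Theorem~\ref{thm:origin-case} applies. The paper instead shows $A$ has nonnegative entries and follows the ray $x=(-t,\dots,-t)$.

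For Type II, the step ``smoothness should force even exponents'' must be done at the right place. The paper first proves the exact description $\Omega=\mathcal D$, including points with some $z_\alpha=0$. From this it deduces that the endpoint orbit $E_0=\{t=0,\ z'=0\}$ lies in $\partial\Omega$. At points of $E_0$, torus invariance kills $d\delta$ in every $z_\alpha$-direction and in $\theta_0$, so near $E_0$ the boundary is a graph $t=h(z')$. Restricting to a single $z_\alpha$-line then shows $|z_\alpha|^{q_\alpha}$ is $C^\infty$, hence $q_\alpha\in 2\mathbb N$.

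Your closing remark that Theorem~\ref{hhl2} then contradicts Type II is true: such a domain is not simply connected, since $z_0$ ranges in an annulus. It is unnecessary, though, since the ball conclusion is all the theorem requires.
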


\begin{proof}
We re-index the original coordinates as
$z_0,\dots,z_{n-1}$, with $y_j=\log|z_j|$. 
By Proposition \ref{prop:connected} and Proposition \ref{prop:sphere}, $M$ is a connected closed strongly pseudoconvex
spherical tube hypersurface.
 After an invertible real affine
transformation
\begin{equation}\label{eq:affine-change}
 y=Ax+b,
\end{equation}
the hypersurface $A^{-1}(S-b)$ is one of three  models in Theorem \ref{DY} by \cite{DY85}.
Let
$\widetilde L=A^{-1}(L-b)$
and $H=\partial\widetilde L=A^{-1}(S-b).$
Let \(Q\) be the connected component of
\(\mathbb R^n\setminus H\) containing \(\widetilde L\). If $\widetilde L\neq Q$, then its relative boundary in the connected
set $Q$ would be nonempty, producing a point of
$\partial\widetilde L\setminus H$, a contradiction.  Thus
$\widetilde L=Q$ and hence $L=AQ+b.$ 
By Remark \ref{DY2}, the convexity of
\(\widetilde L=Q\) determines the relevant side of each
Dadok--Yang model.  Thus \(Q\) is one of $L_{\mathrm I}^{(m)}, L_{\mathrm{II}}, L_{\mathrm{III}}.$
We analyze these three possibilities separately.

\medskip

The Type I model is
\begin{equation*}
L^{(m)}_{\rm I} = \left\{x\in\mathbb{R}^n: x_0>F(x'):=\sum_{\alpha=1}^{m}e^{x_\alpha}
              +\sum_{\alpha=m+1}^{n-1}x_\alpha^2 \right\},
 \quad 0\leq m\leq n-1.
\end{equation*}
Every original logarithmic coordinate $y_j$ is bounded above on $L$.
Write
\[
 y_j=a_{j0}x_0+a'_j\cdot x'+b_j,
\]
where $x'=(x_1,\ldots,x_{n-1})$.
Letting $x_0\to+\infty$ and keeping $x'$ fixed yields $a_{j0}\leq0$.  If
$a_{j0}=0$, the fact that $x'$ ranges over all of $\R^{n-1}$ would force
$a'_j=0$, making a row of $A$ zero.  Therefore
$ a_{j0}<0$ for every $j$.
On the boundary $x_0=F(x')$, choose a path satisfying
$ F(x')\rightarrow\infty,  |x'|=o(F(x')).$
If $m>0$, one may take $x_1=t$ in an exponential direction and keep
the other coordinates fixed, yielding $F\sim e^t$.  If $m=0$, take
$x_1=t$ in a quadratic direction, yielding $F\sim t^2$.  Since $a_{j0}<0$, every $y_j$ tends to $-\infty$.  The
corresponding points of $\partial\Omega\cap(\C^*)^n$ therefore tend
to the origin.  Hence $0\in\partial\Omega$, contrary to
Lemma~\ref{lem:origin-not-boundary}. Therefore Type I is impossible.

The Type III model is
\[
L_{\mathrm{III}}
=
\left\{
x\in\mathbb{R}^n:
\sum_{\alpha=0}^{n-1}e^{x_\alpha}<1
\right\}.
\]
If \(d_\alpha\leq0\) for every \(\alpha\), then, for every
\(x\in L_{\mathrm{III}}\) and \(t\geq0\),
\[
\sum_{\alpha=0}^{n-1}e^{x_\alpha+td_\alpha}
\leq
\sum_{\alpha=0}^{n-1}e^{x_\alpha}
<1.
\]
Hence \(x+td\in L_{\mathrm{III}}\), so
\(-\mathbb{R}_+^n\subseteq\operatorname{recc}(L_{\mathrm{III}})\).
Conversely, if \(d_k>0\) for some \(k\), then for any
\(x\in L_{\mathrm{III}}\),
$e^{x_k+td_k}\rightarrow+\infty$ as $t\to+\infty$.
Consequently, \(x+td\notin L_{\mathrm{III}}\) for all sufficiently
large \(t\). Thus \(d\notin\operatorname{recc}(L_{\mathrm{III}})\).
This shows that $\operatorname{recc}(L_{\mathrm{III}})=-\R_+^n$.  Since every $y_j$ in
\eqref{eq:affine-change} is bounded above, testing the recession direction 
$-e_\alpha$ yields $A_{j\alpha}\geq0$ for all $j,\alpha.$
Every row of $A$ is nonzero, so every row sum is positive.  Along
$x=(-t,\dots,-t) \in L_{\mathrm{III}}$ for large $t$, all
$y_j\to-\infty$.  Thus $0\in\overline\Omega$.  By
Lemma~\ref{lem:origin-not-boundary}, $0\notin\partial\Omega$, so
$0\in\Omega$. It thus follows from Theorem \ref{thm:origin-case} that $\Omega$
is biholomorphic to $\B^n$.

Now consider the type II  model 
\begin{equation}\label{eq:type-II}
 L_{\rm II} = \left\{x\in\mathbb{R}^n:
 0<x_0<\pi,
 \quad
 \sum_{\alpha=1}^{n-1}e^{x_\alpha}<\sin x_0\right\}.
\end{equation}
We first show that $\operatorname{recc}(L_{\mathrm{II}}) = \{0\}\times\mathbb{R}_{-}^{n-1}.$
Let $d=(0,d_1,\ldots,d_{n-1}), d_\alpha\leq0.$
For every \(x=(x_0,x')\in L_{\mathrm{II}}\) and \(t\geq0\), the
\(x_0\)-coordinate is unchanged, while
$e^{x_\alpha+td_\alpha}\leq e^{x_\alpha}$
for each $\alpha=1,\ldots, n-1.$
Consequently,
\(x+td\in L_{\mathrm{II}}\) for every \(t\geq0\), proving
$ \{0\}\times\mathbb{R}_{-}^{{n-1}} \subseteq \operatorname{recc}(L_{\mathrm{II}}).$
Conversely, let
$ d=(d_0,d_1,\ldots,d_{n-1}) \in\operatorname{recc}(L_{\mathrm{II}}).$
For any \(x\in L_{\mathrm{II}}\), we must have
$0<x_0+td_0<\pi $ for every $t\geq0$. This forces $d_0=0.$
Suppose that \(d_k>0\) for some \(k\in\{1,\ldots, n-1\}\). Then
$e^{x_k+td_k}\rightarrow+\infty$ as $t\to+\infty,$
whereas \(\sin x_0\) remains fixed. This is impossible. It follows that
$ d\in\{0\}\times\mathbb{R}_{-}^{n-1}$ and thus
$\operatorname{recc}(L_{\mathrm{II}}) =\{0\}\times\mathbb{R}_{-}^{n-1}.$

By Lemma \ref{lem:recession} and $\operatorname{recc}(L) = A\bigl(\operatorname{recc}(L_{\mathrm{II}})\bigr)$,
the cone \(A(\operatorname{recc}(L_{\mathrm{II}}))\) is a coordinate
cone. Since
$\dim \operatorname{recc}(L_{\mathrm{II}})=n-1,$
and \(A\) is invertible, Lemma \ref{lem:recession} implies that
$\#I=n-1$.
Thus \(\Omega\) meets exactly \(n-1\) coordinate hyperplanes.
After permuting the \(y\)-coordinates, we may assume that
\(I=\{1,\ldots,n-1\}\). Thus \(A\) yields a linear isomorphism
\[
A\colon
\operatorname{cone}\{-e_1,\ldots,-e_{n-1}\}
\rightarrow
\operatorname{cone}\{-e_1,\ldots,-e_{n-1}\}.
\]
An invertible linear map taking one convex cone onto another sends
extreme rays to extreme rays. Since the extreme rays of both cones are the rays
$\mathbb{R}_{+}(-e_1),\ldots,\mathbb{R}_{+}(-e_{n-1})$ and  the Type II model is symmetric in \(x_1,\ldots,x_{n-1}\), we may
relabel these coordinates such that the affine transformation
\(y=Ax+b\) has the form
\begin{equation}
\label{eq:type-II-affine}
\begin{aligned}
y_0&=a_0x_0+b_0,
    &&a_0\neq0,\\
y_\alpha&=a_\alpha x_0+\lambda_\alpha x_\alpha+b_\alpha,
    &&\alpha=1,\ldots, n-1,
    \quad \lambda_\alpha>0.
\end{aligned}
\end{equation}
Let
\[
 t=\frac{\log|z_0|-b_0}{a_0}=x_0.
\]
Since $0<t<\pi$, the coordinate $z_0$ lies in an annulus whose closure
is compact and disjoint from zero.  On
$\Omega\cap(\C^*)^n$, substitution of
\eqref{eq:type-II-affine} into \eqref{eq:type-II} yields
\begin{equation}\label{eq:type-II-domain}
 0<t<\pi,
 \quad
 \sum_{\alpha=1}^{n-1}
 C_\alpha e^{-\frac{a_\alpha}{\lambda_\alpha} t}
 |z_\alpha|^{q_\alpha}<\sin t,
\end{equation}
where $C_\alpha=e^{-\frac{b_\alpha}{\lambda_\alpha}}>0, q_\alpha=\frac1{\lambda_\alpha}>0$.

Define $t(z):=\frac{\log |z_0|-b_0}{a_0}, \Phi(z):= \sum_{\alpha=1}^{n-1} C_\alpha e^{-\frac{a_\alpha}{\lambda_\alpha}t(z)} |z_\alpha|^{q_\alpha}.$
Let
\[
\mathcal D:=
\left\{
z\in\mathbb C^n:
z_0\neq 0,\quad
0<t(z)<\pi,\quad
\Phi(z)<\sin t(z)
\right\},
\]
where zero values among \(z_1,\ldots,z_{n-1}\) are allowed. 
It follows from the argument above that  $ \Omega\cap(\mathbb C^*)^n=\mathcal D\cap(\mathbb C^*)^n.$
We first prove that \(\mathcal D\subset\Omega\). Let \(z\in\mathcal D\)
and let $J=\{\alpha\in\{1,\ldots, n-1\}:z_\alpha=0\}.$
For every sufficiently small \(\varepsilon>0\), replace each zero
coordinate \(z_\alpha\), \(\alpha\in J\), by a nonzero number
\(z_\alpha^{(\varepsilon)}\) satisfying $0<|z_\alpha^{(\varepsilon)}|<\varepsilon,$
and leave the remaining coordinates unchanged. Denote the resulting
point by \(z^{(\varepsilon)}\). 
By continuity, $z^{(\varepsilon)} \in \mathcal D\cap(\mathbb C^*)^n = \Omega\cap(\mathbb C^*)^n$ 
for all sufficiently small \(\varepsilon>0\). 
Using the relative
completeness of the pseudoconvex Reinhardt domain $\Omega$ in the coordinate
directions belonging to \(I=\{1,\ldots,n-1\}\), we obtain \(z\in\Omega\) and thus $\mathcal D\subset\Omega.$
On the other hand, 
let \(z\in\Omega\). Since
\(\Omega\) does not meet the coordinate hyperplane \(\{z_0=0\}\), we
have \(z_0\neq0\), so \(t(z)\) is well defined. If some of
\(z_1,\ldots,z_{n-1}\) vanish, 
by perturbing $z$, 
there exists a sequence $ z^{(\nu)} \in \Omega\cap(\mathbb C^*)^n = \mathcal D\cap(\mathbb C^*)^n$
such that $ z^{(\nu)}\rightarrow z.$
Passing to the limit, we obtain
$ 0\leq t(z)\leq\pi, \Phi(z)\leq\sin t(z).$
If \(t(z)=0\) or \(t(z)=\pi\), we may perturb $z$ to get $\tilde z \in \Omega\cap(\mathbb C^*)^n$ while $t(\tilde z) <0$ or   $t(\tilde z) >\pi$, which is impossible. This implies that $0 < t(z) < \pi$. Moreover, suppose, to the contrary, that $\Phi(z)=\sin t(z).$
If \(z_\beta\neq0\) for some \(\beta\in\{1, \ldots, n-1\}\), replace
\(z_\beta\) by $z_\beta^{(\varepsilon)}:=
(1+\varepsilon)z_\beta$  and leave all other coordinates unchanged. 
For sufficiently small
\(\varepsilon>0\),  \(z^{(\varepsilon)} \in \Omega \)  by openness. Moreover, \(t(z^{(\varepsilon)})=t(z)\),
whereas \(q_\beta>0\) implies
\[ \Phi(z^{(\varepsilon)}) = \Phi(z) + C_\beta e^{-\frac{a_\beta}{\lambda_\beta} t(z)} \left((1+\varepsilon)^{q_\beta}-1\right) |z_\beta|^{q_\beta} > \Phi(z) = \sin t(z) = \sin t(z^{(\varepsilon)}). \]
This contradicts the inequality
$\Phi(w)\leq\sin t(w)$ established above for every $w\in\Omega$.
In addition, if $z_1=\cdots=z_{n-1}=0,$
then $\Phi(z)=0<\sin t(z)$ since \(0<t(z)<\pi\).
Therefore,  in every case \(z\in\mathcal D\) by the definition.
To conclude, we obtain that \(\mathcal D = \Omega\).
By the argument above, we point out that 
the endpoint orbits $ E_0=\{|z_0|=e^{b_0},\ z'=0\},  E_\pi=\{|z_0|=e^{a_0\pi+b_0},\ z'=0\}$, corresponding to $t(z)=0$ or $t(z)=\pi$, 
belong to $\partial\Omega$.

Fix a point $p\in E_0$.
Near \(p\), we use the smooth coordinates $(\theta_0,t,z'), z_0=e^{a_0t+b_0}e^{i\theta_0}, z'=(z_1,\ldots,z_{n-1}).$
Let \(\delta\) be a smooth torus-invariant defining function for
\(\Omega\) near \(p\), so that $\Omega=\{\delta<0\},  \partial\Omega=\{\delta=0\},  d\delta(p)\neq0.$
For every \(\alpha\in\{1,\ldots, n-1\}\), rotations in the
\(z_\alpha\)-plane fix \(p\), since \(z_\alpha(p)=0\). The invariance
of \(\delta\) under these rotations implies that the restriction of
\(d\delta(p)\) to the \(z_\alpha\)-plane is invariant under all planar
rotations. Since the only rotation-invariant real linear functional
on this plane is zero, all the components of \(d\delta(p)\) in the
\(z_\alpha\)-directions vanish. The \(S^1\)-invariance of \(\delta\) under rotations in the
\(z_0\)-variable implies that \(\delta\) is independent of
\(\theta_0\), yielding $\frac{\partial\delta}{\partial\theta_0}(p)=0.$
It follows that $\frac{\partial\delta}{\partial t}(p)\neq0.$
It follows from the implicit function theorem that
	 the boundary $\partial\Omega$
near \(p\) is defined by  $ t=h(\theta_0,z'),$
where \(h\in C^\infty\) and $h(\theta_0,0)=0.$
It follows from the torus
 invariance that \(h\) is independent of \(\theta_0\), so we
write simply $t=h(z').$
Fix \(\alpha\in\{1,\ldots, n-1\}\) and set $z_\gamma=0$ for every $\gamma\neq\alpha.$
Using the boundary equation, we obtain
\[
\sin h(z_\alpha)
=
C_\alpha
e^{-\frac{a_\alpha}{\lambda_\alpha} h(z_\alpha)}
|z_\alpha|^{q_\alpha}.
\]
Equivalently,
\[
|z_\alpha|^{q_\alpha}
=
C_\alpha^{-1}
e^{\frac{a_\alpha}{\lambda_\alpha} h(z_\alpha)}
\sin h(z_\alpha).
\]
The function
$s\mapsto C_\alpha^{-1}e^{\frac{a_\alpha}{\lambda_\alpha}s}\sin s$ is real
analytic and has nonzero derivative at zero. Hence $|z_\alpha|^{q_\alpha}$
is smooth at $z_\alpha=0$. A radial function
$|z|^q=(x^2+y^2)^{\frac{q}{2}}$ is $C^\infty$ at the origin exactly when $q$ is a
nonnegative even integer. Since $q_\alpha>0$, it follows that
$q_\alpha=2m_\alpha$ for some $m_\alpha\in\mathbb N$ with $m_\alpha\ge1$.

Now let
\begin{equation}\label{eq:analytic-defining-function}
 \rho(z)=
 \sum_{\alpha=1}^{n-1}
 C_\alpha e^{-\frac{a_\alpha }{\lambda_\alpha}t}
 |z_\alpha|^{2m_\alpha}-\sin t,
 \quad
 t=\frac{\log|z_0|-b_0}{a_0}.
\end{equation}
Because $z_0$ stays away from zero, $t$ is real analytic in the real
and imaginary parts of $z_0$.  
It follows that $\rho$ is real
analytic near the entire boundary of $\Omega$.  
We also verify that the
 gradient of $\rho$  does not vanish on $\{\rho=0\}$. 
If $t=0$ or $\pi$, all $z_\alpha$ vanish and
$\partial_t\rho=-\cos t\neq0$.  If $0<t<\pi$ and $\rho=0$, then 
$z_\beta\neq0$ for at least one $\beta$.  In real coordinates on that
$z_\beta$-plane,
$ \nabla_{z_\beta}\rho
 =2m_\beta C_\beta e^{-\frac{a_\beta}{\lambda_\beta} t}
   |z_\beta|^{2m_\beta-2}
   (\operatorname{Re}z_\beta,\operatorname{Im}z_\beta)\neq0.$
 Moreover, on the exterior side of \(E_0\), where \(t<0\), we
have
\[
\rho(z)
=
\sum_{\alpha=1}^{n-1}
C_\alpha e^{-\frac{a_\alpha}{\lambda_\alpha} t}
|z_\alpha|^{2m_\alpha}
-\sin t
>0.
\]
Similarly, on the exterior side of \(E_\pi\), where \(t>\pi\), we
again have  \(\rho(z)>0\). Thus, near either
endpoint orbit, the condition \(\rho<0\) automatically forces
\(0<t<\pi\). Consequently, $\rho$ is a  local defining function  for \(\Omega\) near $E_0$ and $E_\pi$.
  Therefore $\Omega$ is a
bounded real-analytic pseudoconvex domain.
It then follows from Theorem 1.1 in \cite{HHL26} that
$\Omega$ is biholomorphic to $\B^n$.
This completes the proof of Theorem \ref{bounded}.
\end{proof}

\section{On unbounded Reinhardt domains}\label{sec:unbounded}

In this section, we assume that  $\Omega$ is unbounded and will prove the following theorem.

\begin{theorem}\label{unbounded}
Let $\Omega\subset\C^n, n \geq 2,$ be an unbounded Reinhardt domain with
$C^\infty$-smooth boundary.  Then the Bergman metric of $\Omega$ cannot be a
complete Einstein metric.
\end{theorem}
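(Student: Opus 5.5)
We argue by contradiction and assume that $\Omega$ is unbounded and that its Bergman metric is well defined, complete and Einstein. The first step is to exclude the origin. If $0\in\Omega$, Theorem~\ref{thm:origin-case} shows that $\Omega$ is an ellipsoid, which is bounded. Hence $0\notin\Omega$, and by Lemma~\ref{lem:origin-not-boundary} we get $0\notin\overline\Omega$. Next, exactly as in the proof of Theorem~\ref{bounded}, Propositions~\ref{prop:connected} and~\ref{prop:sphere} together with Theorem~\ref{DY} give an affine map $y=Ax+b$ with $L=AQ+b$, where $Q\in\{L^{(m)}_{\mathrm I},L_{\mathrm{II}},L_{\mathrm{III}}\}$. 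The recession cones of the models are then computed as before:
\[
\Rec(L_{\mathrm I}^{(m)})=\R_+e_0+\operatorname{cone}\{-e_1,\ldots,-e_m\},\quad
\Rec(L_{\mathrm{II}})=\{0\}\times\R_-^{n-1},\quad
\Rec(L_{\mathrm{III}})=-\R_+^n .
\]
Unboundedness of $\Omega$ means that some coordinate $y_j$ is unbounded above on $L$. Equivalently, $A$ maps some recession direction of $Q$ to a vector with a positive entry, or, when $Q$ has bounded projections, some $y_j$ still escapes to $+\infty$ along the boundary.

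The second step turns the existence of the Bergman metric into constraints on $A$. For a Reinhardt domain, $A^2(\Omega)$ is spanned by the Laurent monomials $z^\alpha$ with
\[
\int_L e^{2\langle \alpha+\one,y\rangle}\dd y<\infty .
\]
This is only possible when $\langle\alpha+\one,d\rangle<0$ for all $d\in\Rec(L)\setminus\{0\}$. The positivity of $K_\Omega$ and of $(\partial_j\partial_{\bar k}\log K_\Omega)$ forces the set $\Gamma$ of admissible exponents to be large enough that the vectors $\alpha-\beta$, for $\alpha,\beta\in\Gamma$, span $\R^n$. I would then use this, together with Lemma~\ref{lem:recession} (a recession direction $-e_j$ whenever $\Omega$ meets $\{z_j=0\}$), to pin down the signs of the entries of $A$ in each case. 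The smoothness trick from the Type II argument of Theorem~\ref{bounded} should also carry over: at boundary points lying on coordinate hyperplanes, the exponents $1/\lambda_\alpha$ must be even integers. With these constraints, Type III should reduce to a domain whose closure contains $0$ or which is bounded, which is a contradiction. Type II should reduce, as in Section~\ref{sec:bounded}, to an explicit family in which unboundedness can only come from $a_0>0$ with $z_0$ in an annulus. I expect this family is in fact bounded, so it too is excluded.

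The main obstacle is Type I, in particular $m=0$ with $L=A\{x_0>|x'|^2\}+b$, where genuinely unbounded smooth examples exist. For these I would compute $K_\Omega$ explicitly in logarithmic coordinates as $\sum_{\alpha\in\Gamma}e^{2\langle\alpha,y\rangle}/\|z^\alpha\|^2$. The norms are Gaussian-type integrals over a paraboloid (respectively exponential-type integrals when $m>0$), so they are computable in closed form. Two routes are available to reach a contradiction. The first is to show that along the recession direction $Ae_0$, on which some $|z_j|\to\infty$, the function $\log K_\Omega$ is asymptotically affine in $x_0$ in the relevant directions. This would make a ray to infinity have finite Bergman length, contradicting completeness. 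The second, if the first fails, is to test the identity \eqref{eq:Jconstant} at suitable interior points and show that it is incompatible with the discrete lattice sum over $\Gamma$, as opposed to the continuous Laplace transform that would make a model domain Einstein. I would try the completeness argument first, since it uses only the growth of $K_\Omega$ along the recession ray.
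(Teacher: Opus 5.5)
Your opening reduction ($0\notin\overline\Omega$, $L=AQ+b$, the model recession cones) matches the paper. The case analysis after it has genuine gaps. For Types II and III your predicted outcomes are false. The normal form \eqref{eq:type-II-affine} and the even-exponent argument of Section~\ref{sec:bounded} rest on the equality $\Rec(L)=\operatorname{cone}\{-e_j:j\in I\}$ in Lemma~\ref{lem:recession}, which needs boundedness. For unbounded $\Omega$ only the inclusion survives, and the signs of $A$ are not pinned down. Here are two smooth unbounded counterexamples:
\begin{itemize}
\item $\{(z_0,z_1)\in(\C^*)^2:|z_0|^{-1}+|z_1|^{-1}<1\}$ is of Type III ($A=-I$) with $0\notin\overline\Omega$;
\item $\{1<|z_0|<e^{\pi},\ |z_1|\sin(\log|z_0|)>1\}$ is of Type II ($A=\operatorname{diag}(1,-1)$).
\end{itemize}
In both, $I=\varnothing$, $\mathcal P-\mathcal P$ spans $\R^2$, and every constraint you list holds. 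So neither ``bounded or $0\in\overline\Omega$'' nor ``the Type II family is bounded'' can be derived.

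What excludes such domains is incompleteness via Lemma~\ref{lem:rational-ray}. A \emph{rational} recession direction $v$ with a positive entry makes $\alpha\cdot v$ range over a discrete set bounded above. Then $\frac{d^2}{dt^2}\log\mathcal K(y_0+tv)$ decays exponentially, and the escaping ray has finite Bergman length. For Type III, $L\subset K'+\Rec(L)$ together with unboundedness gives a recession direction with a positive entry. Since $\Rec(L)$ is full-dimensional, you can perturb it to a rational one. For Type II, $\Rec(L)=AC_{\mathrm{II}}$ is only $(n-1)$-dimensional and could a priori span an irrational hyperplane, so an extra rationality input is needed.

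Your Type I route 1 is the same mechanism, with the same defect: it needs the escaping recession direction to be rational. Take $L$ a paraboloid with irrational axis $u\propto(1,\sqrt2)$. Then $\mathcal P=\{\alpha\in\Z^2:(\alpha+\one)\cdot u<0\}$, and the values $\alpha\cdot u$ accumulate at $-\one\cdot u$ without attaining it. So there is no spectral gap and no exponential decay, and route 2 names no mechanism at all.

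The missing idea is to exploit the affine symmetries of the Dadok--Yang model. An automorphism of $Q$ conjugates to $\phi(y)=By+d$ on $L$. When $Be_j=e_j$ for $j\in I$, it lifts to a biholomorphism of the partial logarithmic cover $\widehat\Omega$. That lift is an isometry of the lifted Bergman metric by uniqueness of complete K\"ahler--Einstein metrics (Lemma~\ref{lem:toric-extension}). Comparing the exponential series in $\mathcal K(By+d)=e^{2\ell\cdot y+c_0}\mathcal K(y)$ then gives $B^T(\mathcal P-\mathcal P)=\mathcal P-\mathcal P$ (Lemma~\ref{lem:spectrum-rigidity}).

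For Type I, the dilations $\delta_s$ of $L^{(m)}_{\mathrm I}$ (linear part $D_s=\operatorname{diag}(s^2,I_m,sI_{n-1-m})$) give a continuous family $B_s=AD_sA^{-1}$. This family preserves the discrete spanning set $\mathcal P-\mathcal P$, which forces $B_s=I$ and contradicts $D_s\neq I$. The hypothesis $B_se_j=e_j$ holds because $q^{(j)}=-A^{-1}e_j$ has $q^{(j)}_0=0$: if $q^{(j)}_0>0$, then $\{z_j=0\}\subset\Omega$ and hence $0\in\Omega$ (Lemma~\ref{lem:vertical-origin}).

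For Type II, the reflection $x_0\mapsto\pi-x_0$ gives $B\in\operatorname{GL}(n,\Q)$. Hence $\operatorname{span}\Rec(L)=\ker(B-I)$ is rational. A rational vector in $\operatorname{relint}\Rec(L)$ with a positive entry then exists, and Lemma~\ref{lem:rational-ray} applies to it.
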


It follows from Theorem~\ref{thm:origin-case} that 
$ 0\notin\Omega.$
We shall eliminate all three models in Theorem \ref{DY}.

\subsection{Some preparation}

The set $ \mathcal P=\{\alpha\in\Z^n:z^\alpha\in A^2(\Omega)\}$ of $L^2$-allowable indices for the Reinhardt domain is introduced in \cite{EM26}.
Note that the square-integrable monomials form an orthogonal basis for the Bergman space $A^2(\Omega)$. By the torus symmetry, $z^\alpha, z^\beta$ are orthogonal for $\alpha, \beta \in \mathcal P, \alpha \not= \beta$. 
Denote $ c_\alpha=\|z^\alpha\|^{-2}, y_j=\Log |z_j|, y \in L = \Log \Omega$. 
The  Bergman kernel
$ \mathcal K(y):=K_\Omega(z,z)= \sum_{\alpha\in\mathcal P} c_\alpha e^{2\alpha\cdot y},$ together with all of its
derivatives, 
 converges locally normally on  \(\Omega\cap(\mathbb C^*)^n\). 
Let \(\one=(1,\ldots,1)\).  It follows from the straightforward calculation using 
polar coordinates that
\begin{equation}\label{eq:monomial-norm}
c_\alpha^{-1} =\|z^\alpha\|_{L^2(\Omega)}^2
 =
 (2\pi)^n
 \int_L e^{2(\alpha+\one)\cdot y}\,dy < \infty
\end{equation}
for every \(\alpha\in\mathcal P\).

\begin{lemma}
\begin{equation}\label{metric}
(\omega_\Omega)_{j\overline k}(z)
=
\frac{1}{4z_j\overline z_k}
\frac{\partial^2 \log \mathcal K}
{\partial y_j\partial y_k}(y)
\end{equation}
for any $z \in \Omega\cap(\mathbb C^*)^n$.
\end{lemma}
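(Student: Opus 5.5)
The formula is a direct chain-rule computation in logarithmic coordinates. The plan is to use the fact that, on $\Omega\cap(\C^*)^n$, the diagonal kernel $K_\Omega(z,z)$ depends only on the moduli $|z_j|$, so that $\log K_\Omega(z,z)=\log\mathcal K(y)$ with $y_j=\log|z_j|=\tfrac12\log(z_j\overline z_j)$. First we justify that $\mathcal K$ is smooth and positive on $L$, so that $\log\mathcal K$ is a smooth function of $y$. Positivity holds because the Bergman metric is assumed well defined. Smoothness follows from the local normal convergence of $\sum_{\alpha\in\mathcal P}c_\alpha e^{2\alpha\cdot y}$ together with all its derivatives, recorded just above the statement. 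Then $\log K_\Omega = (\log\mathcal K)\circ y(z)$ with $y$ a smooth function of $z$ on $(\C^*)^n$.

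Next we compute the first derivatives. Since $y_k=\tfrac12(\log z_k+\log\overline z_k)$ locally, which is valid because $z_k\neq0$ and any branch of the logarithm suffices, we get
\[
\frac{\partial y_k}{\partial \overline z_m}=\frac{\delta_{km}}{2\overline z_k},\qquad \frac{\partial y_k}{\partial z_j}=\frac{\delta_{kj}}{2 z_k}.
\]
The chain rule then gives $\partial_{\overline z_k}\log K_\Omega=\frac{1}{2\overline z_k}\,\partial_{y_k}\log\mathcal K$.

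Now we apply $\partial_{z_j}$. The factor $\frac{1}{2\overline z_k}$ is antiholomorphic, so it is annihilated by $\partial_{z_j}$ and passes through. Applying the chain rule once more to $\partial_{y_k}\log\mathcal K$ yields $\frac{1}{2z_j}\,\partial_{y_j}\partial_{y_k}\log\mathcal K$. Multiplying the two factors gives exactly \eqref{metric}, where $(\omega_\Omega)_{j\overline k}$ denotes the coefficient $\partial_j\partial_{\bar k}\log K_\Omega$, matching the normalization $\omega_\Omega=\sqrt{-1}\partial\bar\partial\log K_\Omega$ used for $J_\Omega$.

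There is no real obstacle here. The only points requiring care are the following.

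(i) Interchanging differentiation with the series, which is covered by the stated locally normal convergence of all derivatives.

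(ii) Noting that $\partial_{z_j}(1/\overline z_k)=0$, which is why no additional first-order term appears.

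(iii) Checking that the multivaluedness of $\log z_k$ is harmless, since only the real quantity $\log|z_k|$ enters.
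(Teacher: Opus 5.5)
Your proposal is correct and follows essentially the paper's proof: both compute $\partial y_\ell/\partial z_j=\delta_{j\ell}/(2z_j)$ and $\partial y_\ell/\partial\overline z_k=\delta_{k\ell}/(2\overline z_k)$ and then apply the chain rule. Your observation that $\partial_{z_j}(1/\overline z_k)=0$ plays the same role as the paper's remark that $\log|z_\ell|$ is pluriharmonic, so $\partial^2 y_\ell/\partial z_j\partial\overline z_k=0$, which removes the extra chain-rule term.
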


\begin{proof}
By $y_\ell=\log|z_\ell| =\frac12\log(z_\ell\overline z_\ell),$
we have
\[
\frac{\partial y_\ell}{\partial z_j}
 =
\frac{\delta_{j\ell}}{2z_j},
\quad
\frac{\partial y_\ell}{\partial\overline z_k}
 =
\frac{\delta_{k\ell}}{2\overline z_k}.
\]
Moreover, \(\log|z_\ell|\) is pluriharmonic on
\(\{z_\ell\neq0\}\), and therefore $\frac{\partial^2y_\ell}{\partial z_j\partial\overline z_k}=0.$
It follows from the chain rule that
\begin{align*}
(\omega_\Omega)_{j\overline k}(z) =
\frac{\partial^2}
{\partial z_j\partial\overline z_k}
\log K_\Omega(z,z) =
\frac{\partial^2}
{\partial z_j\partial\overline z_k} \log \mathcal K(y) =
\frac{1}{4z_j\overline z_k}
\frac{\partial^2  \log \mathcal K}
{\partial y_j\partial y_k}(y).
\end{align*}
\end{proof}

\begin{lemma}\label{lem:spectrum-spans}
For every \(y\in L\),
\[
\ker D^2 \log\mathcal K(y)
 =
\bigl(
\operatorname{span}_{\mathbb R}(\mathcal P-\mathcal P)
\bigr)^\perp.
\]
Consequently, since the Bergman metric is nondegenerate, 
\[
\operatorname{span}_{\mathbb R}(\mathcal P-\mathcal P)
=\mathbb R^n.
\]
\end{lemma}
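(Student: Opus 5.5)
The plan is to compute the Hessian of $\log\mathcal K$ explicitly as a covariance matrix and read off its kernel. Write $\mathcal K(y)=\sum_{\alpha\in\mathcal P}c_\alpha e^{2\alpha\cdot y}$. Since the series converges locally normally together with all derivatives, we may differentiate termwise. Introduce the probability weights
\[
p_\alpha(y)=\frac{c_\alpha e^{2\alpha\cdot y}}{\mathcal K(y)},\qquad \alpha\in\mathcal P,
\]
which are positive and sum to $1$; positivity uses $c_\alpha>0$ and $\mathcal K(y)>0$. A direct computation gives
\[
\nabla\log\mathcal K=2\sum_\alpha p_\alpha\,\alpha=:2\bar\alpha,\qquad
D^2\log\mathcal K=4\sum_\alpha p_\alpha(\alpha-\bar\alpha)(\alpha-\bar\alpha)^{T}.
\]
Equivalently, this is the symmetrized form
\[
D^2\log\mathcal K=2\sum_{\alpha,\beta\in\mathcal P}p_\alpha p_\beta(\alpha-\beta)(\alpha-\beta)^{T}.
\]
The double sum converges absolutely, because $\sum p_\alpha|\alpha|^2<\infty$ by normal convergence of the second derivatives of $\mathcal K$.

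For $v\in\R^n$ we then have
\[
v^{T}D^2\log\mathcal K(y)\,v=2\sum_{\alpha,\beta}p_\alpha p_\beta\bigl((\alpha-\beta)\cdot v\bigr)^2\geq0.
\]
Every weight $p_\alpha p_\beta$ is strictly positive, so this quantity vanishes if and only if $(\alpha-\beta)\cdot v=0$ for all $\alpha,\beta\in\mathcal P$. That is exactly the condition $v\perp\operatorname{span}_\R(\mathcal P-\mathcal P)$. Because $D^2\log\mathcal K$ is positive semidefinite, its kernel coincides with the null set of its quadratic form. This proves the claimed identity, and the identity holds for every $y\in L$.

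For the consequence, take $z\in\Omega\cap(\C^*)^n$, which is nonempty since $\Omega$ is open, and apply \eqref{metric}. Conjugating by the invertible diagonal matrix $\operatorname{diag}(1/(2z_j))$ shows that the Hermitian matrix $(\omega_\Omega)_{j\bar k}(z)$ is positive definite if and only if the real symmetric matrix $D^2\log\mathcal K(y)$ is positive definite. Well-definedness of the Bergman metric therefore forces the kernel to be $\{0\}$, which gives $\operatorname{span}_\R(\mathcal P-\mathcal P)=\R^n$.

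The only step that needs real care is justifying the termwise differentiation and the rearrangement into the double sum. This follows from the locally normal convergence of the series and its derivatives stated just before \eqref{monomial-norm}, together with nonnegativity of all terms. The rest is algebra.
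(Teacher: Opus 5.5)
Your proposal is correct and is essentially the paper's argument. The paper also writes $D^2\log\mathcal K(y)$ as $4$ times the covariance of $\alpha$ under the strictly positive weights $p_y(\alpha)$, obtaining $v^TD^2\log\mathcal K(y)v=4\operatorname{Var}_{p_y}(\alpha\cdot v)$; your symmetrized double sum is the same identity. For the consequence, the paper pulls back by $\operatorname{Exp}$ to get $\frac14 D^2\log\mathcal K$ as the metric in $w$-coordinates, which is equivalent to your congruence by $\operatorname{diag}(1/(2z_j))$ in \eqref{metric}.
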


\begin{proof}
Fix \(y\in L\).  Since $ p_y(\alpha) := \frac{c_\alpha e^{2\alpha\cdot y}}{\mathcal K(y)} >0$ for every  $\alpha\in\mathcal P$ and  $\sum_{\alpha\in\mathcal P}p_y(\alpha)=1$, 
$\{p_y(\alpha)\}$
defines a probability distribution on \(\mathcal P\). 
By term-by-term differentiation, we obtain
\[
 \frac{\partial \log\mathcal K}{\partial y_j}(y)
 =
 \frac{1}{\mathcal K(y)}
 \sum_{\alpha\in\mathcal P}
 2\alpha_jc_\alpha e^{2\alpha\cdot y}
 =
 2\sum_{\alpha\in\mathcal P}
 \alpha_jp_y(\alpha).
\]
Differentiating once more yields
\begin{align*}
 \frac{\partial^2  \log\mathcal K}{\partial y_j\partial y_k}(y)
 &=
 4\left(
 \sum_{\alpha\in\mathcal P}
 \alpha_j\alpha_kp_y(\alpha)
 -
 \left(\sum_{\alpha\in\mathcal P}
 \alpha_jp_y(\alpha)\right)
 \left(\sum_{\alpha\in\mathcal P}
 \alpha_kp_y(\alpha)\right)
 \right).
\end{align*}
It follows that, for every \(v\in\mathbb R^n\),
\begin{align*}
 v^TD^2  \log\mathcal K(y)v
 &=
 4\left(
 \sum_{\alpha\in\mathcal P}
 (\alpha\cdot v)^2p_y(\alpha)
 -
 \left(
 \sum_{\alpha\in\mathcal P}
 (\alpha\cdot v)p_y(\alpha)
 \right)^2
 \right)                                      =
 4\operatorname{Var}_{p_y}(\alpha\cdot v).
\end{align*}
Moreover, 
$\operatorname{Var}_{p_y}(\alpha\cdot v)=0$
 if and only if
\(\alpha\cdot v\) is constant on 
\(\mathcal P\).  Namely, 
$ (\alpha-\beta)\cdot v=0$
for all $\alpha,\beta\in\mathcal P.$
It follows that
\[
 \ker D^2  \log\mathcal K(y)
 =
 \left(
 \operatorname{span}_{\mathbb R}(\mathcal P-\mathcal P)
 \right)^\perp.
\]
Writing \(w=y+i\theta\), 
 it follows from the torus invariance that 
\[
 \log K_\Omega\bigl(\operatorname{Exp}(w),
                    \operatorname{Exp}(w)\bigr)
 =
  \log\mathcal K (\operatorname{Re} w)= \log\mathcal K(y).
\]
By (\ref{metric}), the pullback of the Bergman metric reads 
\[
 \frac{\partial^2}{\partial w_j\partial\overline{w}_k}  \log K_\Omega\bigl(\operatorname{Exp}(w),
                    \operatorname{Exp}(w) =  \frac{\partial^2}{\partial w_j\partial\overline{w}_k}
  \log\mathcal K(\operatorname{Re} w)
 =
 \frac14\frac{\partial^2   \log\mathcal K   }{\partial y_j\partial y_k}(y).
\]
Suppose that $ \operatorname{span}_{\mathbb R}(\mathcal P-\mathcal P) \neq\mathbb R^n.$
Then there exists \(v\in\mathbb R^n\setminus\{0\}\) orthogonal to
\(\mathcal P-\mathcal P\).  
It follows that $ D^2  \log\mathcal K(y)v=0$ for every $y\in L$.
Thus  the nonzero \((1,0)\)-vector $ V=\sum_{j=1}^n v_j\frac{\partial}{\partial w_j}$ satisfies 
$$(\operatorname{Exp}^*\omega_\Omega)(w) (V,\overline V) = \frac14v^TD^2  \log\mathcal K(y)v =0.$$
This is impossible and thus 
\[ \operatorname{span}_{\mathbb R}(\mathcal P-\mathcal P)
 =
 \mathbb R^n.
\]
\end{proof}

\begin{corollary}\label{lem:radial-formula}
Let \(y_0\in L\) and \(v\in\mathbb R^n\).  Let $I:=\{t\in\mathbb R:y_0+tv\in L\}$. 
Along the curve $ z_j(t)=z_j(0)e^{v_jt}$ with $ y(t)=y(0)+tv$, 
\begin{equation}\label{eq:radial-speed}
 |\dot z(t)|_{\omega_\Omega}^2
 =\frac14 v^TD^2 \log\mathcal K(y(t))v
 =\frac14\frac{\dd^2}{\dd t^2} \log\mathcal K(y(0)+t v).
\end{equation}
\end{corollary}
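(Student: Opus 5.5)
The plan is to compute the speed of the curve $t\mapsto z(t)$ directly from formula \eqref{metric}, and then to recognize the result as the second derivative of $\log\mathcal K$ along a line in $L$.

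\textbf{Step 1: the squared speed.} Since $z_j(t)=z_j(0)e^{v_jt}$, we have $\dot z_j(t)=v_jz_j(t)$. With $z=z(t)\in\Omega\cap(\C^*)^n$, the squared length in the Bergman metric is
\[
|\dot z(t)|_{\omega_\Omega}^2=\sum_{j,k}(\omega_\Omega)_{j\overline k}(z)\,\dot z_j\,\overline{\dot z_k}.
\]
Substituting \eqref{metric}, the product $z_j\overline z_k$ in the denominator cancels exactly against $\dot z_j\overline{\dot z_k}=v_jv_kz_j\overline z_k$. What remains is $\frac14\sum_{j,k}v_jv_k\,\partial_{y_j}\partial_{y_k}\log\mathcal K(y(t))$, which is the first equality in \eqref{eq:radial-speed}.

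An alternative route uses the lift $w(t)=\operatorname{Exp}^{-1}$ of the curve, namely $w(t)=w(0)+tv$ with real direction $v$. One then applies the pullback identity from the proof of Lemma~\ref{lem:spectrum-spans}, $\partial_{w_j}\partial_{\overline w_k}\log\mathcal K(\operatorname{Re}w)=\frac14\partial_{y_j}\partial_{y_k}\log\mathcal K$, together with the fact that $\operatorname{Exp}$ is a local isometry onto the pulled-back metric.

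\textbf{Step 2: rewriting as a second derivative.} The function $\log\mathcal K$ is smooth on $L$, since the series converges locally normally with all its derivatives and $\mathcal K>0$. Hence, for $t\in I$, the chain rule gives
\[
\frac{\dd^2}{\dd t^2}\log\mathcal K(y(0)+tv)=v^TD^2\log\mathcal K(y(t))\,v,
\]
which is the second equality in \eqref{eq:radial-speed}.

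\textbf{Remaining checks.} The only care needed is to note that $y(t)\in L$ for $t\in I$, so that $z(t)\in\Omega\cap(\C^*)^n$ and \eqref{metric} applies there. There is no real obstacle; the main point is the bookkeeping of the factor $\frac14$ and of the cancellation of $z_j\overline z_k$.
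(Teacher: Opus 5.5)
Your proposal is correct and takes essentially the same approach as the paper, which says only that the identity follows from \eqref{metric} and a straightforward calculation. That calculation is exactly yours: substitute $\dot z_j=v_jz_j$ so that the factor $z_j\overline z_k$ cancels, then apply the chain rule along the line $y(0)+tv$ in $L$.
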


\begin{proof}
It follows from (\ref{metric}) and the straightforward calculation.
\end{proof}

The following lemma is an obstruction to Bergman completeness at
infinity.  It may be regarded as an unbounded counterpart of 
Zwonek's Theorem 3 \cite{Z99} 
 for bounded pseudoconvex Reinhardt
domains (cf. also Proposition 2.7.2 in \cite{Z00}). Related
incompleteness results for certain unbounded Hartogs domains were
proved by Pflug and Zwonek (cf. Theorem 7 in \cite{PZ05}).

\begin{lemma}
\label{lem:rational-ray}
Suppose that there is a vector $ v\in\Rec(L)\cap\Q^n$ with $ v_j>0$ for some $j$.
Then the Bergman metric $\omega_\Omega$ of $\Omega$ is incomplete.
\end{lemma}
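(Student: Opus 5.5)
The idea is to use the radial curve of Corollary \ref{lem:radial-formula} in the direction of $v$. We show that it runs off to infinity in $\Omega$ but has finite Bergman length. Fix $y_0\in L$. Since $v\in\Rec(L)$, the ray $y(t)=y_0+tv$ lies in $L$ for all $t\ge0$. The corresponding curve $z_j(t)=z_j(0)e^{v_jt}$ lies in $\Omega\cap(\C^*)^n$. Because $v_j>0$ for some $j$, we have $|z(t)|\to\infty$. So the curve leaves every compact subset of $\Omega$. If it has finite $\omega_\Omega$-length, then by Hopf--Rinow the metric is incomplete, since a divergent curve of finite length contradicts completeness. By \eqref{eq:radial-speed}, its length is
\[
\int_0^\infty \tfrac12\sqrt{\varphi''(t)}\,\dd t,\qquad \varphi(t):=\log\mathcal K(y_0+tv).
\]
It therefore suffices to show this integral is finite.

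The key structural input is the behaviour of $\alpha\cdot v$ on $\mathcal P$. For $\alpha\in\mathcal P$, \eqref{eq:monomial-norm} gives $\int_L e^{2(\alpha+\one)\cdot y}\,dy<\infty$. Since $L+\R_+v\subset L$, integrability along the direction $v$ forces $(\alpha+\one)\cdot v<0$; otherwise the integrand would not decay along the translates $y+sv$ of a small ball inside $L$. So $\alpha\cdot v<-\one\cdot v$ for all $\alpha\in\mathcal P$.

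Now use rationality. Write $v=w/N$ with $w\in\Z^n$. Then $\alpha\cdot v\in\frac1N\Z$ for every $\alpha\in\mathcal P\subset\Z^n$. Hence the values $\alpha\cdot v$ lie in a discrete set bounded above, so the supremum
\[
s:=\max_{\alpha\in\mathcal P}\alpha\cdot v
\]
is attained. The next value below it is at most $s-1/N$. Split
\[
\mathcal K(y_0+tv)=e^{2st}\bigl(A+B(t)\bigr),\qquad A=\sum_{\alpha\cdot v=s}c_\alpha e^{2\alpha\cdot y_0}>0,
\]
where $B(t)=\sum_{\alpha\cdot v<s}c_\alpha e^{2\alpha\cdot y_0}e^{2(\alpha\cdot v-s)t}$ is nonincreasing and satisfies $B(t)\le e^{-2(t-1)/N}B(1)$ for $t\ge1$. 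Then $\varphi(t)=2st+\log(A+B(t))$, so $\varphi''=(\log(A+B))''$.

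Differentiating $B$ termwise, the bound $|\alpha\cdot v-s|\,e^{-2|\alpha\cdot v-s|t}\lesssim e^{-|\alpha\cdot v-s|t}$ (using the gap $\ge 1/N$) shows that $B'$ and $B''$ are also $O(e^{-t/N})$, with convergence justified by comparison with $B(t/2)$. This gives $\varphi''(t)=O(e^{-t/N})$, so $\sqrt{\varphi''}$ is integrable on $[0,\infty)$.

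The main obstacle is that bound. One needs uniform exponential decay of $B''$ despite possibly unbounded $|\alpha\cdot v|$, and the key is the gap, which is exactly where $v\in\Q^n$ is used. For an irrational $v$ the values $\alpha\cdot v$ could accumulate at the supremum and $\varphi''$ might decay too slowly. A secondary point to check carefully is the strict inequality $(\alpha+\one)\cdot v<0$, which guarantees the supremum is finite.
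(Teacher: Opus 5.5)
Your proposal is correct and follows essentially the same route as the paper. Both arguments use the radial curve in direction $v$, integrability of $e^{2(\alpha+\one)\cdot y}$ along translates inside $L$ to force $(\alpha+\one)\cdot v<0$, rationality of $v$ to make $\{\alpha\cdot v\}$ discrete with an attained maximum and a spectral gap, and exponential decay of $\varphi''$ to give finite length for a divergent curve. The only cosmetic difference is that you bound $\varphi''=(\log(A+B))''$ through $B'$ and $B''$ (dominated by $B(t/2)$), whereas the paper rescales $v$ to be integral and uses the variance identity $\varphi''=4\operatorname{Var}_{p_t}(k)\le 4A_0^{-1}\sum_{k\ge1}k^2A_ke^{-2kt}$.
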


\begin{proof}
By multiplying $v$ by a positive integer, we may assume $v \in\Z^n$.  Choose
$y_0\in L$ and a bounded open neighborhood $U$ of $0$ such that
$y_0+U\Subset L$.  Since $v \in\Rec(L)$,
$ y_0+U+tv\subset L$ for $t\geq0$.
Choose a bounded relatively open set $V\subset v^\perp$ containing $0$
and small enough that $y_0+V\subset y_0+U$.  
Then the infinite cylinder $\mathcal C:= \{y_0+u+sv:u\in V,\ s\geq0\}$
is contained in \(L\).
Let \(\alpha\in\mathcal P\) and write \(y=y_0+u+sv\).  By
\eqref{eq:monomial-norm},

\begin{align*}
\int_{\mathcal C}
 e^{2(\alpha+\one)\cdot y}\,\dd y=
 |v|e^{2(\alpha+\one)\cdot y_0}
 \left(
   \int_V e^{2(\alpha+\one)\cdot u}\,\dd\sigma(u)
 \right)
 \left(
   \int_0^\infty
   e^{2s(\alpha+\one)\cdot v}\,\dd s
 \right)<\infty,
\end{align*}
where \(\dd\sigma\) denotes the induced Lebesgue measure on
\(v^\perp\). It follows that
\[
\int_0^\infty
e^{2s(\alpha+\one)\cdot v}\,\dd s<\infty,
\]
implying 
$(\alpha+\one)\cdot v<0.$
Thus the integers $\alpha\cdot v$ are bounded above.  Let
$ M=\max_{\alpha\in\mathcal P}\alpha\cdot v$
and  
$k=M-\alpha\cdot v\in\Z_{\geq0}$. Then the Bergman kernel 
\begin{equation}\label{eq:grouped-series}
 \mathcal K(y_0+t v)
 =e^{2Mt}\sum_{k=0}^\infty A_ke^{-2kt},
\end{equation}
with 
$ A_k=\sum_{\substack{\alpha\in\mathcal P\\
 M-\alpha\cdot v=k}}c_\alpha e^{2\alpha\cdot y_0} \geq0$, $A_0>0$, and
$\sum_kA_k=\mathcal K(y_0)<\infty$.  
We claim that \(A_k>0\) for at least one \(k>0\).  Otherwise,
$\alpha\cdot v=M$
for every $\alpha\in\mathcal P$.
It  follows that $(\alpha-\beta)\cdot v=0$
for all $\alpha,\beta\in\mathcal P.$
By Lemma \ref{lem:spectrum-spans},
 \(v=0\), contradicting \(v_j>0\).
 
Let  $S(t):=\sum_{k=0}^\infty A_ke^{-2kt}$.
By \eqref{eq:grouped-series}, $\log\mathcal K(y_0+tv)=2Mt+\log S(t).$
Define probability distribution $\{p_t(k)\}$ with $p_t(k)= \frac{A_ke^{-2kt}}{S(t)}.$
It follows from term-by-term differentiation that
\[
\frac{\dd}{\dd t}\log S(t)
=
-2\sum_{k=0}^\infty k\,p_t(k)
\]
and
\[
\frac{\dd^2}{\dd t^2}\log S(t)
=
4\left(
\sum_{k=0}^\infty k^2p_t(k)
-
\left(\sum_{k=0}^\infty kp_t(k)\right)^2
\right).
\]
Consequently,
$$\frac{\dd^2}{\dd t^2}\log\mathcal K(y_0+tv) = 4\operatorname{Var}_{p_t}(k).$$
It follows from 
$\operatorname{Var}_{p_t}(k) \leq \sum_{k=0}^\infty k^2p_t(k)$
and \(S(t)\geq A_0\) that
\begin{align*}
\frac{\dd^2}{\dd t^2}\log\mathcal K(y_0+tv)
&\leq
\frac{4\sum_{k\geq1}k^2A_ke^{-2kt}}{A_0}
\end{align*}
 for \(t\geq1\).
Moreover, $k^2e^{-2kt}=e^{-t}k^2e^{-(2k-1)t} \leq Ce^{-t}$ for 
$k\geq1,\ t\geq1$ with $C:=\sup_{k\geq1}k^2e^{-(2k-1)}<\infty.$
Therefore,
\begin{equation}\label{eq:ray-hessian-decay}
\frac{\dd^2}{\dd t^2}\log\mathcal K(y_0+tv)
\leq
C_0e^{-t}
\end{equation}
for $ t\geq1$ with
$C_0 := \frac{4C}{A_0}\sum_{k\geq1}A_k<\infty.$

Choose \(z(0)\in\Omega\cap(\mathbb C^*)^n\) with
\(\Log z(0)=y_0\), and define the ray $\Log z(t)=y_0+tv\in L$ for \(t\geq0\). Then
 $z_j(t):=z_j(0)e^{v_jt}$ is the curve in $\Omega$. 
  By
Corollary~\ref{lem:radial-formula} and
\eqref{eq:ray-hessian-decay},
\[ |\dot z(t)|_{\omega_\Omega}^2 = \frac14 \frac{\dd^2}{\dd t^2} \log\mathcal K(y_0+tv) \leq \frac{C_0}{4}e^{-t}. \]
Hence
\[ \operatorname{Length}_{\omega_\Omega} \{z(t):t\geq1\} \leq \frac{\sqrt{C_0}}{2} \int_1^\infty e^{-t/2}\,\dd t <\infty. \]
It follows that \(\{z(t_m)\}\) is a Cauchy sequence with respect to the Bergman metric for any \(t_m\to\infty\).
Suppose that $\omega_\Omega$ is complete. Then $z(t_m) \to z_\infty \in \Omega$ as \(t_m\to\infty\).
On the other hand,  since \(v_j>0\) for some \(j\), 
the curve \(z(t)\)  cannot be contained in any compact subset of \(\Omega\).
This contradiction implies that 
 \(\omega_\Omega\) is incomplete.
 \end{proof}

Define
$$ \widetilde\Omega := \operatorname{Exp}^{-1} \bigl(\Omega\cap(\mathbb C^*)^n\bigr) = L+i\mathbb R^n. $$
 Then
$ \operatorname{Exp}:\widetilde\Omega \rightarrow \Omega\cap(\mathbb C^*)^n$
is a holomorphic covering map with deck group
\(2\pi i\mathbb Z^n\).
Let
$$ I=\{j:\Omega\cap\{z_j=0\}\neq\varnothing\}, \quad J=\{1,\ldots,n\}\setminus I,$$
and define
$$ \widehat\Omega = \left\{ (u,w)\in\mathbb C^I\times\mathbb C^J: \Pi(u,w)\in\Omega \right\},$$
where
$$ \Pi(u,w) = \bigl((u_j)_{j\in I},(e^{w_k})_{k\in J}\bigr).$$

Since $\Pi:\widehat\Omega\rightarrow\Omega$
is a holomorphic covering map, the Bergman metric \(\omega_\Omega\) lifts
to the K\"ahler metric $\widehat \omega:=\Pi^*\omega_\Omega$
on each connected component of \(\widehat\Omega\).  Note that $\widehat \omega$ may be different from the intrinsic Bergman
metric of \(\widehat\Omega\).  Since \(\Pi\) is a covering, 
$\widehat \omega$ is a complete K\"ahler-Einstein metric with Einstein constant $-1$. 
It follows from the uniqueness that 
any complete K\"ahler--Einstein metric on $ \widehat\Omega$ with Einstein constant $-1$ must be $\widehat \omega$. 
The affine lifting principle used below is standard in the theory of
Reinhardt domains (cf. \cite{FIK96a, JP08}), while the precise
extension statement does not
seem to be available in this form, so we include the argument here.  

\begin{lemma}\label{lem:toric-extension}
Let
\begin{equation}\label{eq:affine-log-auto}
 \phi(y)=By+d
\end{equation}
be an affine automorphism of \(L\), and suppose that
\begin{equation}\label{eq:Bfixes}
 Be_j=e_j\quad {\rm for~all}\quad j\in I.
\end{equation}
Choose \(c\in\mathbb C^n\) such that \(\operatorname{Re}c=d\).  Then
\[
 \widetilde\phi(W)=BW+c
\]
defines an affine biholomorphism
of \(\widetilde\Omega\). Moreover, $ \widetilde\phi$ induces, on the
dense torus locus, a holomorphic map of \(\widehat\Omega\), which extends to a biholomorphism between connected components of
\(\widehat\Omega\) and is an isometry for $\widehat \omega$.
\end{lemma}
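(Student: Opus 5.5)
The plan is to build the map explicitly in coordinates adapted to the splitting $\{1,\ldots,n\}=I\sqcup J$. The identity \eqref{eq:Bfixes} will make everything descend from $\widetilde\Omega$ to $\widehat\Omega$. The isometry property will then come from the uniqueness of the complete K\"ahler--Einstein metric, which was recorded just before the lemma.

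\emph{Step 1: $\widetilde\phi$ is a biholomorphism of $\widetilde\Omega$.} Since $B$ is real, $\operatorname{Re}(BW+c)=B\operatorname{Re}W+d=\phi(\operatorname{Re}W)$. Because $\phi(L)=L$ and $\widetilde\Omega=L+i\R^n$, the map $\widetilde\phi$ sends $\widetilde\Omega$ into itself. The inverse $W\mapsto B^{-1}(W-c)$ is of the same form, since $\phi^{-1}(y)=B^{-1}y-B^{-1}d$ is also an affine automorphism of $L$. So $\widetilde\phi$ is an affine biholomorphism of $\widetilde\Omega$. Note also that $B^{-1}e_j=e_j$ for $j\in I$, which will let every later step be applied to the inverse as well.

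\emph{Step 2: explicit formula on $\widehat\Omega$.} Condition \eqref{eq:Bfixes} says that the columns of $B$ indexed by $I$ are the vectors $e_j$. Hence, for $k\in J$, $B_{kj}=0$ whenever $j\in I$, and for $j\in I$ we have $(BW)_j=W_j+\sum_{l\in J}B_{jl}W_l$. On the torus locus $\{u_j\neq0,\ j\in I\}$, write $u_j=e^{W_j}$ for $j\in I$ and $w_k=W_k$ for $k\in J$. Then $\widetilde\phi$ induces
\[
\Psi(u,w)=\Bigl(\bigl(u_j\,e^{c_j+\sum_{l\in J}B_{jl}w_l}\bigr)_{j\in I},\ \bigl(c_k+\textstyle\sum_{l\in J}B_{kl}w_l\bigr)_{k\in J}\Bigr).
\]
This is well defined. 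Changing $W_j\mapsto W_j+2\pi i m$ for some $j\in I$ changes $\widetilde\phi(W)$ by $2\pi i m Be_j=2\pi i m e_j$, which leaves the $u'$ and $w'$ outputs unchanged. The displayed formula is an entire holomorphic map of $\C^I\times\C^J$, so it is defined across the hyperplanes $\{u_j=0\}$. By Step 1, $\Pi\circ\Psi$ maps the torus locus of $\widehat\Omega$ into $\Omega\cap(\C^*)^n$.

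\emph{Step 3: extension across $\{u_j=0\}$, which I expect to be the main obstacle.} Let $p=(u,w)\in\widehat\Omega$ with $u_j=0$ for $j$ in some subset $I_0\subset I$. Replace those $u_j$ by small nonzero $\varepsilon_j$ to get a torus point $q\in\widehat\Omega$; this is possible because $\widehat\Omega$ is open. Then $\Psi(q)\in\widehat\Omega$. The point $\Pi(\Psi(q))$ agrees with $\Pi(\Psi(p))$ except in the coordinates $j\in I_0$, where it is nonzero while $\Pi(\Psi(p))$ has zero there; indeed the $j$-th coordinate of $\Psi$ is $u_j$ times a nonvanishing factor. Each $j\in I_0$ lies in $I$, and a pseudoconvex Reinhardt domain meeting $\{z_j=0\}$ is relatively complete in $z_j$, in the sense already used in the proof of Lemma \ref{lem:recession}. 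So we may shrink those coordinates to $0$ and stay inside $\Omega$, which gives $\Pi(\Psi(p))\in\Omega$, that is, $\Psi(p)\in\widehat\Omega$. The same argument applies to the map $\Psi'$ built from $\phi^{-1}$. Since $\Psi\circ\Psi'$ and $\Psi'\circ\Psi$ are the identity on the dense torus locus, they are the identity everywhere by continuity. Hence $\Psi$ is a biholomorphism of $\widehat\Omega$, and it maps connected components onto connected components.

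\emph{Step 4: isometry.} Let $\widehat\Omega_1$ be a component and $\widehat\Omega_2=\Psi(\widehat\Omega_1)$. The metric $\Psi^*\widehat\omega$ on $\widehat\Omega_1$ is the pullback of $\widehat\omega|_{\widehat\Omega_2}$ under a biholomorphism. It is therefore a complete K\"ahler--Einstein metric with Einstein constant $-1$. By the uniqueness stated before the lemma, $\Psi^*\widehat\omega=\widehat\omega$ on $\widehat\Omega_1$, so $\Psi$ is an isometry for $\widehat\omega$.

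As a cross-check on the torus locus, $\widehat\omega$ pulls back to $\widetilde\Omega$ as $\tfrac14 D^2\log\mathcal K(\operatorname{Re}W)$ (see the proof of Lemma \ref{lem:spectrum-spans}). The uniqueness argument avoids having to verify directly that $\log\mathcal K\circ\phi-\log\mathcal K$ is affine.
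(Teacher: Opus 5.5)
Your proof is correct. It follows the paper's overall structure: the same explicit block formula for the induced map on $\C^I\times\C^J$, invertibility via the map built from $\phi^{-1}$, and the isometry via uniqueness of the complete K\"ahler--Einstein metric. The one genuine difference is how you show that points of $\widehat\Omega$ on the divisors $\{u_j=0\}$, $j\in I$, are sent into $\widehat\Omega$.

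The paper argues topologically. It shows that the ambient map preserves $\mathcal E=\bigcup_{j\in I}\{u_j=0\}$ and maps $\widehat\Omega\setminus\mathcal E$ into itself. Hence some neighbourhood $\mathcal V$ of the image point $q$ satisfies $\mathcal V\setminus\mathcal E\subset\widehat\Omega$, which puts $q$ in $\overline{\widehat\Omega}$. It then rules out $q\in\partial\widehat\Omega$: since $\partial\widehat\Omega=\Pi^{-1}(\partial\Omega)$ is a smooth hypersurface, its exterior side would meet $\mathcal V\setminus\mathcal E$.

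You instead approximate $p$ by a torus point $q\in\widehat\Omega$. You observe that each $I$-component of $\Psi$ is $u_j$ times a nowhere-vanishing factor, so $\Pi(\Psi(p))$ is just $\Pi(\Psi(q))$ with the $I_0$-coordinates set to zero. You then invoke relative completeness of the pseudoconvex Reinhardt domain $\Omega$ in the directions $j\in I$ (Theorem~1.11.13 in \cite{JP08}), the same property the paper uses in Lemma~\ref{lem:recession} and in the Type II analysis.

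Your route is shorter and needs no boundary regularity for this step, only pseudoconvexity, which holds here since the Bergman metric is complete. The paper's route avoids pseudoconvexity and uses only openness and $C^1$-smoothness of $\partial\Omega$. As a small bonus, your check that the deck shift $2\pi i m e_j$ goes to $2\pi i m Be_j=2\pi i m e_j$ shows exactly where the hypothesis $Be_j=e_j$ enters well-definedness; the paper only asserts independence of the choice of $\log u_j$.
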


\begin{proof}
By re-ordering the coordinates as $\{I, J\}$, 
 it follows from \eqref{eq:Bfixes} that 
$ B=\begin{pmatrix}I_{|I|}&C\\0&D\end{pmatrix},$
for $ D\in\operatorname{GL}(|J|,\R).$
Choose $c\in\C^n$ with $\operatorname{Re}c=d$.  On the dense locus
where all $u_j\neq0$, the lift is induced by $W\mapsto BW+c$.  It
extends to the ambient space $\C^I\times\C^J$ by
\begin{align*}
 u'_j&=e^{c_j}u_j
 \exp\left(\sum_{k\in J}B_{jk}w_k\right),
 &&j\in I, \\ 
 w'_k&=\sum_{\ell\in J}B_{k\ell}w_\ell+c_k,
 &&k\in J.
\end{align*}
The inverse matrix $B^{-1} =
\begin{pmatrix}
I_{|I|}&-CD^{-1}\\
0&D^{-1}
\end{pmatrix}$
satisfies \(B^{-1}e_j=e_j\) for every \(j\in I\).
Since \(B\) is real, $\operatorname{Re}\widetilde\Phi(W)=B\operatorname{Re}W+d=\phi(\operatorname{Re}W).$
Because \(\phi\) is an affine automorphism of \(L\),
\(\widetilde\Phi\) is a biholomorphic automorphism of
\(\widetilde\Omega\).
On the locus where \(u_j\neq0\) for every \(j\in I\), choose 
$W_j=\log u_j$ for $ j\in I$
and $W_k=w_k$ for $ k\in J.$
The block form of \(B\) yields
\begin{align*}
W'_j
&=
W_j+\sum_{k\in J}B_{jk}w_k+c_j,
&&j\in I,\\
W'_k
&=
\sum_{\ell\in J}B_{k\ell}w_\ell+c_k,
&&k\in J.
\end{align*}
By taking exponential, we obtain
\begin{align}
u'_j
&=
e^{c_j}u_j
\exp\left(\sum_{k\in J}B_{jk}w_k\right),
&&j\in I,
\label{eq:toric-u-map}\\
w'_k
&=
\sum_{\ell\in J}B_{k\ell}w_\ell+c_k,
&&k\in J.
\label{eq:toric-w-map}
\end{align}
Note that these formulas are independent of the choices of
\(\log u_j\).  
Moreover, the right-hand sides of
\eqref{eq:toric-u-map}--\eqref{eq:toric-w-map} are holomorphic for all
$(u,w)\in\mathbb C^I\times\mathbb C^J,$
including points where some \(u_j=0\).  We therefore obtain an ambient
holomorphic map
\[
\widehat\Phi:
\mathbb C^I\times\mathbb C^J
\rightarrow
\mathbb C^I\times\mathbb C^J.
\]
Since \(B^{-1}\) has the same block form, the affine inverse $W\mapsto B^{-1}(W-c)$
induces an inverse map of the same form.  Hence
\(\widehat\Phi\) is a biholomorphism.

Set
\[
\mathcal E
:=
\bigcup_{j\in I}\{u_j=0\}.
\]
This is the union of the coordinate divisors in the partial
logarithmic cover.  Since the exponential factors in
\eqref{eq:toric-u-map} are nowhere zero, we have
$u'_j=0$ if and only if $u_j=0$ for all  $j\in I$.
Therefore,
\begin{equation}\label{eq:divisor-preserved}
\widehat\Phi^{-1}(\mathcal E)
=
\mathcal E
=
\widehat\Phi(\mathcal E).
\end{equation}
We first prove that \(\widehat\Phi\) preserves
\(\widehat\Omega\) away from \(\mathcal E\).  Let
$p=(u,w)\in\widehat\Omega\setminus\mathcal E.$
Choose \(W_j=\log u_j\) for \(j\in I\), and set
\(W_k=w_k\) for \(k\in J\).  Since
\(\Pi(p)\in\Omega\cap(\mathbb C^*)^n\), we have
$W\in\widetilde\Omega.$ 
\eqref{eq:toric-u-map}--\eqref{eq:toric-w-map} are
induced by \(W\mapsto BW+c\).  Since
\(\widetilde\Phi(\widetilde\Omega)=\widetilde\Omega\), it follows that
$\widehat\Phi(p)\in\widehat\Omega.$ 
By \eqref{eq:divisor-preserved}, the image is also outside
\(\mathcal E\).  Thus
\begin{equation}\label{eq:torus-locus-preserved}
\widehat\Phi
\bigl(\widehat\Omega\setminus\mathcal E\bigr)
\subset
\widehat\Omega\setminus\mathcal E.
\end{equation}
We next prove that points of
\(\widehat\Omega\cap\mathcal E\) are also mapped into
\(\widehat\Omega\).  Let
$p\in\widehat\Omega\cap\mathcal E$
and let $q:=\widehat\Phi(p).$
Choose a sufficiently small open ball \(\mathcal U\) centered at \(p\)
such that
$\overline{\mathcal U}\subset\widehat\Omega.$
Since \(\widehat\Phi\) is an ambient biholomorphism,
$\mathcal V:=\widehat\Phi(\mathcal U)$
is an open neighborhood of \(q\).
Let \(q'\in\mathcal V\setminus\mathcal E\) and write
$p':=\widehat\Phi^{-1}(q').$
Then \(p'\in\mathcal U\subset\widehat\Omega\).  Moreover,
\eqref{eq:divisor-preserved} implies that \(p'\notin\mathcal E\).
Hence \eqref{eq:torus-locus-preserved} yields 
$q'=\widehat\Phi(p')\in\widehat\Omega.$
This shows \begin{equation}\label{eq:image-minus-divisors}
\mathcal V\setminus\mathcal E
\subset\widehat\Omega.
\end{equation}
The set \(\mathcal E\) is a finite union of complex hyperplanes and
therefore has empty interior.  Consequently,
\(\mathcal V\setminus\mathcal E\) is dense in \(\mathcal V\).
Since $\mathcal V\setminus\mathcal E\subset\widehat\Omega,$
we obtain $q\in\overline{\widehat\Omega}.$
Suppose, for contradiction, that \(q\notin\widehat\Omega\).  Then
$q\in\partial\widehat\Omega.$
Because \(\Pi\) is a local biholomorphism and \(\partial\Omega\) is
smooth, $\partial\widehat\Omega=\Pi^{-1}(\partial\Omega)$
is a smooth real hypersurface.  Hence there are a neighborhood
\(\mathcal W\) of \(q\) and a smooth defining function \(\rho\) such
that $\widehat\Omega\cap\mathcal W=\{\rho<0\},
\partial\widehat\Omega\cap\mathcal W=\{\rho=0\},
d\rho\neq0$ hold
on \(\partial\widehat\Omega\cap\mathcal W\).
Since \(q\in\partial\widehat\Omega\), the exterior side
$\mathcal W\cap\{\rho>0\}$
meets every sufficiently small neighborhood of \(q\).  Therefore,
$\mathcal V\cap\mathcal W\cap\{\rho>0\}$
is a nonempty open set.  As \(\mathcal E\) has empty interior, we can
choose $q'\in \mathcal V\cap\mathcal W\cap\{\rho>0\} \setminus\mathcal E.$
Since \(q'\in\mathcal V\setminus\mathcal E\),
\eqref{eq:image-minus-divisors} implies
$q'\in\widehat\Omega.$
On the other hand, \(\rho(q')>0\), so
\(q'\notin\widehat\Omega\), a contradiction.  Hence
$ q=\widehat\Phi(p)\in\widehat\Omega. $
We have thus proved
$ \widehat\Phi(\widehat\Omega)\subset\widehat\Omega.$
Since the inverse map is induced by $W\mapsto B^{-1}(W-c)$, 
 the same argument applying 
to \(\widehat\Phi^{-1}\), yields
$\widehat\Phi^{-1}(\widehat\Omega) \subset\widehat\Omega.$
It follows that $\widehat\Phi(\widehat\Omega)=\widehat\Omega.$

Because \(\widehat\Phi\) is a homeomorphism of
\(\widehat\Omega\), it permutes its connected components.  If
\(\widehat\Omega_1\) is a connected component, then
$\widehat\Omega_2:=\widehat\Phi(\widehat\Omega_1)$
is another connected component, and
$ \widehat\Phi: \widehat\Omega_1 \rightarrow \widehat\Omega_2$
is a biholomorphism.
Let $\Pi_i:=\Pi|_{\widehat\Omega_i},
\widehat\omega_i:=\Pi_i^*\omega_\Omega,$ for 
$ i=1,2.$
Note that \(\widehat\omega_i\) is a complete
K\"ahler-Einstein metric with Einstein constant -1. 
The metric $\omega':=\widehat\Phi^*\widehat\omega_2$ on \(\widehat\Omega_1\) is 
also a complete
K\"ahler-Einstein metric with Einstein constant -1. 
The uniqueness thus yields $-\widehat\omega_1=-\omega'$ \cite{CY80}. Therefore \(\widehat\Phi\) is an isometry between the connected components. 
\end{proof}

\begin{lemma}
\label{lem:spectrum-rigidity}
Under the hypotheses of Lemma~\ref{lem:toric-extension}, there exist
$\ell\in\R^n$ and $c_0\in\R$ such that
\begin{equation}\label{eq:F-affine-difference}
 \log\mathcal K(By+d)-\log\mathcal K(y)=2\ell\cdot y+c_0,
\end{equation}
and
\begin{equation}\label{eq:difference-spectrum-invariance}
 B^T(\mathcal P-\mathcal P)=\mathcal P-\mathcal P.
\end{equation}
\end{lemma}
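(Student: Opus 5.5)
The plan is to turn the isometry from Lemma~\ref{lem:toric-extension} into an identity between Kähler potentials on the tube $\widetilde\Omega=L+i\R^n$, and then compare the exponential expansions of both sides term by term. By \eqref{metric} (and the computation in the proof of Lemma~\ref{lem:spectrum-spans}), the pullback $\operatorname{Exp}^*\omega_\Omega$ on $\widetilde\Omega$ is $\sqrt{-1}\partial\bar\partial \log\mathcal K(\operatorname{Re}W)$. Its coefficient matrix is $\frac14 D^2\log\mathcal K(y)$.

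First I pass the isometry from $\widehat\Omega$ to $\widetilde\Omega$. The map $\widetilde\Omega\to\widehat\Omega\setminus\mathcal E$, $W\mapsto \bigl((e^{W_j})_{j\in I},(W_k)_{k\in J}\bigr)$, is a holomorphic covering. It intertwines $\widetilde\phi(W)=BW+c$ with $\widehat\Phi$, by the formulas \eqref{eq:toric-u-map}--\eqref{eq:toric-w-map}. It also pulls $\widehat\omega$ back to $\operatorname{Exp}^*\omega_\Omega$, since $\Pi$ composed with this map is $\operatorname{Exp}$. Because $\widehat\Phi$ is an isometry of $\widehat\omega$ between components, $\widetilde\phi$ is an isometry of $\operatorname{Exp}^*\omega_\Omega$. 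Hence
\[
\sqrt{-1}\partial\bar\partial\bigl[\log\mathcal K(B\operatorname{Re}W+d)-\log\mathcal K(\operatorname{Re}W)\bigr]=0
\]
on $\widetilde\Omega$, using that $B$ is real, so $\operatorname{Re}\widetilde\phi(W)=B\operatorname{Re}W+d$. For a function of $\operatorname{Re}W$ alone, $\partial\bar\partial$ equals $\frac14$ of the real Hessian. Therefore $f(y):=\log\mathcal K(By+d)-\log\mathcal K(y)$ has vanishing Hessian on the convex, hence connected, open set $L$. So $f$ is affine, which gives \eqref{eq:F-affine-difference} with some $\ell\in\R^n$, $c_0\in\R$.

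For \eqref{eq:difference-spectrum-invariance} I exponentiate \eqref{eq:F-affine-difference} and expand both kernels:
\[
\sum_{\alpha\in\mathcal P} c_\alpha e^{2\alpha\cdot d}\, e^{2(B^T\alpha-\ell)\cdot y}
= e^{c_0}\sum_{\beta\in\mathcal P} c_\beta e^{2\beta\cdot y},\qquad y\in L .
\]
Both sides are Dirichlet-type series with positive coefficients, converging locally uniformly on $L$. The map $\alpha\mapsto B^T\alpha-\ell$ is injective, so the left side has distinct frequencies. I then claim that the frequency sets coincide: $B^T\mathcal P-\ell=\mathcal P$. Granting this, subtracting two elements removes $\ell$, and $B^T(\mathcal P-\mathcal P)=\mathcal P-\mathcal P$ follows.

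The main obstacle is this uniqueness of exponential expansions with real (noninteger) frequencies. I would handle it by complexification. Replacing $y$ by $W\in L+i\R^n$ turns both sides into holomorphic functions, since both series converge absolutely there, being dominated by their values at $\operatorname{Re}W$. They agree on the totally real set $L$, so by the identity theorem they agree on the whole tube. Fix $y_0\in L$ and restrict to $W=y_0+is$, $s\in\R^n$. Each side becomes an absolutely convergent generalized trigonometric series $\sum a_\gamma e^{2i\gamma\cdot s}$ in $s$, that is, a Bohr almost periodic function. For any $\gamma$, the Bohr mean $\lim_{T\to\infty}(2T)^{-n}\int_{[-T,T]^n}F(s)e^{-2i\gamma\cdot s}\,\dd s$ recovers the coefficient of frequency $\gamma$. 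Absolute convergence justifies exchanging the limit and the sum. So the coefficients, and hence the sets of frequencies carrying positive coefficients, must agree. Since all $c_\alpha e^{2\alpha\cdot d}e^{2(B^T\alpha-\ell)\cdot y_0}$ and $c_\beta e^{2\beta\cdot y_0}$ are positive, this gives the claimed bijection of frequency sets. As a byproduct it also gives a relation between the coefficients $c_\alpha$ and $c_{B^T\alpha-\ell}$, though that relation is not needed here.
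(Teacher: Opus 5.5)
Your proposal is correct and follows the paper's route. The isometry forces $D^2\bigl(\log\mathcal K(By+d)-\log\mathcal K(y)\bigr)=0$ on $L$, and uniqueness of the positive exponential expansion (complexify to a tube, then apply Fourier-type uniqueness on an imaginary slice) yields $B^T\mathcal P=\mathcal P+\ell$. The differences from the paper are minor: you recover coefficients with Bohr means where the paper invokes uniqueness of Fourier transforms of finite discrete measures, and you justify via the covering $\widetilde\Omega\to\widehat\Omega\setminus\mathcal E$ that $\widetilde\phi$ is an isometry of $\operatorname{Exp}^*\omega_\Omega$, which the paper only asserts.
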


\begin{proof}
Let \(W=y+i\theta\). 
 The pullback of $\omega_\Omega$ to $ \widetilde\Omega$ can be written as 
$\widetilde\omega=i\partial\bar\partial  \log\mathcal K(\operatorname{Re}W).$
It follows from  Lemma~\ref{lem:toric-extension}
that $\widetilde\phi^*\widetilde\omega =\widetilde\omega$. 
Consequently, we have 
\begin{equation}\label{eq:pluriharmonic-potential-difference}
\partial\bar\partial \left(  \log\mathcal K(B\operatorname{Re}W+d) -  \log\mathcal K(\operatorname{Re}W) \right) = 0.
\end{equation}
It follows from \eqref{eq:pluriharmonic-potential-difference} that
$D^2 \left( \log\mathcal K(By+d)- \log\mathcal K(y)\right)=0$
for all $y\in L$. 
Since  \(L\) is convex, the standard argument in convex geometry yields that 
there exist
\(\ell \in\mathbb R^n\) and \(c_0\in\mathbb R\) such that
$\log\mathcal K(By+d)- \log\mathcal K(y)=2\ell \cdot y+c_0.$

Since
\begin{align*}
\mathcal K(By+d)=\sum_{\alpha\in\mathcal P} c_\alpha e^{2\alpha\cdot(By+d)}=\sum_{\alpha\in\mathcal P} c_\alpha e^{2\alpha\cdot d} e^{2\alpha\cdot By}=
\sum_{\alpha\in\mathcal P} c_\alpha e^{2\alpha\cdot d} e^{2(B^T\alpha)\cdot y},
\end{align*}
and
\begin{align*}
e^{2\ell\cdot y+c_0}\mathcal K(y)= e^{c_0}e^{2\ell\cdot y} \sum_{\alpha\in\mathcal P} c_\alpha e^{2\alpha\cdot y}
= e^{c_0} \sum_{\alpha\in\mathcal P} c_\alpha e^{2(\alpha+\ell)\cdot y},
\end{align*}
it follows from  \eqref{eq:F-affine-difference} that  
\begin{equation}\label{eq:two-exponential-series}
\sum_{\alpha\in\mathcal P}
c_\alpha e^{2\alpha\cdot d}
e^{2(B^T\alpha)\cdot y}
=
e^{c_0}
\sum_{\alpha\in\mathcal P}
c_\alpha e^{2(\alpha+\ell)\cdot y}
\end{equation}
for all $y\in L.$
Define finite positive discrete measures on \(\mathbb R^n\) by
\begin{equation}\label{eq:left-spectral-measure}
\mu
:=
\sum_{\alpha\in\mathcal P}
c_\alpha
e^{2\alpha\cdot d+2(B^T\alpha)\cdot y_0}
\delta_{B^T\alpha}
\end{equation}
and
\begin{equation}\label{eq:right-spectral-measure}
\nu
:=
e^{c_0}
\sum_{\alpha\in\mathcal P}
c_\alpha
e^{2(\alpha+\ell)\cdot y_0}
\delta_{\alpha+\ell},
\end{equation}
where \(\delta_x\) denotes the Dirac measure at \(x\).
It follows that
\begin{align*}
\mu(\mathbb R^n)
=\sum_{\alpha\in\mathcal P} c_\alpha e^{2\alpha\cdot d+2(B^T\alpha)\cdot y_0}=
\mathcal K(By_0+d)
<\infty,
\end{align*}
\begin{align*}
\nu(\mathbb R^n) = e^{c_0} \sum_{\alpha\in\mathcal P} c_\alpha e^{2(\alpha+\ell)\cdot y_0}=
e^{c_0+2\ell\cdot y_0}\mathcal K(y_0) <\infty
\end{align*}
and $\mu(\mathbb R^n)= \nu(\mathbb R^n) $ by (\ref{eq:F-affine-difference}).
Fix \(y_0\in L\).  Since \(L\) is open, there exists an open
neighborhood \(U\) of \(0\) in \(\mathbb R^n\) such that
$y_0+h\in L$ for  $h\in U$.
Because \(y\mapsto By+d\) is an automorphism of \(L\), we also have
$B(y_0+h)+d\in L$
It follows from substituting \(y=y_0+h\) into
\eqref{eq:two-exponential-series} that
\begin{equation}\label{eq:laplace-transform-equality}
\int_{\mathbb R^n}e^{2x\cdot h}\,\dd\mu(x)
=
\int_{\mathbb R^n}e^{2x\cdot h}\,\dd\nu(x),
\end{equation}
for $h\in U.$
Thus \(\mu\) and \(\nu\) have the same  Laplace transform
in a neighborhood of the origin.
 For $\zeta\in \mathcal T := \{\zeta\in\mathbb C^n:   \operatorname{Re}\zeta\in U\},$
\[
M_\mu(\zeta)
:=
\int_{\mathbb R^n}
e^{2\zeta\cdot x}\,\dd\mu(x),
\quad
M_\nu(\zeta)
:=
\int_{\mathbb R^n}
e^{2\zeta\cdot x}\,\dd\nu(x).
\]
are 
holomorphic functions on a possibly small tube neighborhood of
\(i\mathbb R^n\).  By
\eqref{eq:laplace-transform-equality} and the uniqueness of holomorphic functions, 
we obtain
$M_\mu(\zeta)=M_\nu(\zeta)$. 
 In particular, for every
\(\xi\in\mathbb R^n\), we may take
\(\zeta=i\xi/2\) and obtain
\[
\int_{\mathbb R^n}e^{i\xi\cdot x}\,\dd\mu(x)
=
\int_{\mathbb R^n}e^{i\xi\cdot x}\,\dd\nu(x).
\]
Thus \(\mu\) and \(\nu\) have the same Fourier transform.  The
uniqueness theorem for Fourier transforms of finite measures implies
$\mu=\nu.$

Note that all coefficients in
\eqref{eq:left-spectral-measure} and
\eqref{eq:right-spectral-measure} are strictly positive.  Moreover,
\(B\) is invertible because \(y\mapsto By+d\) is an affine
automorphism.  Since \(\mathcal P\subset\mathbb Z^n\), the sets
$B^T\mathcal P$ and $\mathcal P+\ell$
are discrete subsets of \(\mathbb R^n\).  It follows that
$ \operatorname{supp}\mu=B^T\mathcal P, 
\operatorname{supp}\nu=\mathcal P+\ell.$
The equality \(\mu=\nu\) therefore yields
$B^T\mathcal P=\mathcal P+\ell$
and (\ref{eq:difference-spectrum-invariance}) thus
 follows.
\end{proof}

\subsection{Proof of Theorem \ref{unbounded}}

\begin{proof}[Proof of Theorem \ref{unbounded}]
We follow the idea in the proof of Theorem \ref{bounded}. 
By Proposition \ref{prop:connected} and Proposition \ref{prop:sphere}, $M$ is a connected closed strongly pseudoconvex
spherical tube hypersurface.
 After an invertible real affine
transformation (\ref{eq:affine-change}), 
the hypersurface $A^{-1}(S-b)$ is one of three  models in Theorem \ref{DY} by \cite{DY85}.

We first consider Type I model. Let 
\begin{equation}\label{eq:q-Type-I-components}
C_{\rm I}
=\left\{q\in\mathbb R^n:
\begin{array}{ll}
q_0\geq0, & \\[2pt]
q_\alpha\leq0,
    & 1\leq\alpha\leq m,\\[2pt]
q_\alpha=0,
    & m<\alpha\leq n-1
\end{array}
\right\}.
\end{equation}

We first show that
\(C_{\mathrm I}\subset \operatorname{Rec}(L_{\mathrm I}^{(m)})\). Let $q \in C_{\mathrm I}$. 
Fix \(x\in L_{\mathrm I}^{(m)}\) and \(t\geq0\).  Since
\(q_\alpha\leq0\) for \(1\leq\alpha\leq m\), we have
$e^{x_\alpha+tq_\alpha}\leq e^{x_\alpha}$
for $1\leq\alpha\leq m.$
Moreover, \(q_\alpha=0\) for \(m<\alpha\leq n-1\), and hence
$(x_\alpha+tq_\alpha)^2=x_\alpha^2$
for $m<\alpha\leq n-1.$
Finally, \(q_0\geq0\), so \(x_0+tq_0\geq x_0\).  Therefore,
\begin{align*}
x_0+tq_0
&\geq x_0 >
\sum_{\alpha=1}^{m}e^{x_\alpha}
+
\sum_{\alpha=m+1}^{n-1}x_\alpha^2 \geq
\sum_{\alpha=1}^{m}e^{x_\alpha+tq_\alpha}
+
\sum_{\alpha=m+1}^{n-1}
   (x_\alpha+tq_\alpha)^2.
\end{align*}
Thus \(x+tq\in L_{\mathrm I}^{(m)}\) for every
\(x\in L_{\mathrm I}^{(m)}\) and every \(t\geq0\).  Consequently,
$C_{\mathrm I} \subset \operatorname{Rec}\bigl(L_{\mathrm I}^{(m)}\bigr).$
For the reverse inclusion, suppose that
$q\in\operatorname{Rec}\bigl(L_{\mathrm I}^{(m)}\bigr).$
Fix any \(x\in L_{\mathrm I}^{(m)}\).  By definition,
\begin{equation}\label{eq:type-I-ray-inequality}
x_0+tq_0
>
\sum_{\alpha=1}^{m}e^{x_\alpha+tq_\alpha}
+
\sum_{\alpha=m+1}^{n-1}
   (x_\alpha+tq_\alpha)^2
\end{equation}
for every \(t\geq0\).
We first prove that \(q_0\geq0\).  If \(q_0<0\), then
$x_0+tq_0\rightarrow-\infty$
as $t\rightarrow+\infty.$
On the other hand, the right-hand side of
\eqref{eq:type-I-ray-inequality} is nonnegative for every \(t\).
Hence \eqref{eq:type-I-ray-inequality} fails for all sufficiently
large \(t\), which is a contradiction.  Therefore,
$q_0\geq0.$
Next, we prove 
$q_\alpha\leq0$ for $ 1\leq\alpha\leq m.$
Suppose, to the contrary, that \(q_k>0\) for some
\(k\in\{1,\ldots,m\}\).  From
\eqref{eq:type-I-ray-inequality}, after discarding all the other
nonnegative terms on the right, we obtain
$x_0+tq_0>e^{x_k+tq_k}$ for  $t\geq0$.
The left-hand side grows at most linearly in \(t\), whereas
$e^{x_k+tq_k}=e^{x_k}e^{tq_k}$
grows exponentially because \(q_k>0\).  More precisely,
$\frac{e^{x_k+tq_k}}{1+t}\rightarrow+\infty$
as $t\rightarrow+\infty.$
Thus, for all sufficiently large \(t\),
$e^{x_k+tq_k}>x_0+tq_0,$
contradicting \eqref{eq:type-I-ray-inequality}.  Hence
\(q_\alpha\leq0\) for every \(1\leq\alpha\leq m\).
Finally, we prove that
$q_\alpha=0$ for $ m<\alpha\leq n-1.$
Suppose that \(q_k\neq0\) for some
\(k\in\{m+1,\ldots,n-1\}\).  Again,
\eqref{eq:type-I-ray-inequality} implies
$x_0+tq_0>(x_k+tq_k)^2$ for $t\geq0$.
But $(x_k+tq_k)^2=q_k^2t^2+2x_kq_kt+x_k^2$
grows quadratically in \(t\), whereas \(x_0+tq_0\) grows at most
linearly.  In particular,
$(x_k+tq_k)^2-(x_0+tq_0) \rightarrow+\infty$ 
as $t\rightarrow+\infty.$
This contradicts the preceding strict inequality.  Therefore,
\(q_k=0\).  Since \(k\) was arbitrary,
$q_\alpha=0$ for $ m<\alpha\leq n-1.$
 Hence
$\operatorname{Rec}\bigl(L_{\mathrm I}^{(m)}\bigr)
\subset C_{\mathrm I}.$
Combining the two inclusions yields $C_{\mathrm I} = \operatorname{Rec}(L_{\mathrm I}^{(m)})$.

For every \(j\in I\), Lemma~\ref{lem:recession} and the affine
relation $L=A\bigl(L_{\mathrm I}^{(m)}\bigr)+b$
imply  $-e_j\in\operatorname{Rec}(L)=A\operatorname{Rec}\bigl(L_{\mathrm I}^{(m)}\bigr).$
Applying \(A^{-1}\), we obtain
\begin{equation}\label{eq:qfilled}
q^{(j)}
:=
-A^{-1}e_j
\in
\operatorname{Rec}\bigl(L_{\mathrm I}^{(m)}\bigr)
=
C_{\mathrm I}
\end{equation}
for $ j\in I.$

\begin{lemma}\label{lem:vertical-origin}
If $q^{(j)}_0>0$ for some $j\in I$, then $0\in\Omega$.
\end{lemma}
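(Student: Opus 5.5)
We aim to show that all logarithmic coordinates tend to $-\infty$ along a suitable ray in $L$, and then to rule out $0\in\partial\Omega$ using Lemma~\ref{lem:origin-not-boundary}.

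Fix $j\in I$ with $q:=q^{(j)}$ satisfying $q_0>0$. By \eqref{eq:qfilled}, $q\in C_{\mathrm I}$, so $q_\alpha\le 0$ for $1\le\alpha\le m$ and $q_\alpha=0$ for $\alpha>m$. The key observation is that $e_0=(1,0,\dots,0)$ lies in $C_{\mathrm I}=\Rec(L_{\mathrm I}^{(m)})$. Hence $Ae_0\in\Rec(L)$. Write $v:=Ae_0$. Since $q_0>0$, we can decompose
\[
e_0=\frac{1}{q_0}\Bigl(q-\sum_{\alpha=1}^{m}q_\alpha e_\alpha\Bigr),
\]
which gives
\[
v=\frac1{q_0}\Bigl(-e_j-\sum_{\alpha=1}^m q_\alpha Ae_\alpha\Bigr).
\]
This alone does not obviously give negative coordinates, so we use a different route.

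First we show that $v$ has no positive coordinate. Suppose $v_k>0$ for some $k$. Perturb $v$ inside the open cone $\operatorname{int}C_{\mathrm I}$, or inside its relative interior if that is lower-dimensional. The affine image of a rational-dense set is dense there, so we can find a rational $v'\in\Rec(L)$ with $v'_k>0$. Lemma~\ref{lem:rational-ray} then makes $\omega_\Omega$ incomplete, a contradiction.

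The delicate point is rationality. The cone $A C_{\mathrm I}$ is a polyhedral cone containing $-e_j$. We would use the face $A(\mathbb R_+e_0+\sum_{\alpha\le m}\mathbb R_+(-e_\alpha))$. If this face is not rational, we perturb within the full cone $\Rec(L)$ when it has nonempty interior. Otherwise we argue that the linear span of $\Rec(L)$ is rational, using Lemma~\ref{lem:recession}-type coordinate directions together with the fact that $\Rec(L)$ relates to $\mathcal P$ through \eqref{eq:monomial-norm}.

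Next, we need \emph{strict} negativity of every coordinate of some recession direction. Consider $w:=v+\varepsilon'(-e_j)$ and combinations with the other $-e_i$, $i\in I$. For coordinates $k\notin I$, we argue that $v_k<0$. If instead $v_k=0$, then $L$ would contain rays along which $y_k$ is constant while $x_0\to\infty$. In that case the boundary point computation used in the bounded Type I argument gives a path on $S$ with $y_k$ bounded and the others tending to $-\infty$, so the domain would meet $\{z_k=0\}$ in the closure in a way that forces $k\in I$. For $k\in I$, adding $-e_k$ (which lies in $\Rec(L)$) makes that coordinate negative. The result is a recession direction $d$ with all $d_k<0$.

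Finally, for any $y_0\in L$, the points $y_0+td\in L$ satisfy $y\to-\infty$ in every coordinate. So $\operatorname{Exp}(y_0+td)\to0$, which gives $0\in\overline\Omega$. By Lemma~\ref{lem:origin-not-boundary}, $0\notin\partial\Omega$, hence $0\in\Omega$.

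The main obstacle is the step excluding $v_k=0$ for $k\notin I$, together with the rationality issue in invoking Lemma~\ref{lem:rational-ray}. We would attack both by exploiting that the Type I boundary is a graph over $x'$ with $x_0\to\infty$, combined with relative completeness of $\Omega$.
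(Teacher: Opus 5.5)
There is a genuine gap, and it sits where you flag the main obstacle: the step asserting $v_k<0$ for $k\notin I$ cannot be carried out as described. More fundamentally, producing a recession ray of $L$ with all coordinates negative is the wrong tool.

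Take the simplest case $n=2$, $m=0$. Then $C_{\mathrm I}=\mathbb R_+e_0$. Since $-e_j=Aq^{(j)}$ with $q^{(j)}_0>0$, the vector $Ae_0$ is a positive multiple of $-e_j$. Hence $\Rec(L)=\mathbb R_+(-e_j)$, the other index $k$ is not in $I$ by \eqref{eq:recession1}, and $v_k=0$. So no recession direction of $L$ has all coordinates negative, and your argument would have to turn $v_k=0$ into a contradiction.

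The mechanism you offer for that does not exist. Here $L=\{y_j<g(y_k)\}$ with $g$ a concave quadratic polynomial, so on $S$ a bounded $y_k$ forces a bounded $y_j$. In any case, a path with $y_k$ bounded and the other coordinates tending to $-\infty$ approaches $\{z_i=0 \text{ for } i\neq k\}$ with $|z_k|$ bounded away from $0$, which says nothing about $\{z_k=0\}$. Points of $L$ going to $-\infty$ in both coordinates lie on curves such as $y_k=-N$, $y_j<g(-N)\approx -cN^2$, and these are not rays. Showing that such points exist and force $0\in\overline\Omega$ is the whole content of the lemma, so this step is circular at its crucial point.

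The step excluding positive coordinates of $v$ is a further unsupported detour. When $\Rec(L)$ is lower-dimensional you need rational points dense in its span. The paper gets a rational span only through Lemma~\ref{lem:spectrum-rigidity}, which needs an automorphism whose linear part fixes each $e_i$, $i\in I$. The dilations $\delta_s$ do not qualify when $q^{(j)}_0>0$, since $D_s$ scales $q_0$ by $s^2$, and you do not produce another automorphism that does.

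The missing idea is to run rays from points \emph{outside} $L$. Set $q=q^{(j)}\in C_{\mathrm I}$ with $q_0>0$, and $\Psi(x)=x_0-\sum_{\alpha\le m}e^{x_\alpha}-\sum_{\alpha>m}x_\alpha^2$. For every $x^*\in\mathbb R^n$ one has $\Psi(x^*+tq)\ge x^*_0+tq_0-\sum_{\alpha\le m}e^{x^*_\alpha}-\sum_{\alpha>m}(x^*_\alpha)^2\to+\infty$. Hence for every $y^*\in\mathbb R^n$ the ray $y^*-te_j=A(x^*+tq)+b$ lies in $L$ for all large $t$.

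The paper applies this with $y^*_k=\log|p_k|$ for $k\neq j$ to get $H_j\subset\overline\Omega$. It then rules out any $p\in H_j\cap\partial\Omega$: $\rho\le0$ on $H_j$ near $p$ kills $d\rho(p)$ along $H_j$, and rotation invariance in $z_j$ kills it in the $z_j$-plane.

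More directly, choosing $y^*=(-N,\ldots,-N)$ gives points of $\Omega$ within $\sqrt n\,e^{-N}$ of the origin, so $0\in\overline\Omega$. Your own closing step via Lemma~\ref{lem:origin-not-boundary} then gives $0\in\Omega$. Neither Bergman completeness, nor Lemma~\ref{lem:rational-ray}, nor any rationality argument is needed.
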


\begin{proof}
Fix \(j\in I\) such that
$q^{(j)}_0>0$. 
For simplicity, write $q=q^{(j)}.$
By \eqref{eq:qfilled}, \(q\in C_{\mathrm I}\).  
Consider a point
$p=(p_1,\ldots,p_n)\in H_j =\{z\in\mathbb C^n:z_j=0\}$
such that $p_k\neq0$ 
for $k\neq j$. 
Thus $\log|p_k|$ are finite.
Choose \(y^*\in\mathbb R^n\) by setting
$y_k^*=\log|p_k|$ for $ k\neq j$
and choosing \(y_j^*\in\mathbb R\) arbitrarily.  Define
$x^*:=A^{-1}(y^*-b).$
For \(t\geq0\), set
$y(t):=y^*-te_j.$
Using \(q=-A^{-1}e_j\), we obtain
\begin{align*}
A^{-1}(y(t)-b)
=A^{-1}(y^*-b)-tA^{-1}e_j=x^*+tq.
\end{align*}
Let $\Psi(x):=x_0-\sum_{\alpha=1}^{m}e^{x_\alpha}-\sum_{\alpha=m+1}^{n-1}x_\alpha^2$
so that $L_{\mathrm I}^{(m)}=\{x\in\mathbb R^n:\Psi(x)>0\}.$
Using \eqref{eq:q-Type-I-components}, we have
$e^{x_\alpha^*+tq_\alpha} \leq e^{x_\alpha^*}$ for
$ 1\leq\alpha\leq m$
and
$(x_\alpha^*+tq_\alpha)^2=(x_\alpha^*)^2$
for $ m<\alpha\leq n-1.$
Consequently,
\begin{align*}
\Psi(x^*+tq)
&=x_0^*+tq_0-\sum_{\alpha=1}^{m}e^{x_\alpha^*+tq_\alpha}-\sum_{\alpha=m+1}^{n-1} (x_\alpha^*+tq_\alpha)^2\\
&\geq x_0^*+tq_0-\sum_{\alpha=1}^{m}e^{x_\alpha^*}-\sum_{\alpha=m+1}^{n-1}(x_\alpha^*)^2.
\end{align*}
Since \(q_0>0\), the last expression tends to \(+\infty\) as
\(t\to+\infty\).  Hence there exists \(T>0\) such that
$\Psi(x^*+tq)>0$ for
$t\geq T.$
Therefore,
$x^*+tq\in L_{\mathrm I}^{(m)}$ for
$t\geq T$
Since \(y=Ax+b\) and
$L=A\bigl(L_{\mathrm I}^{(m)}\bigr)+b,$
it follows that
$y(t)=A(x^*+tq)+b\in L$ for 
$t\geq T$.
For \(k\neq j\), set
$z_k(t):=p_k,$
while for the \(j\)-th coordinate choose any fixed
\(\theta_j\in\mathbb R\) and set
$z_j(t):=e^{y_j^*-t+i\theta_j}.$
Then
$\log|z(t)|=y(t)\in L$ for 
$t\geq T$. 
Because \(\Omega\) is Reinhardt, it follows that
$z(t)\in\Omega$ for 
$t\geq T$ and 
$z(t)\rightarrow p$ 
as $t\to+\infty.$
This proves that
$p\in\overline\Omega$ 
whenever \(p\in H_j\) and \(p_k\neq0\) for every \(k\neq j\).
The set
$H_j^\circ:=\{p\in H_j:p_k\neq0\text{ for every }k\neq j\}$
is dense in \(H_j\).  Since \(\overline\Omega\) is closed and
\(H_j^\circ\subset\overline\Omega\), we conclude that
\begin{equation}\label{eq:hyperplane-in-closure}
H_j\subset\overline\Omega.
\end{equation}

Suppose, to the contrary, that there exists
$p\in H_j\cap\partial\Omega.$ 
Choose a smooth torus-invariant defining function \(\rho\) for
\(\Omega\) in a neighborhood \(U\) of \(p\), normalized so that
\[
\Omega\cap U=\{\rho<0\},
\quad
\partial\Omega\cap U=\{\rho=0\},
\quad
d\rho\neq0\quad\text{on }\partial\Omega\cap U.
\]
By \eqref{eq:hyperplane-in-closure},
$H_j\cap U\subset\overline\Omega.$
It follows that
$\rho\leq0$ on $H_j\cap U.$
Since \(p\in\partial\Omega\), we have \(\rho(p)=0\).  Hence the
restriction
$\rho|_{H_j\cap U}$
has a local maximum at \(p\).  Therefore,
$d\rho(p)[v]=0$
for every $v\in T_pH_j.$
It follows from the torus invariance that  $d\rho(p)=0.$ This is a contradiction. 
Consequently,
$H_j\cap\partial\Omega=\varnothing$ and thus 
$H_j\subset\Omega.$
In particular, 
$0\in\Omega.$
\end{proof}

By Theorem \ref{thm:origin-case} and Lemma~\ref{lem:vertical-origin},
\begin{equation}\label{eq:q0zero}
 q^{(j)}_0=0
\end{equation}
for $j\in I$. 
We now use a one-parameter family of affine automorphisms of the Type I
model.  For \(s>0\), define
\begin{equation}\label{eq:type-I-dilation}
\delta_s(x_0,x_1,\ldots,x_{n-1})
=
\bigl(
s^2x_0,\,
x_1+2\log s,\ldots,x_m+2\log s,\,
sx_{m+1},\ldots,sx_{n-1}
\bigr).
\end{equation}
Recall that
\[
L_{\mathrm I}^{(m)}
=
\left\{
x\in\mathbb R^n:
x_0>
\sum_{\alpha=1}^{m}e^{x_\alpha}
+
\sum_{\alpha=m+1}^{n-1}x_\alpha^2
\right\}.
\]
Under \(\delta_s\), the right-hand side of the defining inequality
transforms as
\begin{align*}
\sum_{\alpha=1}^{m}e^{x_\alpha+2\log s}+
\sum_{\alpha=m+1}^{n-1}(sx_\alpha)^2
=s^2\sum_{\alpha=1}^{m}e^{x_\alpha}+s^2\sum_{\alpha=m+1}^{n-1}x_\alpha^2
=s^2\left(\sum_{\alpha=1}^{m}e^{x_\alpha}+\sum_{\alpha=m+1}^{n-1}x_\alpha^2\right).
\end{align*}
Since \(s^2>0\), we have
$x_0>\sum_{\alpha=1}^{m}e^{x_\alpha}+
\sum_{\alpha=m+1}^{n-1}x_\alpha^2$
if and only if 
$s^2x_0 >\sum_{\alpha=1}^{m}e^{x_\alpha+2\log s}+\sum_{\alpha=m+1}^{n-1}(sx_\alpha)^2.$
Thus \(\delta_s\) preserves \(L_{\mathrm I}^{(m)}\).  Moreover,
\(\delta_{1/s}\) is its inverse, so \(\delta_s\) is an affine
automorphism of \(L_{\mathrm I}^{(m)}\).
Write
$\delta_s(x)=D_sx+a_s,$
where $D_s=\operatorname{diag}\bigl( s^2,I_m,sI_{n-1-m} \bigr)$
and
$a_s=\bigl( 0,2\log s,\ldots,2\log s,0,\ldots,0 \bigr)$, 
where the entries \(2\log s\) occur in positions \(1,\ldots,m\).
Since \(q^{(j)}\in C_{\mathrm I}\), $q^{(j)}_\alpha=0$ for
$m<\alpha\leq n-1.$
Together with \eqref{eq:q0zero}, this shows that
$q^{(j)} = \bigl( 0,q^{(j)}_1,\ldots,q^{(j)}_m,0,\ldots,0 \bigr).$
The matrix \(D_s\) acts as the identity on the coordinates
\(1,\ldots,m\), while the coordinates on which it acts by \(s^2\) or
\(s\) vanish in \(q^{(j)}\).  Therefore we get 
\begin{equation}\label{eq:Ds-fixes-qj}
D_sq^{(j)}=q^{(j)}
\end{equation}
for $j\in I$.
Let $\eta(x):=Ax+b$ so that $L=\eta\bigl(L_{\mathrm I}^{(m)}\bigr).$
Conjugating \(\delta_s\) by \(\eta\), define
$\phi_s:=\eta\circ\delta_s\circ\eta^{-1}.$
Then \(\phi_s\) is an affine automorphism of \(L\).  More explicitly,
\begin{align*}
\phi_s(y)=A\delta_s\bigl(A^{-1}(y-b)\bigr)+b =AD_sA^{-1}y +\bigl(Aa_s+b-AD_sA^{-1}b\bigr).
\end{align*}
Thus the linear part of \(\phi_s\) is
$B_s:=AD_sA^{-1}.$ 
For 
$j\in I$, 
since
$q^{(j)}=-A^{-1}e_j,$
it follows from \eqref{eq:Ds-fixes-qj} that 
\begin{align*}
B_se_j=AD_sA^{-1}e_j=-AD_sq^{(j)}=-Aq^{(j)}=e_j.
\end{align*}
The affine automorphism \(\phi_s\) and its linear part \(B_s\)
therefore satisfy the hypotheses of
Lemma~\ref{lem:toric-extension}.  Applying
Lemma~\ref{lem:spectrum-rigidity}, we obtain
\begin{equation}\label{eq:Bs-spectrum}
B_s^T(\mathcal P-\mathcal P) =\mathcal P-\mathcal P
\end{equation}
for $s>0$.
Fix \(d\in\mathcal P-\mathcal P\).  Since \(D_s\) depends
continuously on \(s\), so do \(B_s=AD_sA^{-1}\) and the map from
$(0,\infty)$ to $\mathbb R^n$ given by 
$s\mapsto B_s^Td.$
By \eqref{eq:Bs-spectrum},
$B_s^Td\in\mathcal P-\mathcal P$ for 
$s>0$. 
Since $\mathcal P-\mathcal P\subset\mathbb Z^n$ 
is a discrete subset of
\(\mathbb R^n\)
every continuous
map from \((0,\infty)\) into this discrete set is constant.
Consequently,
$B_s^Td=B_1^Td$ for 
$s>0.$ 
At \(s=1\), we have
$D_1=I$
and hence $B_1=AD_1A^{-1}=I.$
It follows that
\begin{equation}\label{eq:Bs-fixes-spectrum-pointwise}
B_s^Td=d
\end{equation}
for for every $d\in\mathcal P-\mathcal P$
 and every $s>0.$
By Lemma~\ref{lem:spectrum-spans},
 \eqref{eq:Bs-fixes-spectrum-pointwise} 
and linearity that $B_s^T=I,$ 
and hence 
$B_s=I$
for every \(s>0\).
On the other hand, 
$D_s=A^{-1}B_sA=I.$ 
This is impossible when \(s\neq1\), since
$D_s = \operatorname{diag}\bigl( s^2,I_m,sI_{n-1-m} \bigr)$
has first diagonal entry \(s^2\neq1\).  
This 
contradiction implies that the Type I model cannot occur.

\medskip

Recall  the recession cone of the Type II model
\[
L_{\mathrm{II}}
=
\left\{
x=(x_0,x_1,\ldots,x_{n-1})\in\mathbb R^n:
0<x_0<\pi,\quad
\sum_{\alpha=1}^{n-1}e^{x_\alpha}<\sin x_0
\right\}
\]
 is given by 
$C_{\mathrm{II}} :=\operatorname{Rec}(L_{\mathrm{II}})=\{0\}\times\mathbb R_-^{n-1}=\left\{q\in\mathbb R^n: q_0=0, q_\alpha\leq0\ \text{for }1\leq\alpha\leq n-1\right\}$
from the proof of Theorem \ref{bounded}.
It follows that 
\begin{equation}\label{eq:rec-L-Type-II}
\operatorname{Rec}(L)=A C_{\mathrm{II}}.
\end{equation}
Consider the affine reflection
$\sigma(x_0,x') = (\pi-x_0,x')$
with $x'=(x_1,\ldots,x_{n-1}).$
It is easy to verify that $\sigma(L_{\mathrm{II}})=L_{\mathrm{II}}.$
Moreover, \(\sigma^2=I\), so \(\sigma\) is an affine automorphism of
\(L_{\mathrm{II}}\).
The linear part of \(\sigma\) is
$R=\operatorname{diag}(-1,I_{n-1})$ and $Rq=q$ for $q\in C_{\mathrm{II}}$. 
Define $\phi(y):=A\sigma\bigl(A^{-1}(y-b)\bigr)+b.$ 
Then \(\phi\) is an affine automorphism of \(L\), and the linear part is
$B:=ARA^{-1}.$
By Lemma~\ref{lem:recession} and  \eqref{eq:rec-L-Type-II},
$-A^{-1}e_j\in C_{\mathrm{II}}.$ 
Setting $q^{(j)}:=-A^{-1}e_j$, then 
$Rq^{(j)}=q^{(j)}.$
Consequently,
\begin{align*}
Be_j =ARA^{-1}e_j=-ARq^{(j)}= -Aq^{(j)}=e_j.
\end{align*}
It follows from Lemma~\ref{lem:spectrum-rigidity}
that 
\begin{equation}\label{eq:B-reflection-spectrum}
B^T(\mathcal P-\mathcal P)
=
\mathcal P-\mathcal P.
\end{equation}
By
Lemma~\ref{lem:spectrum-spans},
$\operatorname{span}_{\mathbb R}(\mathcal P-\mathcal P) =\mathbb R^n.$
Hence we may choose linearly independent vectors
$d_1,\ldots,d_n\in\mathcal P-\mathcal P.$
Let $D:=[d_1\ \cdots\ d_n].$
Since the \(d_i\) are linearly independent, \(D\) is invertible.
Moreover,
$d_i\in\mathcal P-\mathcal P\subset\mathbb Z^n,$
so \(D\) is an integer matrix with nonzero integer determinant  and 
$D^{-1}=\frac{1}{\det D}\operatorname{adj}(D)$
shows that $D^{-1}\in M_n(\mathbb Q).$
By \eqref{eq:B-reflection-spectrum},
$B^Td_i\in\mathcal P-\mathcal P\subset\mathbb Z^n.$ 
Thus the matrix $ E:=[B^Td_1\ \cdots\ B^Td_n]$
also has integer entries.  Since
$B^TD=E,$ 
we obtain
$$B^T =ED^{-1} = [B^Td_1\ \cdots\ B^Td_n] [d_1\ \cdots\ d_n]^{-1} \in\operatorname{GL}(n,\mathbb Q).$$
In particular,
$B\in\operatorname{GL}(n,\mathbb Q).$
Let $H:=\ker(B-I).$
Since \(B=ARA^{-1}\), we have
\begin{align*}
H=\ker(ARA^{-1}-I)=A\ker(R-I).
\end{align*}
It follows that
$H = A\{q_0=0\}.$
On the other hand,
$\operatorname{span}(C_{\mathrm{II}}) =\{q\in\mathbb R^n:q_0=0\}.$
Using \eqref{eq:rec-L-Type-II}, we therefore obtain
\[ \operatorname{span}\bigl(\operatorname{Rec}(L)\bigr)=
A\operatorname{span}(C_{\mathrm{II}})=A\{q_0=0\}=H=\ker(B-I).
\]
Because \(B\) has rational entries, 
the kernel \(H\) 
admits a basis consisting of vectors in \(\mathbb Q^n\).  Consequently,
\begin{equation}\label{eq:rational-points-dense}
H\cap\mathbb Q^n
\quad\text{is dense in }H.
\end{equation}

We now show that \(L_{\mathrm{II}}\) is contained in the sum of a bounded
set and \(C_{\mathrm{II}}\).  If \(x\in L_{\mathrm{II}}\), then
$0<x_0<\pi.$
Furthermore,
$ e^{x_\alpha} < \sum_{\beta=1}^{n-1}e^{x_\beta} < \sin x_0 \leq1$ 
implies 
$x_\alpha<0$
for $1\leq\alpha\leq n-1$.
Writing $x_\alpha=\max\{x_\alpha,-1\}+\min\{x_\alpha+1,0\}$ and 
defining
$u(x):=\bigl(x_0, \max\{x_1,-1\}, \ldots, \max\{x_{n-1},-1\} \bigr), c(x) := \bigl(0, \min\{x_1+1,0\}, \ldots, \min\{x_{n-1}+1,0\} \bigr)$, 
then $x=u(x)+c(x).$
Moreover,
$ u(x)\in K:=[0,\pi]\times[-1,0]^{n-1},$
which is bounded, while
$ c(x)\in \{0\}\times\mathbb R_-^{n-1} = C_{\mathrm{II}}.$
Therefore,
$ L_{\mathrm{II}} \subset K+C_{\mathrm{II}}.$
Applying the affine map \(x\mapsto Ax+b\),  
we obtain
\begin{align}\label{eq:L-bounded-plus-recession}
L = A L_{\mathrm{II}}+b \subset (AK+b)+AC_{\mathrm{II}}= K'+\operatorname{Rec}(L),
\end{align}
where
$ K':=AK+b$
is bounded.

Suppose that $q_j\leq0$ for every vector \(q\in\operatorname{Rec}(L)\) and every $j$.  
Since \(K'\) is bounded, for every \(j\) there is a constant \(M_j\)
such that
$ k_j\leq M_j$ for any $k\in K'.$
If \(y\in L\), then by \eqref{eq:L-bounded-plus-recession} we can write
$y=k+q$
for some \(k\in K'\) and \(q\in\operatorname{Rec}(L)\).  Hence
$y_j=k_j+q_j\leq k_j\leq M_j.$
It would follow that, for every
\(z\in\Omega\cap(\mathbb C^*)^n\),
$ |z_j| = e^{\log|z_j|} \leq e^{M_j}$ for $ 1\leq j\leq n.$ 
The torus locus \(\Omega\cap(\mathbb C^*)^n\) is dense in \(\Omega\),
so the same estimates hold throughout \(\Omega\).  Therefore
 \(\Omega\) is bounded.  This is a contradiction.
Consequently, there exist
$q^{(0)}\in\operatorname{Rec}(L)$
and an index \(j\) such that
\begin{equation}\label{eq:positive-recession-coordinate}
q^{(0)}_j>0.
\end{equation}

Choose a point
$r$ in the relative interior $ \operatorname{relint}\bigl(\operatorname{Rec}(L)\bigr)$ of the convex cone $\operatorname{Rec}(L)$.
For \(0<\varepsilon<1\), set
$q_\varepsilon := (1-\varepsilon)q^{(0)}+\varepsilon r.$
A standard property of relative interiors of convex sets yields
$q_\varepsilon \in \operatorname{relint}\bigl(\operatorname{Rec}(L)\bigr)$ for $0<\varepsilon<1$. 
Moreover, by \eqref{eq:positive-recession-coordinate},
$ (q_\varepsilon)_j = (1-\varepsilon)q^{(0)}_j+\varepsilon r_j>0$
for all sufficiently small \(\varepsilon>0\).  Hence there exists
$ q^* \in  \operatorname{relint}\bigl(\operatorname{Rec}(L)\bigr)$
such that
$ q^*_j>0.$
Since
$ \operatorname{span}\bigl(\operatorname{Rec}(L)\bigr)=H,$
the relative interior $ \operatorname{relint}\bigl(\operatorname{Rec}(L)\bigr)$ of \(\operatorname{Rec}(L)\) is open in \(H\).
Thus there exists a relatively open neighborhood \(V\subset H\) of
\(q^*\) such that
$V\subset\operatorname{Rec}(L)$ and 
$q_j>0\quad(q\in V).$
By \eqref{eq:rational-points-dense}, we may therefore choose
$q\in V\cap\mathbb Q^n.$
Then
$ q\in\operatorname{Rec}(L)\cap\mathbb Q^n$ and 
$q_j>0.$
It now follows from 
Lemma~\ref{lem:rational-ray} that the Bergman metric $\omega_\Omega$ of
\(\Omega\) is incomplete.  This yields that the Type II
model cannot occur.

\medskip

The proof for Type III is similar to that of Type II and is simpler. 
For Type III,
\[
L_{\rm III}
=
\left\{x\in\mathbb R^n:
\sum_{\alpha=0}^{n-1}e^{x_\alpha}<1\right\},
\quad
\Rec(L_{\rm III})=-\mathbb R_{\geq0}^n.
\]
Thus $\Rec(L)=A(-\mathbb R_{\geq0}^n)$,
which is full-dimensional. Since every \(x_\alpha<0\), the decomposition
\[
x_\alpha=\max\{x_\alpha,-1\}+\min\{x_\alpha+1,0\}
\]
shows that \(L_{\rm III}\), and thus \(L\), lies in a bounded set
plus its recession cone.
If every \(q\in\Rec(L)\) had nonpositive components in the original
logarithmic \(y\)-coordinate system, then all coordinates of \(L\)
would be bounded above, forcing \(\Omega\) to be bounded.
Thus some \(q\in\Rec(L)\) has a positive coordinate. Perturbing
toward an interior point preserves this positivity and produces an interior
recession direction. Since \(\mathbb Q^n\) is dense in \(\mathbb R^n\), we
obtain
$q\in\Rec(L)\cap\mathbb Q^n$ with $q_j>0$ for some $j$. 
By Lemma~\ref{lem:rational-ray}, $\omega_\Omega$ is incomplete. Therefore Type III
is impossible.

Combining all three types, we thus prove that unbounded
smooth Reinhardt domain in complex dimension at least two cannot admit 
a well-defined complete Bergman-Einstein  metric.
\end{proof}

\section*{Acknowledgments}

The author would like to thank Siqi Fu for helpful discussions.
The author was partially supported by Zhejiang Provincial Natural Science
Foundation of China (Grant No.~LQKWL26A0201).

\end{document}